\documentclass[10pt]{amsart}
\usepackage{amsfonts}
\usepackage{array}
\usepackage{tabularx}
\usepackage{arydshln}
\usepackage{amsmath}

\usepackage{amssymb}
\usepackage{physics}
\usepackage{graphicx}
\usepackage{caption}
\usepackage{subcaption}
\usepackage{multirow}
\usepackage{footmisc}
\usepackage{xcolor}

\providecommand{\U}[1]{\protect\rule{.1in}{.1in}}

\usepackage{xcolor}
\usepackage{latexsym}
\usepackage{amsmath}
\usepackage{amssymb}
\usepackage{mathrsfs}
\usepackage{graphicx}
\usepackage{color}
\usepackage{pgfpages}
\usepackage{ifthen}
\usepackage{leftidx,tensor}
\usepackage[T1]{fontenc}
\usepackage[latin1]{inputenc}
\usepackage{mathtools}
\usepackage{comment}
\usepackage{dsfont}
\usepackage[nocompress]{cite}
\usepackage[shortlabels]{enumitem}
\usepackage{aliascnt}
\usepackage[bookmarks=true,pdfstartview=FitH, pdfborder={0 0 0}, colorlinks=true,citecolor=red, linkcolor=blue]{hyperref}
\usepackage{nicefrac}
\newtheorem{thm}{Theorem}[section]
\newaliascnt{cor}{thm}
\newaliascnt{prop}{thm}
\newaliascnt{lem}{thm}
\newtheorem{cor}[cor]{Corollary}
\newtheorem{prop}[prop]{Proposition}
\newtheorem{lem}[lem]{Lemma}
\aliascntresetthe{cor}
\aliascntresetthe{prop}
\aliascntresetthe{lem}

\newaliascnt{defn}{thm}
\newaliascnt{asu}{thm}
\newaliascnt{con}{thm}

\newtheorem{asu}[asu]{Assumption}

\aliascntresetthe{defn}
\aliascntresetthe{asu}
\aliascntresetthe{con}

\newcounter{stp}
\newcounter{stpi}
\newcounter{stpci}
\newcounter{stpiii}

\newaliascnt{rem}{thm}
\newaliascnt{exa}{thm}
\newaliascnt{masu}{thm}
\newaliascnt{nota}{thm}
\newaliascnt{sett}{thm}
\newtheorem{rem}[rem]{Remark}
\newtheorem{exa}[exa]{Example}

\aliascntresetthe{rem}
\aliascntresetthe{exa}
\aliascntresetthe{masu}
\aliascntresetthe{nota}
\aliascntresetthe{sett}
\numberwithin{equation}{section}

\setlist[enumerate]{font = \normalfont}

\newcommand {\R}	{\mathbb{R}}

\newcommand {\T}	{\mathbb{T}}
\newcommand {\mbbP}	{\mathbb{P}}

\renewcommand{\d}{\, \mathrm{d}}

\renewcommand{\H}{\mathrm{H}}

	\renewcommand{\phi}{\varphi}
	
	\renewcommand{\div}{\mathrm{div} \, }

	\newcommand{\rC}{\mathrm{C}}
	\newcommand{\rL}{\mathrm{L}}
	
	\newcommand{\rH}{\H}

       \newcommand{\cV}{\mathcal{V}}

\title[Continuous Data Assimilation for 2D NSEs from Partial Tangential Boundary Observations]{Continuous Data Assimilation for the 2D Navier-Stokes Equations from Partial Tangential Boundary Observations}

\author{Gianmarco Del Sarto}
\address{Technische Universit\"{a}t Darmstadt\\
Fachbereich Mathematik\\
	Schlossgartenstr.\ 7\\
	64289 Darmstadt\\
	Germany}
\email{delsarto@mathematik.tu-darmstadt.de}

\author{Buddhika Priyasad}
\address{Universit\"{a}t Konstanz\\
Fachbereich Mathematik und Statistik\\
	Universit\"atstr.\ 10\\
	78457 Konstanz\\
	Germany}
\email{priyasad@uni-konstanz.de}

\begin{document}
\keywords{Continuous data assimilation; two-dimensional Navier-Stokes equations; Navier-slip boundary conditions; partial tangential boundary observations; feedback-induced coercivity}

\subjclass[2020]{Primary 35Q30; Secondary 93C20, 35B40, 76D05, 93D15}


\begin{abstract}
We study continuous data assimilation for the two-dimensional Navier--Stokes
equations on a smooth, bounded, connected domain with Navier-slip boundary
conditions, using no interior observations. The available data consist only
of finite-dimensional measurements of the tangential velocity on a non-empty
relatively open subset $\Gamma\subset\partial\Omega$. We prove that
sufficiently strong boundary feedback, constructed from sufficiently fine
observations, generates a coercive spectral gap for the assimilation error.
The limiting gap is identified with that of a mixed-boundary problem obtained
by imposing a homogeneous Dirichlet condition on $\Gamma$, and is shown to be
of order $\nu$. Combining this feedback-induced coercivity with an estimate of
the non-linear error production in terms of the long-time averaged
symmetric-gradient energy of the reference solution, we obtain a sufficient
criterion for exponential synchronisation. We verify this criterion in the
unforced case, for sufficiently small forcing when the unnudged Navier-slip
form has an $\rL^2$ spectral gap, for sufficiently large viscosity on domains
without tangential rigid motions, and for sufficiently large viscosity in the
presence of positive boundary friction. We also treat perfect slip on domains
admitting tangential rigid motions, where synchronisation follows under a
smallness condition on the non-rigid solenoidal component of the forcing.
\end{abstract}

\maketitle

\section{Introduction and main results}
\subsection{Boundary data assimilation for the Navier-Stokes equations}
\noindent 
Data assimilation aims to recover the state of an evolution equation from partial observations by coupling the underlying model dynamics with a feedback term determined by the discrepancy between the observed data and the corresponding model predictions \cite{Bouttier,Stuart2015}. This problem is particularly important in geophysical applications, where the available observations are typically sparse relative to the dimension and complexity of the underlying atmospheric or oceanic models \cite{Ghil,Kalnay2003AtmosphericDA,Carrassi}.

In this work, we study data assimilation for the two-dimensional Navier-Stokes equations (2D NSEs) with Navier-slip boundary conditions, assuming that observations are available \emph{only} on a subset of the boundary. Let $\Omega\subset\mathbb{R}^2$ be a smooth, bounded, and connected domain, and let $\Gamma\subset\partial\Omega$ be a non-empty relatively open subset corresponding to the observed part of the boundary. The reference velocity is governed by the system
\begin{equation}
\left\{
\begin{aligned}
\partial_t u-\nu\Delta u+(u\cdot\nabla)u+\nabla p&=g,
&&\text{in }\Omega,\\
\nabla\cdot u&=0,
&&\text{in }\Omega,\\
u\cdot n&=0,
&&\text{on }\partial\Omega,\\
\bigl(2\nu D(u)n\bigr)_\tau+\alpha u_\tau&=0,
&&\text{on }\partial\Omega,\\
u(0)&=u_0,
\end{aligned}
\right.
\label{eq: 2D nse}
\end{equation}
Here, $\nu >0$ denotes the viscosity, $p$ is the pressure, $g $ is a time-independent force, and $u_0$ is the initial condition. Moreover, $n$ denotes the outward unit normal to $\partial \Omega$ and $D(u) := ( \nabla u + \nabla u^T)/2$ is the symmetric gradient. Lastly, denoting by $\tau:= n^\perp$ the unit tangent to $\partial \Omega$, then $u_\tau := u \cdot \tau $ is the scalar tangential velocity component on the boundary, and $\alpha \geq 0$ is the boundary-friction coefficient. Well-posedness results for the 2D NSEs with Navier boundary conditions as in \eqref{eq: 2D nse} can be found in \cite{Kelliher2006}.

The boundary condition $u \cdot n = 0$ in \eqref{eq: 2D nse} is the impermeability condition: it prevents the fluid from crossing the boundary. Differently from the no-slip condition, it does not require the tangential velocity $u_\tau$ to vanish. The second boundary condition in \eqref{eq: 2D nse}$_4$ describes the tangential interaction between the fluid and the boundary. The
vector $2\nu D(u)n$ is the viscous stress, and its tangential component
measures the shear stress applied on the boundary. Thus, \eqref{eq: 2D nse}$_4$ means that the tangential stress opposes the motion of the fluid and is
proportional to its tangential velocity. When $\alpha=0$, the tangential stress vanishes and the fluid can slide freely
along the boundary. When $\alpha>0$, the boundary resists this motion and adds
the dissipation $\alpha\|u_\tau\|_{\rL^2(\partial\Omega)}^2$ to the energy
balance. Larger values of $\alpha$ therefore produce stronger tangential
friction.

In continuous time, data assimilation is often implemented through \emph{nudging}: an auxiliary state evolves according to the same governing equations as the reference solution and is continuously corrected by a feedback term based on the discrepancy between the measured quantities and their predictions \cite{AOT2014}. Under suitable conditions on the observations and the feedback strength, this mechanism drives the auxiliary state toward the reference trajectory. We adapt this principle to the setting in which the available measurements are confined to a portion of the boundary. The restriction to boundary observations is motivated by the fact that, in many fluid-mechanical applications (for instance oceanography), measurements within the interior of the flow are difficult or impossible to obtain, whereas data collected on accessible portions of the boundary are considerably more realistic from an experimental and operational point of view  \cite{Sonnewald_2021}.

More precisely, we assume that measurements are available on
$\Gamma\subset\partial\Omega$. Since the normal velocity is already prescribed
by the impermeability condition, we take the measured boundary quantity to be
the tangential velocity. We therefore introduce the trace
$
T_\Gamma u:=u_\tau|_\Gamma$
and consider the measurements 
$$
y_\delta(t):=P_\delta T_\Gamma u(t),
$$
where $P_\delta:\rL^2(\Gamma)\to Y_\delta $ is a finite-dimensional observation operator. The parameter $\delta>0$ represents the observation resolution, with smaller values of $\delta$ corresponding to finer measurements.
The assimilated velocity $v$ is defined as the solution of the boundary-feedback system 
\begin{equation}
\left\{
\begin{aligned}
\partial_t v-\nu\Delta v+(v\cdot\nabla)v+\nabla q&=g,
&&\text{in }\Omega,\\
\nabla\cdot v&=0,
&&\text{in }\Omega,\\
v\cdot n&=0,
&&\text{on }\partial\Omega,\\
\bigl(2\nu D(v)n\bigr)_\tau+\alpha v_\tau
+\mu P_\delta^*\bigl(P_\delta T_\Gamma v-y_\delta\bigr)&=0,
&&\text{on }\Gamma,\\
\bigl(2\nu D(v)n\bigr)_\tau+\alpha v_\tau&=0,
&&\text{on }\partial\Omega\setminus\Gamma,\\
v(0)&=v_0.
\end{aligned}
\right.
\label{eq: da_model}
\end{equation}
The boundary conditions in \eqref{eq: da_model}, and the corresponding error
boundary conditions below, are understood through the variational formulations;
no separate classical stress trace is asserted for weak solutions. Here, $q$ denotes the pressure, $v_0$ is an arbitrary initial condition, and $\mu >0$ is the feedback strength. On the unobserved part $\partial \Omega\setminus \Gamma$, the velocity $v$ satisfies the same Navier-slip condition as $u$. On the observed part $\Gamma$, this condition is modified by the feedback term 
$$
\mu P_\delta^*\bigl(P_\delta T_\Gamma v-y_\delta\bigr).$$
The quantity $P_\delta T_\Gamma v-y_\delta\in Y_\delta$ is the difference between the observations predicted by $v$ and the measured observations of $u$ given by $y_{\delta}$. The adjoint operator $P_\delta^*$ maps this difference back to a force acting on the boundary. Thus, the feedback continuously corrects the tangential velocity of $v$ on $\Gamma$, with larger values of $\mu$ producing a stronger correction.

Our aim is to investigate the data assimilation error $$w:=v-u.$$
The objective is to prove exponential synchronisation of the assimilated solution $v$ with the reference solution $u$ in the energy norm. More precisely, we look for sufficient conditions on the feedback strength $\mu$, the observation resolution $\delta $, and the reference solution $u$ under which there exist constants $C,\gamma>0$, independent of time, such that
$$
\|w(t)\|_{\rL^2(\Omega)}
\leq C e^{-\gamma t}\|v_0-u_0\|_{\rL^2(\Omega)} \qquad \text{for all }t\geq0.$$
This estimate shows that the assimilation error converges to zero exponentially and that $v$ recovers the reference trajectory for any initial condition $v_0$ of the data assimilated system.

\subsection{Related works and main contribution}

The classical background for continuous data assimilation is the
Foias-Prodi theory of determining modes and its later extensions to
determining nodes, and volume elements \cite{FoiasProdi1967,JonesTiti1992}. These results show that the long-time
dynamics of the 2D NSEs can be determined by
finitely many degrees of freedom. The continuous data-assimilation algorithm by Azouani, Olson, and Titi (AOT-algorithm)
turns this principle into a continuous-in-time reconstruction method by
inserting the discrepancy between observed and predicted coarse data as a
feedback term, see \cite{AOT2014}. In the standard setting, the observations are
interior modes, nodal values, or local averages, and the associated
interpolant approximates the state inside the fluid domain. The AOT algorithm has been analysed, possibly with extensions, for many PDEs models, such as the 3D NSEs \cite{Biswas}, the 3D Ladyzhenskaya model \cite{Cao2022}, the 3D primitive equations \cite{Pei2019}, reaction-diffusion equations \cite{Larios}, and abstract evolution equations \cite{CDA}. At the same time, a large literature also concerns the continuous data assimilation with noisy observations, see \cite{Bessaih2015,Brocker_DelSarto_26} and the references therein.

Boundary effects also arise in related, but structurally different, settings. Wu, Dong, and Wang \cite{WuDongWang2025} consider a regularised 2D Navier--Stokes problem on a bounded domain whose boundary is divided into a no-slip portion and a portion subject to a non-linear friction-type slip law. Their continuous data-assimilation scheme nevertheless uses a general interpolant $I_{\hat h}$ of the velocity in the fluid domain and introduces the discrepancy
$
\mu\bigl(I_{\hat h}u^\varepsilon-I_{\hat h}v^\varepsilon\bigr)
$
as a distributed forcing in the momentum equation. They establish well-posedness and exponential convergence in $\rL^2(\Omega)$ for sufficiently strong feedback and sufficiently fine observations, and also study the recovery of an unknown viscosity and several numerical variants. Thus, in that work the slip condition belongs to the physical model, whereas neither the observations nor the nudging term are supported on the boundary.

Boundary feedback stabilisation provides another related, but structurally distinct, line of work. The general boundary-feedback theory for parabolic equations goes back at least to \cite{Triggiani1980Boundary}. For the Navier--Stokes equations, tangential boundary feedback stabilisation near unstable equilibria was developed in 
\cite{BarbuLasieckaTriggiani2006Memoir,BarbuLasieckaTriggiani2006Abstract}. Finite-dimensional localised tangential boundary controls, in combination
with localised interior controls, were subsequently used for the Oseen
equation \cite{LasieckaTriggiani2015Oseen} and the nonlinear
Navier-Stokes equations \cite{LasieckaTriggiani2015NSE}. In
\cite{LasieckaPriyasadTriggiani2021}, uniform stabilisation of the 3D
Navier-Stokes equations near an unstable equilibrium was obtained in
low-regularity Besov spaces by means of finite-dimensional localised
tangential-like boundary feedback, coupled with a finite-dimensional localised interior control. For non-stationary target trajectories,
\cite{Rodrigues2015} constructs a finite-dimensional Dirichlet boundary
feedback supported on a prescribed open subset $\Gamma_{\rm c}\subset\partial\Omega$.

These results concern state-feedback stabilisation: the target equilibrium or trajectory is prescribed in advance, and the controller is determined from the state perturbation relative to that target. In particular, the boundary observability inequalities entering such constructions are control-theoretic design tools and do not represent a stream of partial boundary measurements. Boundary traces also occur in inverse-problem results, for instance in stability and reconstruction estimates based on measurements of the velocity
and the Cauchy stress \cite{Badra2016}, while available Navier-Stokes observer constructions use interior nodal or averaged observations rather than boundary traces \cite{Sergiy_2023}.

The present work addresses a different problem. The available data consist exclusively of the finite-dimensional tangential observations $P_\delta T_\Gamma u$ on an open subset $\Gamma \subset \partial\Omega$, and the adjoint operator $P_\delta^*$ lifts the observational residual back into the Navier-slip law on the same boundary subset. To the best of our knowledge, no previous rigorous result establishes exponential synchronisation for the 2D NSEs under this precise combination of boundary-only observations, finite-dimensional tangential measurements, localisation to a boundary subset, and feedback acting directly through the boundary condition. The perfect-slip case considered below also exhibits a regime in which the boundary feedback is essential: it damps tangential rigid motions that are invisible to the natural viscous dissipation.

\subsection{Main results}
\noindent 
Let $H$ and $V$ denote the usual divergence-free subspaces of
$\rL^2(\Omega;\mathbb{R}^2)$ and $\rH^1(\Omega;\mathbb{R}^2)$, respectively, incorporating
the impermeability condition $z\cdot n=0$ on $\partial\Omega$. For $z\in V$, we denote by
$
T_\Gamma z:=z_\tau|_{\Gamma}
$
its tangential trace on the observed boundary portion $\Gamma$.

The finite-dimensional boundary observations are assumed to become increasingly
resolved as $\delta\to0$ in the following sense.
\begin{asu}[Dense boundary observations]
\label{cond:dense_boundary_observations}
For every $\delta>0$, let $Y_\delta$ be a finite-dimensional Hilbert space and
let
$$
P_\delta : \rL^2(\Gamma)\to Y_\delta
$$
be bounded. We denote by
$
P_\delta^*:Y_\delta\to\rL^2(\Gamma)
$
the adjoint of $P_\delta$. We assume that there is a constant $C_P>0$, independent of
$\delta$, such that
$$
\| P_\delta f \|_{Y_\delta} \leq C_P \| f \|_{ \rL^2 ( \Gamma )}
\qquad \text{for all } f \in  \rL^2( \Gamma),
$$
and that for $\delta\to0$
\begin{equation*}
\| P_\delta f\|_{ Y_ \delta}^2
\to \|f\|_{\rL^2(\Gamma)}^2 \qquad \text{for every } f \in \rL^2(\Gamma).
\end{equation*}
\end{asu}
\noindent 
Let $w=v-u$ be the assimilation error. Subtracting the reference model \eqref{eq: 2D nse} and
assimilated system \eqref{eq: da_model} and testing by $w$, we obtain the error energy balance
\begin{equation}
    \frac12\frac{\d}{\d t}
\|w(t)\|_{\rL^2(\Omega)}^2 + a_{\mu,\delta}(w(t),w(t))
= -\int_\Omega (w\cdot\nabla)u\cdot w \d x,
\label{eq: energy balance intro}
\end{equation}
where
$$
a_{\mu,\delta}(z,z)
:=
2\nu\|D(z)\|_{\rL^2(\Omega)}^2
+
\alpha\|z_\tau\|_{\rL^2(\partial\Omega)}^2
+
\mu\|P_\delta T_\Gamma z\|_{Y_\delta}^2.
$$
Thus $a_{\mu,\delta}$ collects the three dissipative mechanisms acting on the
error: viscosity, Navier boundary friction, and boundary feedback.

The first
main step of our work is to show that sufficiently strong and sufficiently
resolved feedback makes this quadratic form coercive in the full
$\rL^2(\Omega)$ norm. The importance of this coercivity is seen directly from the error energy balance \eqref{eq: energy balance intro}. If, for some $\kappa>0$,
$$
a_{\mu,\delta}(z,z) \geq \kappa\|z\|_{\rL^2(\Omega)}^2 \qquad\text{for every }z\in V,
$$
then the linear part of the error equation produces damping at rate $\kappa$. On the other hand, it can be checked that the non-linear term can be bounded in terms of the symmetric gradient of the reference solution, leading to a differential inequality of the form
$$ \frac{\d}{\d t}\|w(t)\|_{\rL^2(\Omega)}^2 + \left(\kappa-
C_{\kappa,\nu}\|D(u(t))\|_{\rL^2(\Omega)}^2 \right) \|w(t)\|_{\rL^2(\Omega)}^2 \leq0. $$
Consequently, if the feedback-induced gap $\kappa$ dominates the long-time average of the non-linear contribution, an averaged Gronwall argument yields the exponential decay of $w$ and hence synchronisation.

To describe the maximal spectral gap that can be produced by observations on
$\Gamma$, set
$$
a_\alpha(z,z)
:=
2\nu\|D(z)\|_{\rL^2(\Omega)}^2
+
\alpha\|z_\tau\|_{\rL^2(\partial\Omega)}^2, \qquad 
\kappa_{\infty,\Gamma}
:=
\inf_{\substack{z\in V\setminus\{0\} \\ T_\Gamma z=0}}
\frac{a_\alpha(z,z)}
{\|z\|_{\rL^2(\Omega)}^2}.
$$
Since every $z\in V$ already satisfies $z\cdot n=0$ on $\partial\Omega$, the
constraint $T_\Gamma z=0$ forces the full trace of $z$ to vanish on $\Gamma$. The quantity $\kappa_{\infty,\Gamma}$ is therefore the spectral gap of a
mixed-boundary problem with a homogeneous Dirichlet condition on $\Gamma$ and
the original Navier-slip condition on
$\partial\Omega\setminus\Gamma$.

The following result, whose proof is divided between \autoref{lem:small_viscosity_kappa_scaling} and \autoref{prop:feedback_induced_coercivity}, makes this mechanism
quantitative: it shows that the limiting mixed-boundary problem has a strictly
positive spectral gap of order $\nu$, and that every smaller gap can be
transferred to the actual finite-dimensional feedback by choosing the feedback
strength sufficiently large and the observations sufficiently resolved.

\begin{thm}[Feedback-induced coercivity]
\label{thm:main_boundary_feedback_spectral_gaps}
Let $\Gamma\subset\partial\Omega$ be non-empty and relatively open, and assume
\autoref{cond:dense_boundary_observations}. Then there exist positive constants
$c_\Gamma = c_\Gamma (\Omega, \Gamma),\, C_\Gamma =  C_\Gamma (\Omega, \Gamma)$ such that
$$
c_\Gamma\nu
\leq
\kappa_{\infty,\Gamma}
\leq
C_\Gamma\nu
$$
for every $\nu>0$ and every $\alpha\geq0$. Moreover, for every
$
0<\kappa<\kappa_{\infty,\Gamma},
$ there exist $\mu_0>0$ and $\delta_0>0$ such that, for every $\mu\geq\mu_0$ and every
$0<\delta\leq\delta_0$,
$$
a_{\mu,\delta}(z,z)
\geq
\kappa\|z\|_{\rL^2(\Omega)}^2
\qquad
\text{for every }z\in V.
$$
\end{thm}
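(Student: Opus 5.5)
The proof splits into two parts: the two-sided bound on $\kappa_{\infty,\Gamma}$, and the transfer of any smaller gap to the finite-dimensional feedback form $a_{\mu,\delta}$.

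\emph{Bounds on $\kappa_{\infty,\Gamma}$.} Set $V_\Gamma:=\{z\in V: T_\Gamma z=0\}$. Since $z\cdot n=0$, every $z\in V_\Gamma$ has vanishing full trace on $\Gamma$.

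For the upper bound, fix one nonzero $z_*\in V$ compactly supported in $\Omega$, for instance $z_*=\nabla^\perp\psi$ with $\psi\in \mathrm{C}_c^\infty(\Omega)$. Then $z_*\in V_\Gamma$, the boundary friction term vanishes, and
\[
\kappa_{\infty,\Gamma}\le 2\nu\|D(z_*)\|^2_{\rL^2}/\|z_*\|^2_{\rL^2}=:C_\Gamma\nu,
\]
uniformly in $\alpha$.

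For the lower bound, drop the $\alpha$ term, which is nonnegative. It then suffices to show
\[
\|D(z)\|_{\rL^2}^2\ge c\|z\|^2_{\rL^2}\quad\text{on } V_\Gamma,
\]
with $c=c(\Omega,\Gamma)$. I argue by contradiction using a Korn inequality modulo rigid motions:
\[
\|z-Rz\|_{\rH^1}\le C\|D(z)\|_{\rL^2},
\]
where $R$ is the projection onto rigid motions $a+b x^\perp$. Take a normalised sequence $z_k$ with $D(z_k)\to0$. Korn and Rellich give a subsequence converging in $\rH^1$ to a rigid motion $r$ with $\|r\|_{\rL^2}=1$. The trace of $r$ vanishes on $\Gamma$, since traces pass to the limit. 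A nonzero rigid motion $a+bx^\perp$ vanishes at most at one point, while $\Gamma$ is relatively open and hence contains infinitely many points. So $r=0$, which is a contradiction. The scaling in $\nu$ is explicit, so $c_\Gamma=2c$.

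\emph{Feedback-induced coercivity.} Fix $0<\kappa<\kappa_{\infty,\Gamma}$ and suppose the claim fails. Then there are $\mu_k\to\infty$, $\delta_k\to0$ and $z_k\in V$ with $\|z_k\|_{\rL^2}=1$ and $a_{\mu_k,\delta_k}(z_k,z_k)<\kappa$. The monotonicity in $\mu$ is trivial, but monotonicity in $\delta$ is not given, so the contradiction has to be run over pairs $(\mu_k,\delta_k)$. I expect this to be harmless: negating "exists $\mu_0,\delta_0$ for all $\mu\ge\mu_0$, $\delta\le\delta_0$" yields exactly such sequences.

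Each step of the compactness argument goes as follows.
\begin{enumerate}
\item Bound $\|D(z_k)\|$ by $\kappa/(2\nu)$. Korn modulo rigid motions, together with finite-dimensionality of the rigid-motion space and $\|z_k\|_{\rL^2}=1$, gives a uniform $\rH^1$ bound.
\item Pass to a subsequence with $z_k\rightharpoonup z$ in $\rH^1$, $z_k\to z$ in $\rL^2$, and $T_\Gamma z_k\to T_\Gamma z$ in $\rL^2(\Gamma)$ by compactness of the trace $\rH^1\to\rL^2(\partial\Omega)$. In particular $\|z\|_{\rL^2}=1$.
\item Show $T_\Gamma z=0$. From $\mu_k\|P_{\delta_k}T_\Gamma z_k\|^2<\kappa$ we get $\|P_{\delta_k}T_\Gamma z_k\|\to0$. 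Then
\[
\|P_{\delta_k}T_\Gamma z\|\le\|P_{\delta_k}T_\Gamma z_k\|+C_P\|T_\Gamma(z_k-z)\|\to0,
\]
using the uniform bound $C_P$. By the pointwise convergence in \autoref{cond:dense_boundary_observations}, $\|P_{\delta_k}T_\Gamma z\|\to\|T_\Gamma z\|$, so $T_\Gamma z=0$.
\item By weak lower semicontinuity of $\|D(\cdot)\|^2$ and strong convergence of the traces, $a_\alpha(z,z)\le\liminf a_\alpha(z_k,z_k)\le\kappa<\kappa_{\infty,\Gamma}$.
\end{enumerate}
Step 4 produces $z\in V_\Gamma$ with $\|z\|_{\rL^2}=1$ and Rayleigh quotient below $\kappa_{\infty,\Gamma}$, contradicting the definition of $\kappa_{\infty,\Gamma}$.

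The main obstacle is step 3: the feedback penalty only controls the projections $P_{\delta_k}$, not the trace itself. It is overcome by combining the uniform bound $C_P$ with strong trace convergence, and then applying the pointwise convergence of $\|P_\delta f\|$ at the single fixed function $f=T_\Gamma z$.
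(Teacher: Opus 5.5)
Your proposal is correct. For the two-sided bound on $\kappa_{\infty,\Gamma}$ you follow essentially the paper's argument. The upper bound uses the same test field $\nabla^\perp\psi$. The lower bound uses a Korn--Poincar\'e estimate on $V_\Gamma$, proved by contradiction with the same rigidity step. \autoref{lem:korn_poincare_partial_dirichlet} normalises in $\rH^1$ and obtains strong convergence by applying Korn's second inequality to $z_j-z_k$, whereas you normalise in $\rL^2$ and use Korn modulo rigid motions; the difference is cosmetic. One small slip: it is $\|D(z_k)\|_{\rL^2}^2$, not $\|D(z_k)\|_{\rL^2}$, that is bounded by $\kappa/(2\nu)$.

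The coercivity part is where you genuinely differ from the paper. The paper passes through the intermediate full-trace gap $\kappa_{\mu,\Gamma}$ in three steps:
\begin{itemize}
\item it shows $\kappa_{\mu,\Gamma}\uparrow\kappa_{\infty,\Gamma}$ in \autoref{lem:continuous_boundary_penalization_limit};
\item it shows $\kappa_{\mu,\delta}\to\kappa_{\mu,\Gamma}$ as $\delta\to0$ at fixed $\mu$, proving both the $\liminf$ and $\limsup$ inequalities, in \autoref{lem:discrete_gap_convergence};
\item it fixes $\mu_0$ with $\kappa_{\mu_0,\Gamma}>\kappa$, chooses $\delta_0$ for that $\mu_0$, and extends to all $\mu\geq\mu_0$ by monotonicity of $a_{\mu,\delta}$ in $\mu$.
\end{itemize}
You instead run a single compactness argument along a joint sequence $(\mu_k,\delta_k)\to(\infty,0)$, which is a correct negation of the statement. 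Your Step 3 is the same mechanism as the paper's auxiliary observation that $\|P_{\delta_j}f_j\|_{Y_{\delta_j}}\to\|f\|_{\rL^2(\Gamma)}$ whenever $f_j\to f$ in $\rL^2(\Gamma)$: you use the $C_P$ triangle inequality and then pointwise convergence at the single function $T_\Gamma z$.

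Both routes end in the same joint statement. Yours is shorter and self-contained: it never introduces $\kappa_{\mu,\Gamma}$, never needs the $\limsup$ direction, and does not use monotonicity in $\mu$. In exchange, the paper's decomposition yields information your argument does not give:
\begin{itemize}
\item the identification $\lim_{\delta\to0^+}\kappa_{\mu,\delta}=\kappa_{\mu,\Gamma}$ of the gap attainable at a fixed feedback strength;
\item the monotone convergence of $\kappa_{\mu,\Gamma}$ and the iterated-limit characterisation of $\kappa_{\infty,\Gamma}$ in the subsequent remark;
\item an explicit design order in which the gain is fixed first and the resolution is then chosen for it.
\end{itemize}
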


\noindent 
It remains to compare the feedback-induced damping with the non-linear term in
the error energy balance in \eqref{eq: energy balance intro}. Since pointwise,
$
(w\cdot\nabla)u\cdot w
=
w^T D(u) w,
$
then the non-linear interaction in \eqref{eq: energy balance intro} is governed by the symmetric gradient $D(u)$. We therefore introduce the
long-time averaged symmetric-gradient energy
\begin{equation}
M_u^D
:=
\limsup_{t\to\infty}
\frac1t
\int_0^t
\|D(u(s))\|_{\rL^2(\Omega)}^2\,\d s.
\label{eq:def_MuD}
\end{equation}
For $\kappa>0$ and $\nu>0$, set
\begin{equation}
\Theta_{\kappa,\nu}
:=
C_4^2 C_{\mathrm{Korn}}
\left(
\kappa^{-1}+(2\nu)^{-1}
\right),
\label{eq:def_theta_kappa_nu_intro}
\end{equation}
where $C_4$ and $C_{\mathrm{Korn}}$ are the constants in the Ladyzhenskaya and
Korn inequalities, see \eqref{eq:ladyzhenskaya} and
\eqref{eq:korn_second_squared} respectively.

We are now ready to present the main convergence result of our work. It states that the feedback-induced coercivity gives exponential synchronisation whenever the resulting damping dominates the long-time averaged non-linear contribution of the reference flow, i.e. when $\Theta_{\kappa,\nu}M_u^D<\kappa$ for some $0<\kappa<\kappa_{\infty,\Gamma}$.
\begin{thm}[Exponential synchronisation under averaged symmetric-gradient control]
\label{thm:synchronisation}
Assume \autoref{cond:dense_boundary_observations}. Let $u$ be the solution of
\eqref{eq: 2D nse} with $u_0 \in H$, and assume that $M_u^D<\infty$. Let
$$
y_{\delta}(t) = P_{\delta} T_\Gamma u(t).
$$
For any $v_0 \in H$, let $v$ be the solution of the assimilated system in
\eqref{eq: da_model}. Suppose that there exists $\kappa$ such that
\begin{equation}
0<\kappa<\kappa_{\infty,\Gamma}
\qquad \text{and}\qquad
\Theta_{\kappa,\nu}M_u^D<\kappa.
\label{eq:non-linear_gap_condition}
\end{equation}
Then there exist $\mu_0>0$ and $\delta_0>0$ such that the following holds. For every
$\mu \geq\mu_0$, every $0 < \delta \leq \delta_0$,  and every $ 0< \gamma < \frac{1}{2}( \kappa- \Theta_{\kappa, \nu} M_u^D), $ there exists a constant $C>0$ such that

$$
\|v(t)-u(t)\|_{\rL^2(\Omega)}
\leq
C e^{-\gamma t}\| v_0-u_0 \|_{ \rL^2(\Omega)}
\qquad \text{for all } t \geq 0.
$$
The constant $C$ is independent of $t,v_0$, and of the particular admissible values of $\mu$ and $\delta$.
\end{thm}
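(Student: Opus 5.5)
The plan is to combine \autoref{thm:main_boundary_feedback_spectral_gaps} with an estimate of the non-linear term and an averaged Gronwall argument. First fix $\kappa$ as in \eqref{eq:non-linear_gap_condition}. \autoref{thm:main_boundary_feedback_spectral_gaps} then provides $\mu_0,\delta_0$ such that, for all $\mu\geq\mu_0$ and $0<\delta\leq\delta_0$,
$$a_{\mu,\delta}(z,z)\geq\kappa\|z\|_{\rL^2(\Omega)}^2 \qquad\text{for all } z\in V.$$
The thresholds depend only on $\kappa$, so every later constant is uniform in the admissible $\mu,\delta$. Next I would justify the error energy balance \eqref{eq: energy balance intro} rigorously for weak solutions. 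The error $w=v-u$ lies in $\rL^2_{\mathrm{loc}}(0,\infty;V)$ with $\partial_t w\in\rL^2_{\mathrm{loc}}(0,\infty;V')$. The trilinear cancellations $b(v,w,w)=0$ hold by impermeability. The feedback term contributes $\mu\|P_\delta T_\Gamma w\|_{Y_\delta}^2$, since $y_\delta=P_\delta T_\Gamma u$. A Lions--Magenes argument then gives the equality for a.e. $t$.

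Then comes the non-linear estimate, using $(w\cdot\nabla)u\cdot w=w^TD(u)w$ pointwise. We have
$$\Bigl|\int_\Omega w^TD(u)w\Bigr|\leq \|D(u)\|_{\rL^2}\|w\|_{\rL^4}^2\leq C_4^2\|D(u)\|_{\rL^2}\|w\|_{\rL^2}\|w\|_{\rH^1}.$$
Korn's inequality in the form \eqref{eq:korn_second_squared} gives $\|w\|_{\rH^1}^2\leq C_{\mathrm{Korn}}(\|D(w)\|_{\rL^2}^2+\|w\|_{\rL^2}^2)$. I will use this by splitting the dissipation: half of $a_{\mu,\delta}(w,w)$ is bounded below by $\tfrac{\kappa}{2}\|w\|^2$, and from $2\nu\|D(w)\|^2$ and the $\kappa\|w\|^2$ coercivity one extracts a multiple of $\|w\|_{\rH^1}^2$. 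Young's inequality then absorbs $\|w\|_{\rH^1}$. The aim is to arrange the weights so that the result reads
$$\frac{\d}{\d t}\|w\|^2+\bigl(\kappa-\Theta_{\kappa,\nu}\|D(u)\|^2\bigr)\|w\|^2\leq 0,$$
with exactly $\Theta_{\kappa,\nu}=C_4^2C_{\mathrm{Korn}}(\kappa^{-1}+(2\nu)^{-1})$. The sharp bookkeeping of constants is the main obstacle. I would write
$$a_{\mu,\delta}(w,w)\geq \tfrac12\kappa\|w\|^2+\tfrac12\bigl(2\nu\|D(w)\|^2+\kappa\|w\|^2\bigr)\cdot\theta$$
and optimise $\theta$. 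Alternatively, I would bound $\|w\|_{\rH^1}^2\leq C_{\mathrm{Korn}}(\|D(w)\|^2+\|w\|^2)$ and apply Young separately to the two pieces: absorb the $D(w)$ part into $\nu\|D(w)\|^2$, giving the $(2\nu)^{-1}$ term, and absorb the $\|w\|$ part into the $\kappa\|w\|^2$ term, giving the $\kappa^{-1}$ term. A slightly worse constant would still yield the qualitative result, but I would aim to match \eqref{eq:def_theta_kappa_nu_intro}.

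Finally, integrate the differential inequality:
$$\|w(t)\|^2\leq\|w(0)\|^2\exp\Bigl(-\kappa t+\Theta_{\kappa,\nu}\int_0^t\|D(u(s))\|^2\,\d s\Bigr).$$
Fix $\gamma<\tfrac12(\kappa-\Theta_{\kappa,\nu}M_u^D)$ and choose $\varepsilon>0$ with $2\gamma=\kappa-\Theta_{\kappa,\nu}(M_u^D+\varepsilon)$. By the definition of the limsup \eqref{eq:def_MuD}, there is $T$ with $\int_0^t\|D(u)\|^2\leq (M_u^D+\varepsilon)t$ for $t\geq T$. On $[0,T]$ the integral is bounded by $\int_0^T\|D(u)\|^2<\infty$, because $u\in\rL^2(0,T;V)$. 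Combining the two ranges gives
$$\|w(t)\|^2\leq C^2e^{-2\gamma t}\|w(0)\|^2,$$
where $C$ depends on $u,\kappa,\nu,\gamma$ only, and not on $t$, $v_0$, $\mu$ or $\delta$.
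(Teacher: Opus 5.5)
\textbf{Verdict: there is a gap.} Your architecture is the paper's: coercivity from \autoref{prop:feedback_induced_coercivity} with thresholds depending only on $\kappa$, the energy identity of \autoref{thm:wp_error_equation}, the identity $(w\cdot\nabla)u\cdot w=w^TD(u)w$ combined with Ladyzhenskaya and Korn, and then an averaged Gronwall argument. Your explicit choice $2\gamma=\kappa-\Theta_{\kappa,\nu}(M_u^D+\varepsilon)$ and the resulting uniformity of $C$ are fine.

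The gap is the step you yourself call the main obstacle. You leave it open, and neither route you sketch yields $\Theta_{\kappa,\nu}$.

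\emph{Second option.} The bounds $a_{\mu,\delta}(w,w)\geq2\nu\|D(w)\|_{\rL^2}^2$ and $a_{\mu,\delta}(w,w)\geq\kappa\|w\|_{\rL^2}^2$ are two lower bounds for the \emph{same} quantity, not disjoint pieces of it. Absorbing into $\nu\|D(w)\|^2$ and into the $\kappa\|w\|^2$ term while still keeping $\kappa\|w\|^2$ as damping therefore double-counts the dissipation. Done correctly, splitting $\|w\|_{\rH^1}\leq C_{\mathrm{Korn}}^{1/2}(\|D(w)\|_{\rL^2}+\|w\|_{\rL^2})$ \emph{before} Young, and absorbing each piece into a disjoint fraction of $a_{\mu,\delta}$, is intrinsically lossy. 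Even with optimal weights the positivity condition becomes $C_4^2C_{\mathrm{Korn}}\bigl(\kappa^{-1/2}+(2\nu)^{-1/2}\bigr)^2M_u^D<\kappa$, which is strictly stronger than $\Theta_{\kappa,\nu}M_u^D<\kappa$.

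\emph{First option.} A common factor $\theta$ in front of $2\nu\|D(w)\|^2+\kappa\|w\|^2$ can only exploit $\min\{2\nu,\kappa\}$. It gives at best $2C_4^2C_{\mathrm{Korn}}\max\{\kappa^{-1},(2\nu)^{-1}\}\geq\Theta_{\kappa,\nu}$, with equality only if $\kappa=2\nu$.

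\emph{Fallback.} Your remark that ``a slightly worse constant still gives the qualitative result'' is not available here. The hypothesis $\Theta_{\kappa,\nu}M_u^D<\kappa$ and the range $\gamma<\frac12(\kappa-\Theta_{\kappa,\nu}M_u^D)$ are quantitative. For $M_u^D$ between the two thresholds, your argument gives no decay at all.

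\emph{Constant slip.} With the normalisation \eqref{eq:ladyzhenskaya}, the Ladyzhenskaya step produces $C_4$, not $C_4^2$. With $C_4^2$ you would end with $C_4^4$.

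\textbf{The fix.} The missing idea is simply not to split the form. Use both lower bounds on the whole of $a_{\mu,\delta}(w,w)$ and add them:
\[
\|w\|_{\rH^1}^2\leq C_{\mathrm{Korn}}\bigl(\|w\|_{\rL^2}^2+\|D(w)\|_{\rL^2}^2\bigr)\leq C_{\mathrm{Korn}}\bigl(\kappa^{-1}+(2\nu)^{-1}\bigr)a_{\mu,\delta}(w,w).
\]
Then
\[
\Bigl|\int_\Omega w^TD(u)w\,\d x\Bigr|\leq\Theta_{\kappa,\nu}^{1/2}\|D(u)\|_{\rL^2}\|w\|_{\rL^2}\,a_{\mu,\delta}(w,w)^{1/2}\leq\tfrac12a_{\mu,\delta}(w,w)+\tfrac12\Theta_{\kappa,\nu}\|D(u)\|_{\rL^2}^2\|w\|_{\rL^2}^2 .
\]
The remaining half $\frac12a_{\mu,\delta}(w,w)\geq\frac{\kappa}{2}\|w\|_{\rL^2}^2$ supplies the damping, and multiplying by $2$ gives exactly \eqref{eq:error_differential_ineq}. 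This is the paper's \autoref{lem:nonlinear_error_estimate}.

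Two equivalent repairs of your own options also work. You can apply Young to $\|w\|_{\rL^2}\|w\|_{\rH^1}$ first, and only then split $\|w\|_{\rH^1}^2$ against fractions of $a_{\mu,\delta}$ proportional to $(2\nu)^{-1}$ and $\kappa^{-1}$. Alternatively, use the convex combination $\theta\,2\nu\|D(w)\|^2+(1-\theta)\kappa\|w\|^2$ with $\theta=\kappa/(\kappa+2\nu)$. With this step in place, the rest of your proof goes through and coincides with the paper's argument via \autoref{lem: modified gronwall}.
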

\noindent 
The abstract condition \eqref{eq:non-linear_gap_condition} is automatically satisfied in the absence of external
forcing, because it can be proved that $M_u^D = 0 $ when $ g= 0.$

\begin{cor}[The unforced case]
\label{cor:unforced_synchronisation}
Assume \autoref{cond:dense_boundary_observations} and let $g=0$. Then, for
every fixed viscosity $\nu>0$ and every
$\kappa\in(0,\kappa_{\infty,\Gamma})$, there exist $\mu_0>0$ and
$\delta_0>0$ such that the conclusion of \autoref{thm:synchronisation} holds
for every $\mu\geq\mu_0$ and every $0<\delta\leq\delta_0$.
\end{cor}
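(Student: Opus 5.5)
The plan is to reduce the corollary to \autoref{thm:synchronisation} by showing that $M_u^D=0$ when $g=0$. Once this is known, condition \eqref{eq:non-linear_gap_condition} reads $\Theta_{\kappa,\nu}\cdot 0=0<\kappa$. This holds for every $\kappa\in(0,\kappa_{\infty,\Gamma})$, and that interval is non-empty because $\kappa_{\infty,\Gamma}\geq c_\Gamma\nu>0$ by \autoref{thm:main_boundary_feedback_spectral_gaps}. \autoref{thm:synchronisation} then gives $\mu_0,\delta_0$ and the exponential estimate for every $0<\gamma<\kappa/2$.

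To prove $M_u^D=0$, I use the energy equality of the reference solution. Testing \eqref{eq: 2D nse} with $u$ (with $g=0$), the trilinear term vanishes by incompressibility and impermeability. The Navier-slip condition turns the viscous term into $a_\alpha(u,u)$. This gives
\[
\frac12\|u(t)\|_{\rL^2(\Omega)}^2+\int_0^t\Bigl(2\nu\|D(u(s))\|_{\rL^2(\Omega)}^2+\alpha\|u_\tau(s)\|_{\rL^2(\partial\Omega)}^2\Bigr)\d s=\frac12\|u_0\|_{\rL^2(\Omega)}^2 .
\]
For weak solutions with $u_0\in H$ this is an equality in 2D, and the energy inequality alone would already suffice. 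Dropping the nonnegative friction term yields
\[
\int_0^t\|D(u(s))\|_{\rL^2(\Omega)}^2\,\d s\leq \frac{\|u_0\|_{\rL^2(\Omega)}^2}{4\nu}
\]
uniformly in $t$. Dividing by $t$ and letting $t\to\infty$ gives $M_u^D=0$. In particular the hypothesis $M_u^D<\infty$ of \autoref{thm:synchronisation} holds.

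The only delicate point is justifying the energy identity for the weak solution with Navier boundary conditions. The key fact is that the weak formulation is posed with the bilinear form $a_\alpha$ on $V$, so the boundary stress term is already encoded variationally. I would cite the well-posedness theory of \cite{Kelliher2006}, where the solution is regular enough to be used as a test function (Lions--Magenes, $u\in\rL^2(0,T;V)$ with $\partial_t u\in\rL^2(0,T;V')$ in 2D). Notably, no coercivity of $a_\alpha$ itself is needed: tangential rigid motions, for which $D(u)=0$, do not spoil the argument, since only the integral of $\|D(u)\|^2$ enters.
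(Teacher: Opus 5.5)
Your proposal is correct and matches the paper's proof: the energy inequality of \autoref{lem: well_posedness_reference} with $g=0$, together with $a_\alpha(u,u)\geq 2\nu\|D(u)\|_{\rL^2(\Omega)}^2$, gives $\frac1T\int_0^T\|D(u(t))\|_{\rL^2(\Omega)}^2\d t\leq \|u_0\|_{\rL^2(\Omega)}^2/(4\nu T)\to0$, hence $M_u^D=0$, and \autoref{thm:synchronisation} then applies for every $\kappa\in(0,\kappa_{\infty,\Gamma})$. As you note, the energy inequality already suffices, so the energy equality need not be justified.
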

\noindent 
In addition to the unforced case, \autoref{thm:synchronisation} gives the following four concrete synchronisation criteria for forced flows.
\begin{enumerate}[(i)]
\item \emph{Large viscosity in the absence of tangential rigid motions.}
Suppose that the only rigid motion satisfying the impermeability condition on
$\partial\Omega$ is the zero field, and let $C_\Omega$ be the constant from the
Korn-Poincar\'e inequality, see \autoref{lem:korn_poincare_no_rigid_motions}. If
$$
\nu^4
>
\frac{C_4^2 C_\Omega^3}{8}
    \|g\|_{\rL^2(\Omega)}^2,
$$
then exponential synchronisation holds, see
\autoref{prop:large_viscosity_synchronisation}.
\item \emph{Large viscosity with positive boundary friction.}
Suppose that $\alpha>0$, fix $\underline\nu>0$, and let
$\lambda_{\alpha,\underline\nu}$ and $C_{\alpha,\underline\nu}$ be the
constants in \autoref{lem:positive_alpha_coercivity}. If
$$
\nu
>
\max\left\{
\frac{C_4^2 C_{\alpha,\underline\nu}}
{\lambda_{\alpha,\underline\nu}^2}
\|g\|_{\rL^2(\Omega)}^2,
\underline\nu
\right\},
$$
then exponential synchronisation holds. In particular, positive boundary
friction removes the rigid-rotation obstruction, so this criterion applies
also to disks and concentric annuli, see
\autoref{prop:large_viscosity_positive_alpha}.
\item \emph{Small forcing under an unnudged spectral gap.}
Suppose that there exists $\lambda_\alpha>0$ such that
$$
a_\alpha(z,z)
\geq
\lambda_\alpha\|z\|_{\rL^2(\Omega)}^2
\qquad
\text{for every }z\in V.
$$
If, for some $\kappa\in(0,\kappa_{\infty,\Gamma})$,
$$
\| g \|_{ \rL^2(\Omega)}^2
<
\frac{2\nu\lambda_\alpha\kappa}{\Theta_{\kappa,\nu}},
$$
then exponential synchronisation holds, see
\autoref{prop:small_forcing_synchronisation}.
\item \emph{Perfect slip with tangential rigid motions.}
Suppose that $\alpha=0$ and
$\mathcal R_\Omega:=\{r\in V:D(r)=0\}\neq\{0\}$. Let
$\mbbP_H$ denote the Helmholtz projection, let $\Pi_{\mathcal R}$ be the
$\rL^2$-orthogonal projection onto $\mathcal R_\Omega$, and set
\[
g_\perp=(I-\Pi_{\mathcal R})\mbbP_Hg.
\]
Let $C_{\mathcal{R}}$ be the constant in the Korn-Poincar\'e inequality modulo $\mathcal{R}_\Omega$, see \autoref{lem:korn_poincare_modulo_rigid_motions}. If, for some $\kappa\in(0,\kappa_{\infty,\Gamma})$,
\[
\frac{C_{\mathcal R}\Theta_{\kappa,\nu}}{4\nu^2}
\|g_\perp\|_{\rL^2(\Omega)}^2<\kappa,
\]
then exponential synchronisation holds, see
\autoref{prop:perfect_slip_rigid_motions}.
\end{enumerate}

\subsection{Organisation of the paper}
This paper is organised as follows. In \autoref{sec: boundary-feedback}, we introduce the functional setting, formulate the reference and assimilated Navier-Stokes systems, and derive the evolution equation and energy balance for the assimilation error. In \autoref{sec: spectral gaps}, we study the spectral gaps generated by the boundary feedback and establish the resulting coercivity for sufficiently strong and sufficiently resolved observations. In \autoref{sec:synchronisation}, we combine this coercivity with the error energy estimate to prove the abstract exponential synchronisation theorem and derive the concrete large-viscosity, small-forcing, and perfect-slip regimes. Finally, \autoref{sec: appendix} collects the auxiliary results related to Korn and trace inequalities, well-posedness, and the averaged Gronwall argument.

\section{Boundary-feedback formulation and error equation}
\label{sec: boundary-feedback}
\subsection{Functional setting and boundary observations}
We start by fixing the spaces and notation used throughout the rest of the
paper. Let
$$
\cV := \left\{ \phi\in \rC^\infty (\overline {\Omega}; \R^2): \nabla \cdot \phi=0 \text{ in }\Omega,\, \phi \cdot n = 0 \text{ on }\partial\Omega \right\},
$$
and set
$$
H:=\overline{\cV}^{\, \rL^2(\Omega; \R^2)}, \qquad V:=\overline{\cV}^{\, \rH^1(\Omega; \R^2)}.
$$
For $z\in V$, we denote by
$$
T_\Gamma z:=z_\tau|_\Gamma
$$
the tangential trace on the observed part of the boundary. We also write
$$
b(a,b,c):=\int_\Omega (a \cdot \nabla) b \cdot c \d x,
$$
and define
\begin{equation}
    a_\alpha(z,\phi) := 2\nu\int_\Omega D(z):D(\phi) \d x +\alpha \int_{\partial\Omega} z_\tau \phi_\tau \d \sigma ,
    \label{eq: definition of a alpha}
\end{equation}
while, for the assimilated and error systems,
$$
a_{\mu,\delta}(z,\phi) := a_\alpha(z,\phi) + \mu \langle P_\delta T_\Gamma z, P_\delta T_\Gamma\phi\rangle_{Y_\delta}.
$$
The duality between $V'$ and $V$ is denoted by $\langle\cdot,\cdot\rangle_{V',V}$.

We next show two concrete families of finite-dimensional maps
$P_\delta \colon\rL^2(\Gamma) \to Y_\delta$ satisfying
\autoref{cond:dense_boundary_observations}.

\begin{exa}[Increasing orthogonal projections]
Let $(e_k)_{k \geq 1}$ be an orthonormal basis of $\rL^2(\Gamma)$, and let
$N_\delta \to \infty$ as $\delta \to0$. Set $Y_\delta:=\mathbb{R}^{N_\delta} $. Define
$$
P_\delta f:=
( (f,e_1)_{\rL^2(\Gamma)}, \dots, (f,e_{N_\delta})_{\rL^2(\Gamma)}).
$$
By Bessel's inequality,
$$
\|P_\delta f\|_{Y_\delta}^2 =
\sum_{k=1}^{N_\delta}
|(f,e_k)_{\rL^2(\Gamma)}|^2
\leq
\|f\|_{\rL^2(\Gamma)}^2.
$$
Moreover, by Parseval identity we have
$$
\|P_\delta f\|_{Y_\delta}^2
\to  \sum_{k=1}^{\infty}
|( f, e_k )_{\rL^2(\Gamma)}|^2
= \|f\|_{\rL^2(\Gamma)}^2.
$$
This example describes observations that measure a finite number of boundary
modes. For instance, if $(e_k)_{k\geq1}$ is a Fourier or another spectral
basis, then $P_\delta f$ records the first $N_\delta$ coefficients of the
tangential velocity. Finite collections of modes are standard interpolant observables in
continuous data assimilation \cite{AOT2014}. Moreover, Fourier representations
of wall data are used in wall-based flow reconstruction to identify the
large-scale components that remain observable away from the boundary
\cite{Encinar_2019}.
\end{exa}

\begin{exa}[Averages over boundary cells]
For every $\delta>0$, let
$(\Gamma_{\delta,j})_{j=1}^{N_\delta}$ be a measurable partition of
$\Gamma$ such that
$|\Gamma_{\delta,j}|>0$ and
$$
\max_{1\leq j \leq N_\delta} \operatorname{diam} ( \Gamma_{ \delta , j } )
\to 0 \qquad \text{as } \delta \to 0.
$$
Set $Y_\delta : = \R^{N_\delta}$ with its Euclidean norm and define
$$
(P_\delta f)_j := |\Gamma_{\delta,j}|^{-1/2}
\int_{\Gamma_{ \delta,j}} f\d \sigma.
$$
Then
$$
\|P_\delta  f \|_{Y_\delta}^2
= \sum_{j=1}^{N_\delta}
|\Gamma_{\delta,j}|
\vert  \frac1{|\Gamma_{\delta,j}|}
\int_{\Gamma_{\delta,j}}f\d \sigma
\vert  ^2.
$$
If
$$
A_\delta f :=
\sum_{j=1}^{N_\delta}
\left( \frac1{|\Gamma_{\delta,j}|} \int_{\Gamma_{ \delta, j}} f \d \sigma \right)
\mathds{1}_{ \Gamma_{\delta , j }},
$$
then
$$
\| P_\delta f \|_{Y_\delta} = \| A_\delta f\|_{\rL^2(\Gamma)}.
$$
Jensen's inequality yields
$$
\|A_\delta  f \|_{ \rL^2(\Gamma)}
 \leq \| f \|_{ \rL^2(\Gamma)}.
$$
Moreover, the vanishing mesh size implies that
$A_\delta g\to g$ in $\rL^2(\Gamma)$ for every continuous function $g$ on
$\overline{\Gamma}$. By density and the preceding contraction estimate, the
same convergence holds for every $f \in \rL^2(\Gamma)$. Consequently,
$$
\|P_\delta f\|_{Y_\delta}^2 = \|A_\delta f\|_{\rL^2(\Gamma)}^2
\to \|f\|_{\rL^2(\Gamma)}^2.
$$
This example represents local boundary sensors: each component of $P_\delta f$ is a normalised average of the tangential velocity over one boundary cell. Such observables are the boundary analogue of the local spatial averages used in continuous data assimilation \cite{AOT2014}. Differently from point evaluations, they are bounded on $\rL^2(\Gamma)$ and reflect measurements with finite spatial resolution.
\end{exa}
\noindent 
Both constructions in the previous two examples satisfy
\autoref{cond:dense_boundary_observations} with $C_P=1$.

\subsection{Reference and assimilated Navier-Stokes systems}
\label{subsec: well posedness reference and DA system}
With the functional setting fixed above, we recall the weak well-posedness result for
the reference 2D NSEs with Navier slip boundary conditions \eqref{eq: 2D nse}. Its proof, which is based on \cite[Theorem 6.1]{Kelliher2006}, is postponed to \autoref{appendix: well posedness}.
\begin{lem}[Well-posedness of the reference Navier-Stokes system]
\label{lem: well_posedness_reference}
Let $T>0$, $u_0\in H$, and let $g\in \rL^2(\Omega;\R^2)$ be time independent. Then the Navier-Stokes system
\eqref{eq: 2D nse} with Navier slip boundary condition admits a unique weak
solution $u$ such that
$$
u \in \rL^\infty(0,T;H) \cap \rL^2(0,T;V) \cap \rC([0,T];H), \qquad  u'\in \rL^{4/3}(0,T;V'). 
$$
It is characterised by $u(0)=u_0$ in $H$, and for every
$\phi\in V$
$$
\langle  u',\phi\rangle_{V',V} + a_\alpha(u,\phi) +
b(u,u,\phi) = (g,\phi)_{\rL^2(\Omega)}
$$
in $\mathcal{D}'(0,T)$. Moreover, $u$ satisfies the energy inequality
\begin{equation}
    \frac{1}{2} \| u(t) \|_{\rL^2(\Omega)}^2
+ \int_0^t a_\alpha(u(s),u(s))\d s \leq
\frac{1}{2} \| u_0 \|_{\rL^2(\Omega)}^2
+ \int_0^t (g,u(s))_{\rL^2(\Omega)}\d s
\label{eq: energy inequality 2DNSE}
\end{equation}
for every $t\in[0,T]$.
\end{lem}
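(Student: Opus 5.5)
We would prove the lemma by the classical Faedo--Galerkin method, in the spirit of \cite[Theorem 6.1]{Kelliher2006}, working entirely with the weak formulation built on $a_\alpha$. The starting point is the identity $-\nu\Delta u=-2\nu\,\div D(u)$ for divergence-free fields. Integrating by parts against $\phi\in V$, the boundary term $2\nu\int_{\partial\Omega}D(u)n\cdot\phi\d\sigma$ reduces to its tangential part, since $\phi\cdot n=0$. The Navier condition then replaces this term by $-\alpha\int_{\partial\Omega}u_\tau\phi_\tau\d\sigma$. This justifies the variational formulation stated in the lemma.

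\textbf{Step 1: coercivity.} We would establish that $a_\alpha(z,z)+\|z\|_{\rL^2(\Omega)}^2\geq c\|z\|_{\rH^1(\Omega)}^2$ on $V$. This follows from Korn's second inequality $\|z\|_{\rH^1}^2\leq C_{\mathrm{Korn}}(\|D(z)\|_{\rL^2}^2+\|z\|_{\rL^2}^2)$ together with $\alpha\geq0$. The shifted form is then a closed, symmetric, coercive form on $V\hookrightarrow H$ with compact embedding. Consequently it has an orthonormal eigenbasis $(w_k)$ of $H$ that is orthogonal in $V$, and we would use this basis for the Galerkin approximations $u_m$.

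\textbf{Step 2: a priori bounds.} The trilinear form satisfies $b(z,z,z)=0$ on $V$, by integration by parts using $\nabla\cdot z=0$ and $z\cdot n=0$. Testing the Galerkin system by $u_m$ and using Step 1 with Young's inequality gives bounds on $u_m$ in $\rL^\infty(0,T;H)\cap\rL^2(0,T;V)$ that are uniform in $m$. For the time derivative we would use Ladyzhenskaya's inequality $\|z\|_{\rL^4}^2\leq C_4\|z\|_{\rL^2}\|z\|_{\rH^1}$. It yields $|b(u,u,\phi)|=|b(u,\phi,u)|\leq C\|u\|_{\rL^2}\|u\|_{V}\|\phi\|_V$, so $u_m'$ is bounded in $\rL^2(0,T;V')$ and a fortiori in $\rL^{4/3}(0,T;V')$. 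The Aubin--Lions lemma then gives strong convergence in $\rL^2(0,T;H)$, which is enough to pass to the limit in the nonlinear term tested against $\phi\in V$. Lower semicontinuity of the norms gives the energy inequality \eqref{eq: energy inequality 2DNSE}.

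\textbf{Step 3: continuity and uniqueness.} Since $u\in\rL^2(0,T;V)$ and $u'\in\rL^2(0,T;V')$, the Lions--Magenes lemma gives $u\in\rC([0,T];H)$ and the identity $\frac{\d}{\d t}\|u\|^2=2\langle u',u\rangle$; this also identifies $u(0)=u_0$. For uniqueness, take two solutions, set $w=u_1-u_2$ and test the difference of the equations by $w$. Using $b(u_1,w,w)=0$, the remaining term is $b(w,u_2,w)\leq C_4\|w\|_{\rL^2}\|w\|_V\|u_2\|_V$. Absorbing $\|w\|_V$ via Step 1 and Young's inequality gives $\frac{\d}{\d t}\|w\|^2\leq C(1+\|u_2\|_V^2)\|w\|^2$, and Gronwall's lemma yields $w\equiv0$.

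\textbf{Main obstacle.} We expect the main difficulty to be the boundary-dependent parts, not the compactness argument. Two points need care: the Korn inequality for fields that satisfy only the impermeability condition on a smooth bounded domain, and the vanishing of $b(z,z,z)$ together with the justification of the integration by parts in the weak sense. The latter would be verified on the dense class $\cV$ and extended to $V$ by continuity.
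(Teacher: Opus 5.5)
Your proposal is correct and follows the same Galerkin route as the paper. The paper invokes \cite[Theorem 6.1]{Kelliher2006} for $g=0$, notes that the forcing only adds the term $(g,u^m)$, and obtains the energy inequality from the Galerkin energy identity by weak lower semicontinuity; you supply the details (Korn coercivity, spectral basis, Ladyzhenskaya bound on $u_m'$, Aubin--Lions, Gronwall uniqueness) that the paper leaves to that reference. One point to state explicitly: your Ladyzhenskaya estimate applies to any weak solution in $\rL^\infty(0,T;H)\cap\rL^2(0,T;V)$, so every solution in the stated class has $u'\in\rL^2(0,T;V')$, which justifies testing the difference equation by $w$ in the uniqueness step and even gives an energy equality.
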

\noindent 
We next show that adding the boundary feedback term preserves the standard
well-posedness theory. The detailed proof of this fact can be found in \autoref{appendix: well posedness}.
\begin{lem}[Well-posedness of the boundary data assimilation system]
\label{lem:wp_da_system}
Assume \autoref{cond:dense_boundary_observations}. Let $T>0$, $v_0\in H$, $g\in \rL^2(\Omega; \R^2)$, $\mu>0$, and $\delta>0$ be fixed. Assume moreover
$$
y_\delta\in \rL^2(0,T;Y_\delta).
$$
Then the data assimilation system \eqref{eq: da_model} admits a unique weak
solution $v$ satisfying
$$
v \in \rL^\infty(0,T;H)\cap \rL^2(0,T;V) \cap \rC([0,T];H),
\qquad  v' \in \rL^{4/3}(0,T;V').
$$
It is characterised by $v(0)=v_0$ in $H$, and for every
$\phi\in V$
$$
\langle v', \phi\rangle_{V',V} +a_{\mu,\delta}(v,\phi) + b(v,v,\phi)= (g,\phi)_{\rL^2(\Omega)}+ \mu \langle y_\delta, P_\delta T_\Gamma \phi \rangle_{Y_\delta}
$$
in $\mathcal{D}'(0,T)$. In particular, the boundary operator $P_\delta^*P_\delta T_\Gamma$ appearing in \eqref{eq: da_model} is understood through the
bounded, symmetric, non-negative form
$$
(z,\phi) \mapsto \mu \langle P_\delta T_\Gamma z, P_\delta T_\Gamma\phi\rangle_{Y_\delta}
$$
on $V\times V$.
\end{lem}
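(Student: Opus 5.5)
The statement to prove is \autoref{lem:wp_da_system}: weak well-posedness of the assimilated system. The plan is to treat the feedback as a bounded, symmetric, non-negative perturbation of the Navier-slip form $a_\alpha$ plus a known forcing. Then the Galerkin/energy argument behind \autoref{lem: well_posedness_reference} goes through with only cosmetic changes.

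\emph{Step 1: the new form and forcing.} By the trace theorem, $T_\Gamma:V\to\rL^2(\Gamma)$ is bounded, with $\|T_\Gamma z\|_{\rL^2(\Gamma)}\le C_{\mathrm{tr}}\|z\|_{\rH^1}$. Together with \autoref{cond:dense_boundary_observations}, this gives
\[
\mu|\langle P_\delta T_\Gamma z,P_\delta T_\Gamma\phi\rangle_{Y_\delta}|\le \mu C_P^2C_{\mathrm{tr}}^2\|z\|_{\rH^1}\|\phi\|_{\rH^1},
\]
so $a_{\mu,\delta}$ is bounded, symmetric and satisfies $a_{\mu,\delta}(z,z)\ge a_\alpha(z,z)$. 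The right-hand side $\phi\mapsto\mu\langle y_\delta,P_\delta T_\Gamma\phi\rangle$ defines $f_\delta:=\mu T_\Gamma^*P_\delta^*y_\delta\in \rL^2(0,T;V')$. The one structural point is that $a_\alpha$ alone controls only $\|D(z)\|^2$, not the full $\rH^1$ norm, because rigid motions lie in its kernel. I will therefore use the Korn inequality \eqref{eq:korn_second_squared}, $\|z\|_{\rH^1}^2\le C_{\mathrm{Korn}}(\|D(z)\|^2+\|z\|^2)$, to get a G\aa rding inequality
\[
a_{\mu,\delta}(z,z)+\nu\|z\|^2_{\rL^2}\ge c\|z\|_V^2 .
\]

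\emph{Step 2: existence.} I will use a Galerkin scheme with a basis of $V$ (for instance, eigenfunctions of the Stokes operator with Navier conditions). Testing with the approximate solution $v_m$ and using $b(v_m,v_m,v_m)=0$, which holds since $v_m\cdot n=0$ and $\nabla\cdot v_m=0$, gives
\[
\tfrac12\tfrac{\d}{\d t}\|v_m\|^2+a_{\mu,\delta}(v_m,v_m)=(g,v_m)+\langle f_\delta,v_m\rangle .
\]
Absorbing the forcing terms via the G\aa rding bound and Gronwall yields bounds for $v_m$ in $\rL^\infty(H)\cap \rL^2(V)$, uniform in $m$. The 2D Ladyzhenskaya inequality \eqref{eq:ladyzhenskaya} gives $\|B(v_m)\|_{V'}\lesssim\|v_m\|_{\rL^2}\|v_m\|_V$, hence $v_m'$ is bounded in $\rL^2(V')$, and in particular in $\rL^{4/3}(V')$. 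Aubin--Lions compactness then lets us pass to the limit in the nonlinear term, while the linear feedback term passes to the limit by weak convergence. The Lions--Magenes lemma gives $v\in\rC([0,T];H)$ and identifies the initial datum.

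\emph{Step 3: uniqueness.} Take two solutions and let $w$ be their difference. The energy equality (valid since $v\in \rL^2(V)$ and $v'\in \rL^2(V')$ in 2D) gives
\[
\tfrac12\tfrac{\d}{\d t}\|w\|^2+a_{\mu,\delta}(w,w)=-b(w,v_1,w).
\]
I will estimate the right-hand side by $C\|w\|\,\|w\|_V\|v_1\|_V$ via Ladyzhenskaya, absorb $\|w\|_V^2$ using the G\aa rding bound, and conclude with Gronwall, since $\|v_1\|_V^2\in \rL^1(0,T)$. The main point needing care is Step 1's Korn/G\aa rding bound; everything else is standard. The bounded, symmetric, non-negative form interpretation asserted at the end of the lemma is exactly the content of Step 1.
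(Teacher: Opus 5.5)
Your proposal is correct and follows essentially the same route as the paper: the feedback is a bounded, symmetric, non-negative perturbation of $a_\alpha$, Korn's inequality gives the coercivity bound up to an $\rL^2$ shift, existence is by the standard Galerkin method, and uniqueness comes from the energy identity for the difference combined with Ladyzhenskaya and Gronwall. Your observation that in two dimensions one actually gets $v'\in\rL^2(0,T;V')$ is a useful refinement, since it is what justifies the energy identity in the uniqueness step; the paper uses that identity here without comment and only makes the $\rL^2(0,T;V')$ upgrade later, for the error equation.
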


\subsection{Assimilation error and energy balance}
The data assimilation error $w:= v-u$ solves 
\begin{equation}
\left \lbrace 
\begin{aligned}
\partial_t w-\nu \Delta w+(v\cdot\nabla)w+(w\cdot\nabla)u+\nabla\pi&=0,
\qquad & \text{in } &\Omega,\\
\nabla\cdot w&=0,
& \text{in } &\Omega,\\
w\cdot n &=0,
& \text{on } &\partial\Omega,\\
\big(2\nu D(w)n\big)_\tau+\alpha w_\tau
+\mu P_\delta^*P_\delta T_\Gamma w &=0,
& \text{on } &\Gamma, \\
\bigl(2\nu D(w)n\bigr)_\tau+\alpha w_\tau&=0,
&\text{on } &\partial\Omega\setminus\Gamma,\\
w(0) &= v_0 - u_0,
\end{aligned}
\right.
\label{eq: error_equation}
\end{equation}
where $\pi$ denotes the pressure. The next lemma shows that the difference between the assimilated and reference solutions is the unique weak solution of the error equation in the natural energy class and satisfies the fundamental energy balance used in the synchronisation analysis.

\begin{lem}[Well-posedness of the error equation and energy balance]
\label{thm:wp_error_equation}
Let the assumptions of \autoref{lem: well_posedness_reference} and \autoref{lem:wp_da_system} hold, and suppose that the observations are generated by
the reference solution, namely
$$
y_\delta(t)=P_\delta T_\Gamma u(t) \qquad\text{for a.e. }t\in(0,T).
$$
Set $w_0:=v_0-u_0$.  For the fixed functions $u$ and $v$ given by
\autoref{lem: well_posedness_reference} and \autoref{lem:wp_da_system},
respectively, the linear error equation \eqref{eq: error_equation}
admits a unique weak solution $w$ satisfying
$$
w\in \rL^\infty(0,T;H)\cap \rL^2(0,T;V) \cap \rC([0,T];H),
\qquad w'\in \rL^{2}(0,T;V').
$$
It is characterised by $w(0)=w_0$ in $H$, and for every
$\phi\in V$
$$
\langle w', \phi \rangle_{ V', V} + a_{\mu,\delta}(w,\phi) + b(v , w, \phi) + b(w,u,\phi) =0
$$
in $\mathcal{D}'(0,T)$. Moreover, $w=v-u$, where $u$ and $v$ are the unique
solutions given by \autoref{lem: well_posedness_reference} and
\autoref{lem:wp_da_system}, and the following energy identity holds in the sense of
distributions on $(0,T)$
\begin{equation}
\frac{1}{2}\frac{\d}{\d t} \| w (t) \|_{\rL^2(\Omega)}^2
+ a_{ \mu, \delta}( w(t) , w(t))= - \int_\Omega (w(t) \cdot \nabla) u(t) \cdot w(t) \d x .
    \label{eq: error assimilation equality distribution}
\end{equation}
\end{lem}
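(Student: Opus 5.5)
The plan is to treat $w=v-u$ in two ways at once: as the solution of a linear problem with coefficients $u,v$ fixed, and as the difference of the two nonlinear solutions already in hand. Uniqueness in the energy class will tie the two descriptions together. The energy identity will then follow from the Lions--Magenes lemma once $w'\in \rL^2(0,T;V')$ is known.

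\textbf{Step 1: existence and the improved time regularity for $w=v-u$.} By \autoref{lem: well_posedness_reference} and \autoref{lem:wp_da_system}, $w:=v-u$ lies in $\rL^\infty(0,T;H)\cap\rL^2(0,T;V)\cap\rC([0,T];H)$ and $w(0)=w_0$. Subtracting the two weak formulations and using $y_\delta=P_\delta T_\Gamma u$ gives
\[
\mu\langle P_\delta T_\Gamma v-y_\delta,P_\delta T_\Gamma\phi\rangle_{Y_\delta}=\mu\langle P_\delta T_\Gamma w,P_\delta T_\Gamma\phi\rangle_{Y_\delta},
\]
and $a_\alpha(v,\phi)-a_\alpha(u,\phi)=a_\alpha(w,\phi)$. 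The nonlinear terms split as
\[
b(v,v,\phi)-b(u,u,\phi)=b(v,w,\phi)+b(w,u,\phi).
\]
Hence $w$ satisfies the linear weak formulation. For the regularity of $w'$ I would use Ladyzhenskaya's inequality \eqref{eq:ladyzhenskaya} together with $b(a,b,c)=-b(a,c,b)$ for $a\in V$. This antisymmetry holds because $\nabla\cdot a=0$ and $a\cdot n=0$, so no boundary term appears. It gives
\[
|b(v,w,\phi)|\le C\|v\|_{\rL^4}\|w\|_{\rL^4}\|\phi\|_V\le C\|v\|_H^{1/2}\|v\|_V^{1/2}\|w\|_H^{1/2}\|w\|_V^{1/2}\|\phi\|_V,
\]
which belongs to $\rL^2(0,T)$ after multiplication by $\|\phi\|_V$ by Cauchy--Schwarz in time. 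The term $b(w,u,\phi)$ is bounded the same way, and $a_{\mu,\delta}(w,\cdot)\in\rL^2(0,T;V')$ since the form is bounded on $V$. Here $\mu\|P_\delta T_\Gamma z\|\le \mu C_P C_{tr}\|z\|_V$. Therefore $w'\in\rL^2(0,T;V')$, which settles existence in the stated class.

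\textbf{Step 2: uniqueness for the linear problem.} Take two solutions with the same data and let $z$ be their difference. It lies in $\rL^2(0,T;V)$ with $z'\in\rL^2(0,T;V')$, so the Lions--Magenes lemma gives $\frac12\frac{\d}{\d t}\|z\|_H^2=\langle z',z\rangle$. Testing with $z$ and using $b(v,z,z)=0$ yields
\[
\tfrac12\tfrac{\d}{\d t}\|z\|^2+a_{\mu,\delta}(z,z)=-b(z,u,z).
\]
By Ladyzhenskaya, $|b(z,u,z)|\le C\|z\|_H\|z\|_V\|u\|_V$. To absorb this I need the coercivity $a_{\mu,\delta}(z,z)+\|z\|_H^2\ge c\|z\|_V^2$. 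This follows from Korn's inequality \eqref{eq:korn_second_squared}, because the $\alpha$- and $\mu$-terms are non-negative. Young's inequality then gives
\[
\tfrac{\d}{\d t}\|z\|^2\le C(1+\|u\|_V^2)\|z\|^2,
\]
with $\|u\|_V^2\in\rL^1(0,T)$. Gronwall's inequality gives $z\equiv0$, so $w=v-u$ is the unique solution.

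\textbf{Step 3: the energy identity.} Since $w\in\rL^2(0,T;V)$ and $w'\in\rL^2(0,T;V')$, Lions--Magenes gives $\frac{\d}{\d t}\frac12\|w\|^2=\langle w',w\rangle$ in $\mathcal D'(0,T)$. Testing the weak formulation with $\phi=w(t)$ is legitimate by density in time. The term $b(v,w,w)$ vanishes, and $b(w,u,w)$ is exactly $\int_\Omega(w\cdot\nabla)u\cdot w\d x$, which is the identity \eqref{eq: error assimilation equality distribution}.

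The main obstacle is Step 1. The individual solutions only have $u',v'\in\rL^{4/3}(0,T;V')$, which is the 3D-type bound, so one must check that the difference genuinely gains $\rL^2$ time regularity. The key point is that each nonlinear term contains at most one non-$\rL^\infty_tH$ factor at the $\rH^1$ level in both $w$ and $v$ simultaneously, with half powers each. The justification of antisymmetry of $b$ under the Navier-slip setting, where only $a\cdot n=0$ holds, also needs a line.
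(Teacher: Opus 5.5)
Your proposal is correct and follows the paper's proof essentially step for step. You subtract the weak formulations, upgrade $w'$ to $\rL^2(0,T;V')$ via the antisymmetry of $b$ and Ladyzhenskaya's inequality, use the Lions--Magenes chain rule for the energy identity, and close uniqueness with Korn plus Gronwall; the paper simply derives the energy identity before proving uniqueness. One small remark on your closing comment: the same bound you apply to $B(v,w)$ applies verbatim to $B(u,u)$ and $B(v,v)$ in 2D, so $u',v'$ already lie in $\rL^2(0,T;V')$. The difference therefore does not genuinely gain time regularity; the $\rL^{4/3}$ stated in the earlier lemmas is simply not sharp.
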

\begin{proof}
Let $u$ and $v$ be the unique weak solutions given by
\autoref{lem: well_posedness_reference} and \autoref{lem:wp_da_system}, and set
$w:=v-u$. Then
$$
w \in \rL^\infty(0,T;H) \cap \rL^2(0,T;V) \cap \rC([0,T];H),
\qquad w'\in \rL^{4/3}(0,T;V'),
$$
and subtracting the two variational formulations gives, for every $\phi\in V$
\begin{equation}
    \langle w', \phi \rangle_{V',V} +a_{ \mu , \delta}(w,\phi) +b( v, w , \phi)+b( w , u ,\phi) =0
    \label{eq: weak_formulation error}
\end{equation}
in $\mathcal{D}'(0,T)$. Thus $w$ is a weak solution of \eqref{eq: error_equation}.

We first improve the time regularity of $w$. For $a,b\in V$, define
$B(a,b)\in V'$ by
$$
\langle B(a,b), \phi\rangle_{V',V} := b( a, b , \phi), \qquad   \phi \in V.
$$
Since $\nabla \cdot a=0$ in $\Omega$ and $a\cdot n=0$ on
$\partial \Omega$, integration by parts gives
$$
b(a, b ,\phi) =-b( a, \phi,b).
$$
Therefore, by Ladyzhenskaya's inequality,
$$ 
|b(a,b,\phi)| = |b(a,\phi,b)|\leq \| a \|_{ \rL^4(\Omega)} \| b \|_{\rL^4(\Omega)}
\| \nabla \phi\|_{ \rL^2(\Omega) }\leq C \| a \|_H^{1/2} \| a \|_V^{1/2}
\|b\|_H^{1/2} \| b\|_V^{1/2} \|\phi\|_V.
$$
Consequently, if$a,b\in\rL^\infty(0,T;H)\cap\rL^2(0,T;V),$
then $
B(a,b)\in\rL^2(0,T;V').$
Indeed,
\begin{equation*}
    \begin{split}
        \int_0^T\|B(a,b)\|_{V'}^2 \d t &\leq
C \| a \|_{\rL^\infty(0,T;H)} \| b \|_{\rL^\infty(0,T;H)} \int_0^T \| a(t) \|_V\| b(t) \|_V \d t\\ &\leq C \| a \|_{ \rL^\infty(0,T ; H)} \| b \|_{ \rL^\infty(0,T;H)} \| a \|_{\rL^2 (0,T ; V)}
\| b \|_{\rL^2(0,T;V)}.
    \end{split}
\end{equation*}
In particular
$ B(v,w),\ B(w,u)\in\rL^2(0,T;V'). $ Moreover, the boundedness of $a_{\mu,\delta}$ on $V\times V$ gives $ a_{\mu,\delta} (w,\cdot)\in\rL^2(0,T;V') .
$ It follows from \eqref{eq: weak_formulation error} that
$
w'\in\rL^2(0,T;V').
$

Since
$
w\in\rL^2(0,T;V), \ w'\in\rL^2(0,T;V'),
$
the standard chain rule for the Gelfand triple
$V\hookrightarrow H\hookrightarrow V'$ implies that
$t\mapsto\|w(t)\|_H^2$ is absolutely continuous and
$$
\frac{1}{2}\frac{\d}{\d t}\|w(t)\|_H^2 = \langle w'(t) , w(t)\rangle_{V',V}
$$
for almost every $t\in(0,T)$. Pairing
\eqref{eq: weak_formulation error} with $w(t)$ therefore gives
$$
\frac{1}{2} \frac{\d}{\d t} \|w(t)\|_H^2
+ a_{ \mu, \delta}( w(t), w(t)) + b( v(t) , w(t) , w(t)) + b(w(t) , u(t) , w(t)) = 0.
$$
Since $b(v,w,w) = 0$, we obtain
$$
\frac{1}{2}\frac{\d}{\d t} \| w(t) \|_{\rL^2(\Omega)}^2 + a_{ \mu , \delta}(w(t),w(t)) = -\int_\Omega
(w(t) \cdot \nabla ) u ( t) \cdot w(t) \d x,
$$
which proves
\eqref{eq: error assimilation equality distribution}.

It remains to prove uniqueness. Let $w_1,w_2$ be two weak solutions with the
same initial condition and set $e:=w_1-w_2$. The preceding regularity argument
gives
$
e\in\rL^2(0, T ; V), \ e' \in \rL^2( 0 , T; V').
$
The chain rule therefore permits us to test the difference equation by $e$,
and we obtain
$$
\frac{1}{2} \frac{\d}{\d t} \| e(t) \|_H^2 + a_{\mu, \delta }( e(t ) , e(t)) = -b( e( t ) ,u (t ) ,e (t)).
$$
By Ladyzhenskaya's and Young's inequalities,
$$
| b( e ,u , e)| \leq C \| u \|_V \| e \|_H \| e \|_V \leq \frac{c_0}{2}\|e\|_V^2
+ C \|u \|_V^2 \|e\|_H^2.
$$
Moreover, from Korn's inequality in \autoref{lem:korn_compact_trace} and the non-negativity of the feedback term we have
that there exist constants $c_0,C_0>0$ such that
$$
a_{\mu,\delta}( e, e)+C_0\|e\|_H^2 \geq c_0 \| e \|_V^2.
$$
Consequently,
$$
\frac{\d}{\d t} \| e(t) \|_H^2 \leq
C(1 + \| u(t) \|_V^2) \| e(t)\|_H^2.
$$
Since $u\in\rL^2(0,T;V)$ and $e(0)=0$, Gronwall's lemma yields $e=0$.
Hence the weak solution is unique and necessarily coincides with $w=v-u$.

\end{proof}

\section{Spectral gaps generated by boundary feedback}
\label{sec: spectral gaps}

\subsection{Three boundary-feedback spectral gaps}
\label{sec: feedback boundary coercivity}

This subsection introduces the coercive estimate that will be used in the error
energy identity \eqref{eq: error assimilation equality distribution}. In the
synchronisation argument in \autoref{sec:synchronisation}, the unknown is the
assimilation error $w(t)=v(t)-u(t)$. Here we write $z\in V$ for a generic
possible value of this error and consider the feedback-modified quadratic form 
$$
a_{\mu,\delta}(z,z)=a_\alpha(z,z)
+\mu\|P_\delta T_\Gamma z\|_{Y_\delta}^2 .
$$
The first term is the usual Navier-slip dissipation, while the second term
penalises the observed tangential trace on $\Gamma$. The parameter $\mu$
measures the feedback strength, whereas $\delta$ represents the observation
resolution.

The goal is to prove a lower bound
$$
a_{\mu,\delta}(z,z)\geq \kappa\|z\|_{\rL^2(\Omega)}^2,
\qquad z\in V,
$$
for some $\kappa>0$. Such a bound means that the feedback dissipation controls
the full $\rL^2$ size of the error. This is the key input in the
synchronisation argument: testing the error equation by $w$ gives formally
$$
\frac{1}{2}\frac{\d}{\d t}\|w(t)\|_{\rL^2(\Omega)}^2
+a_{\mu,\delta}(w(t),w(t))
\leq \text{non-linear terms},
$$
so coercivity turns the feedback contribution into linear damping.

We introduce three spectral gaps, corresponding respectively to the actual
finite-dimensional feedback, the idealised full-trace feedback, and the
limiting mixed-boundary constraint. The Rayleigh quotient associated with the
actual feedback-modified form is
$$
\kappa_{ \mu, \delta} := \inf_{ z \in V \setminus \{ 0 \}}
\frac{ a_{ \mu, \delta }( z , z )}{ \| z \|_{ \rL^2( \Omega) }^2 } .
$$
This is the spectral gap available to the data assimilation system at feedback
strength $\mu$ and observation resolution $\delta$. Proving a positive lower
bound for $\kappa_{\mu,\delta}$ is exactly what is needed to obtain linear
damping in the error equation.

To analyse this gap, for fixed $\mu>0$ we introduce the full-trace
boundary-penalised form
$$
a_{\mu,\Gamma}(z,z) := a_\alpha(z,z)+\mu\|T_\Gamma z\|_{ \rL^2 ( \Gamma)}^2.
$$
This is the idealised version of the feedback form in which the full
tangential trace on $\Gamma$ is available instead of the finite-dimensional
observation $P_\delta T_\Gamma z$. Its first Rayleigh quotient is
$$
\kappa_{\mu,\Gamma} :=
\inf_{z\in V\setminus \{ 0 \}} \frac{ a_{ \mu, \Gamma}(z,z)}{\| z \|_{ \rL^2( \Omega)}^2}.
$$
Equivalently, $\kappa_{\mu,\Gamma}$ is the largest number such that
$$
a_{\mu,\Gamma}(z,z)
\geq
\kappa_{\mu,\Gamma}\|z\|_{\rL^2(\Omega)}^2
\qquad
\text{for all }z\in V.
$$

Finally, the limiting mixed-boundary gap is
$$
\kappa_{ \infty, \Gamma} := \inf_{ \substack { z \in V \setminus \{ 0 \}\\ T_\Gamma z=0} }
\frac{a_\alpha(z,z)}{ \| z \|_{ \rL^2( \Omega) }^2}.
$$
The constraint $T_\Gamma z=0$ is the limiting effect of the penalty
$\mu\|T_\Gamma z\|_{\rL^2(\Gamma)}^2$ as $\mu\to\infty$: modes with non-zero
trace on $\Gamma$ become increasingly expensive, so bounded-energy sequences
are forced towards fields with vanishing tangential trace on $\Gamma$.
Accordingly, $\kappa_{\infty,\Gamma}$ is the limiting spectral benchmark for
the full-trace boundary-feedback mechanism.

Since every $z\in V$ already satisfies $z\cdot n=0$ on $\partial\Omega$, the
additional constraint $T_\Gamma z=0$ forces the full trace of $z$ to vanish on
$\Gamma$. At the level of the associated variational problem, this corresponds
to a mixed-boundary configuration with a homogeneous Dirichlet condition on
$\Gamma$ and the original Navier-slip condition on
$\partial\Omega\setminus\Gamma$.

The first result below establishes that the limiting mixed-boundary gap $\kappa_{\infty, \Gamma}$ is
strictly positive, and consists in the first part of the statement of \autoref{thm:main_boundary_feedback_spectral_gaps}. We then show, in \autoref{subsec: penalisation and observation limits}, that the full-trace gap
$\kappa_{\mu,\Gamma}$ increases to $\kappa_{\infty,\Gamma}$ as
$\mu\to\infty$, and that, for each fixed $\mu>0$, the discrete gap
$\kappa_{\mu,\delta}$ converges to $\kappa_{\mu,\Gamma}$ as the observation
resolution tends to zero. Combining these two approximation results yields the
feedback-induced coercivity estimate stated in
\autoref{prop:feedback_induced_coercivity}.
\begin{lem}[Viscosity scaling of the limiting gap]
\label{lem:small_viscosity_kappa_scaling}
There exist positive constants
$c_\Gamma = c_\Gamma (\Omega, \Gamma),\, C_\Gamma =  C_\Gamma (\Omega, \Gamma)$ such that, for every $\nu>0$ and every $\alpha\geq0$,
\begin{equation}
c_\Gamma\nu
\leq
\kappa_{\infty,\Gamma}
\leq
C_\Gamma\nu.
\label{eq:small_viscosity_kappa_scaling}
\end{equation}
\end{lem}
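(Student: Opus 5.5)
Both bounds come from the fact that, on the constraint set $\{z\in V:\ T_\Gamma z=0\}$, the boundary friction term vanishes only partially, and the form is controlled from above and below by $\nu$ times a quadratic form that does not depend on $\nu$ or $\alpha$.

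\emph{Upper bound.} Since $\alpha\|z_\tau\|^2_{\rL^2(\partial\Omega)}$ is not killed by the constraint (only the part on $\Gamma$ vanishes), we avoid it by choosing test fields supported away from $\partial\Omega$. Fix a nonzero $\phi\in \rC^\infty_c(\Omega;\R^2)$ with $\nabla\cdot\phi=0$, for instance $\phi=\nabla^\perp\psi$ with $\psi\in \rC^\infty_c(\Omega)$ nonconstant. Then $\phi\in V$, $T_\Gamma\phi=0$ and $\phi_\tau=0$ on all of $\partial\Omega$. Hence
$$
\kappa_{\infty,\Gamma}\le \frac{2\nu\|D(\phi)\|^2_{\rL^2(\Omega)}}{\|\phi\|^2_{\rL^2(\Omega)}}=:C_\Gamma\nu ,
$$
with $C_\Gamma$ depending only on $\Omega$ (hence on $(\Omega,\Gamma)$), uniformly in $\alpha\ge0$.

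\emph{Lower bound.} Since $\alpha\ge0$, we have $a_\alpha(z,z)\ge 2\nu\|D(z)\|^2_{\rL^2(\Omega)}$. It therefore suffices to prove the Korn--Poincar\'e type inequality
$$
\|z\|^2_{\rL^2(\Omega)}\le C\|D(z)\|^2_{\rL^2(\Omega)}\qquad\text{for all } z\in V \text{ with } T_\Gamma z=0,
$$
which gives $c_\Gamma=2/C$. We argue by contradiction via compactness, using Korn's inequality of \autoref{lem:korn_compact_trace}, namely $\|z\|_{\rH^1}^2\le C(\|D(z)\|^2_{\rL^2}+\|z\|^2_{\rL^2})$. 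Suppose there are $z_k$ in the constraint set with $\|z_k\|_{\rL^2}=1$ and $\|D(z_k)\|_{\rL^2}\to0$. Then $(z_k)$ is bounded in $\rH^1$, so along a subsequence $z_k\rightharpoonup z$ weakly in $V$ and $z_k\to z$ strongly in $\rL^2(\Omega)$. By compactness of the trace map $\rH^1(\Omega)\to\rL^2(\partial\Omega)$, also $T_\Gamma z=0$. Lower semicontinuity gives $D(z)=0$ and $\|z\|_{\rL^2}=1$.

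\emph{Main obstacle.} The remaining step is the rigidity statement: a field with $D(z)=0$ vanishing on $\Gamma$ must be zero. Since $D(z)=0$ on the connected set $\Omega$, $z(x)=a+bx^\perp$ is an infinitesimal rigid motion. Its full trace vanishes on $\Gamma$, because the tangential part vanishes by the constraint and the normal part by impermeability. If $b\neq0$, then $z$ vanishes only at the single point $-a/b$ (after identifying appropriately). If $b=0$, then $z$ is constant and vanishes only if $a=0$. Since $\Gamma$ is a nonempty relatively open subset of the smooth curve $\partial\Omega$, it contains infinitely many points, so $a=0$ and $b=0$. Thus $z=0$, contradicting $\|z\|_{\rL^2}=1$.

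The constants are independent of $\nu$ and $\alpha$ because the lower bound dropped the $\alpha$ term and both bounds are linear in $\nu$. Strict positivity is part of the lower bound, which is needed later in \autoref{thm:main_boundary_feedback_spectral_gaps}.
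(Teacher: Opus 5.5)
Your proposal is correct and follows the paper's proof. The upper bound uses the same test field $\nabla^\perp\psi$ with $\psi\in\rC_c^\infty(\Omega)$, which kills the friction term on all of $\partial\Omega$. The lower bound drops the $\alpha$-term and reduces to a Korn--Poincar\'e inequality on $\{z\in V: T_\Gamma z=0\}$, proved by compactness together with the rigidity of infinitesimal rigid motions vanishing on $\Gamma$. The only difference is minor: the paper cites \autoref{lem:korn_poincare_partial_dirichlet}, which normalises in $\rH^1$ and upgrades to strong $\rH^1$ convergence to obtain an $\rH^1$ bound. You normalise in $\rL^2$ instead, so weak lower semicontinuity of $\|D(\cdot)\|_{\rL^2}$ suffices, and this is all that is needed here.
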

\begin{proof}
Set
$$
V_\Gamma:=\{z\in V:T_\Gamma z=0\}.
$$
Since every $z\in V_\Gamma$ has zero full trace on the non-empty relatively
open boundary part $\Gamma$, the Korn-Poincar\'e inequality with partial
Dirichlet boundary condition from \autoref{lem:korn_poincare_partial_dirichlet} gives a constant $C_{\mathrm{KP},\Gamma}>0$ such
that
$$
\|z\|_{\rL^2(\Omega)}^2 \leq C_{\mathrm{KP},\Gamma}\|D(z)\|_{\rL^2(\Omega)}^2
\qquad\text{for all }z\in V_\Gamma.
$$
Therefore
$$
a_\alpha(z,z)
\geq
2\nu\|D(z)\|_{\rL^2(\Omega)}^2
\geq
\frac{2\nu}{C_{\mathrm{KP},\Gamma}}
\|z\|_{\rL^2(\Omega)}^2,
$$
which proves the lower bound in \eqref{eq:small_viscosity_kappa_scaling}.

For the upper bound, choose a non-zero $\psi\in \rC_c^\infty(\Omega)$ and set
$z_0=\nabla^\perp\psi$. Then $z_0\in V$, its trace vanishes on the whole
boundary, and hence $T_\Gamma z_0=0$ and $(z_0)_\tau=0$ on $\partial\Omega$.
Thus
$$
\kappa_{\infty,\Gamma}
\leq
\frac{a_\alpha(z_0,z_0)}{\|z_0\|_{\rL^2(\Omega)}^2}
=
2\nu
\frac{\|D(z_0)\|_{\rL^2(\Omega)}^2}
{\|z_0\|_{\rL^2(\Omega)}^2},
$$
which is the required upper bound.
\end{proof}

\subsection{Penalisation and observation limits}
\label{subsec: penalisation and observation limits}
We now relate the three spectral gaps. First, we let $\mu \to \infty$ in the full-trace problem. Then, for fixed $\mu > 0$, we let $\delta \to 0$ to recover the full-trace gap from the discrete observations.

\begin{lem}[Infinite-feedback limit of the full-trace gap]
\label{lem:continuous_boundary_penalization_limit}
The map $\mu\mapsto\kappa_{\mu,\Gamma}$ is nondecreasing and
$$
\kappa_{ \mu,\Gamma} \uparrow \kappa_{\infty, \Gamma} \qquad \text{as } \mu \to \infty .
$$
\end{lem}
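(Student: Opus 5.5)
Monotonicity and the upper bound $\kappa_{\mu,\Gamma}\le\kappa_{\infty,\Gamma}$ are immediate, so the real content is the lower bound in the limit, which we obtain by a compactness argument.

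\emph{Monotonicity and upper bound.} For every fixed $z\in V$, the map $\mu\mapsto a_{\mu,\Gamma}(z,z)$ is nondecreasing. Taking the infimum over $z$ shows that $\mu\mapsto\kappa_{\mu,\Gamma}$ is nondecreasing. If $z\in V\setminus\{0\}$ satisfies $T_\Gamma z=0$, the penalty term vanishes, so $a_{\mu,\Gamma}(z,z)=a_\alpha(z,z)$. Hence $\kappa_{\mu,\Gamma}\le\kappa_{\infty,\Gamma}$ for every $\mu$, and the limit $\kappa_*:=\lim_{\mu\to\infty}\kappa_{\mu,\Gamma}\le\kappa_{\infty,\Gamma}$ exists. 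By \autoref{lem:small_viscosity_kappa_scaling}, $\kappa_{\infty,\Gamma}$ is finite.

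\emph{Lower bound by compactness.} Take $\mu_k\to\infty$. For each $k$ choose $z_k\in V$ with $\|z_k\|_{\rL^2(\Omega)}=1$ and
$$a_{\mu_k,\Gamma}(z_k,z_k)\le\kappa_{\mu_k,\Gamma}+1/k\le\kappa_{\infty,\Gamma}+1.$$
This gives three bounds:
\begin{itemize}
\item $2\nu\|D(z_k)\|^2_{\rL^2(\Omega)}$ is bounded;
\item $\alpha\|(z_k)_\tau\|^2_{\rL^2(\partial\Omega)}$ is bounded;
\item $\|T_\Gamma z_k\|^2_{\rL^2(\Gamma)}\le(\kappa_{\infty,\Gamma}+1)/\mu_k\to0$.
\end{itemize}
Korn's inequality from \autoref{lem:korn_compact_trace}, in the form $\|z\|_{\rH^1}^2\le C(\|D(z)\|^2_{\rL^2}+\|z\|^2_{\rL^2})$ on $V$, shows that $(z_k)$ is bounded in $V$.

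Extract a subsequence with $z_k\rightharpoonup z$ weakly in $V$. Rellich's theorem gives $z_k\to z$ strongly in $\rL^2(\Omega)$, and compactness of the trace $\rH^1(\Omega)\to\rL^2(\partial\Omega)$ gives $(z_k)_\tau\to z_\tau$ strongly in $\rL^2(\partial\Omega)$. Consequently:
\begin{itemize}
\item $\|z\|_{\rL^2(\Omega)}=1$, so $z\neq0$;
\item $T_\Gamma z=\lim T_\Gamma z_k=0$ in $\rL^2(\Gamma)$, so $z$ is admissible for $\kappa_{\infty,\Gamma}$;
\item $z\in V$, since $V$ is weakly closed.
\end{itemize}

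\emph{Conclusion.} The map $z\mapsto\|D(z)\|^2_{\rL^2}$ is convex and continuous on $V$, hence weakly lower semicontinuous. The boundary friction term converges by strong trace convergence. Dropping the nonnegative penalty term, we get
$$\kappa_{\infty,\Gamma}\le a_\alpha(z,z)\le\liminf_k a_\alpha(z_k,z_k)\le\liminf_k a_{\mu_k,\Gamma}(z_k,z_k)\le\kappa_*.$$
Combined with the upper bound, $\kappa_*=\kappa_{\infty,\Gamma}$, and monotonicity gives $\kappa_{\mu,\Gamma}\uparrow\kappa_{\infty,\Gamma}$.

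\emph{Main obstacle.} The delicate step is the uniform $V$-bound, because $a_\alpha$ controls only the symmetric gradient, and when $\alpha=0$ rigid motions are in its kernel. This is resolved by the $\rL^2$ normalisation together with the Korn inequality with lower-order term. The other point to check is the compactness of the tangential trace, which we take from \autoref{lem:korn_compact_trace}.
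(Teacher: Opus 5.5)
Your proposal is correct and follows essentially the same route as the paper. You get monotonicity and $\kappa_{\mu,\Gamma}\le\kappa_{\infty,\Gamma}$ from the definitions, then take an $\rL^2$-normalised almost-minimising sequence, bound it in $V$ via Korn's second inequality, use compactness of $V\hookrightarrow\rL^2(\Omega)$ and of $T_\Gamma$ to obtain a limit with $T_\Gamma z=0$ and $\|z\|_{\rL^2(\Omega)}=1$, and conclude by weak lower semicontinuity of $a_\alpha$; you also make explicit the finiteness of $\kappa_{\infty,\Gamma}$ (needed for the uniform bound $\kappa_{\infty,\Gamma}+1$), which the paper leaves implicit.
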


\begin{proof}
The monotonicity follows from the definition of $a_{\mu,\Gamma}$. Moreover
$$
\kappa_{\mu,\Gamma}\leq \kappa_{\infty,\Gamma}
$$
for every $\mu>0$, since the penalisation term vanishes on every admissible
function satisfying $T_\Gamma z=0$.

Let $\mu_j\to\infty$ and choose $z_j\in V$ such that
$$
\|z_j\|_{\rL^2(\Omega)}=1,
\qquad
a_{\mu_j,\Gamma}(z_j,z_j)
\leq
\kappa_{\mu_j,\Gamma}+\frac1j
\leq
\kappa_{\infty,\Gamma}+1 .
$$
By \autoref{lem:korn_compact_trace}, $(z_j)$ is bounded in $V$. After extracting a
subsequence,
$$
z_j\rightharpoonup z \quad\text{weakly in }V,
\qquad
z_j\to z \quad\text{strongly in }\rL^2(\Omega),
$$
and therefore $\|z\|_{\rL^2(\Omega)}=1$. Also,
$$
\mu_j\|T_\Gamma z_j\|_{\rL^2(\Gamma)}^2
\leq
\kappa_{\infty,\Gamma}+1,
$$
so $T_\Gamma z_j\to0$ in $\rL^2(\Gamma)$. By the compactness of $T_\Gamma$,
$T_\Gamma z_j\to T_\Gamma z$ strongly in $\rL^2(\Gamma)$ along the same
subsequence. Hence $T_\Gamma z=0$.

Using the lower semicontinuity of $a_\alpha$ and the choice of $z_j$, we obtain
$$
\liminf_{j\to\infty}\kappa_{\mu_j,\Gamma}
\geq
\liminf_{j\to\infty}a_{\mu_j,\Gamma}(z_j,z_j)
\geq
a_\alpha(z,z)
\geq
\kappa_{\infty,\Gamma}\|z\|_{\rL^2(\Omega)}^2
=
\kappa_{\infty,\Gamma}.
$$
Together with $\kappa_{\mu,\Gamma}\leq\kappa_{\infty,\Gamma}$, this concludes the proof.
\end{proof}
\noindent 
We next pass from the full-trace boundary penalisation to the actual discrete
observation term. The following consequence of \autoref{cond:dense_boundary_observations} will be used in the compactness
argument: if $\delta_j\to0$ and $f_j\to f$ in $\rL^2(\Gamma)$, then
\begin{equation}
\label{eq:observation norm_convergence}
\|P_{\delta_j}f_j\|_{Y_{\delta_j}}
\to
\|f\|_{\rL^2(\Gamma)}.
\end{equation}
Indeed,
$$
\left|
\|P_{\delta_j}f_j\|_{Y_{\delta_j}}-
\|f\|_{\rL^2(\Gamma)}
\right|
\leq
C_P\|f_j-f\|_{\rL^2(\Gamma)}+
\left|
\|P_{\delta_j}f\|_{Y_{\delta_j}}-
\|f\|_{\rL^2(\Gamma)}
\right|,
$$
and both terms on the right-hand side vanish as $j\to\infty$.
\begin{lem}[Dense-observation limit of the discrete feedback gap]
\label{lem:discrete_gap_convergence}
Assume \autoref{cond:dense_boundary_observations}. Fix $\mu>0$. Then
$$
\lim_{\delta \to 0^+} \kappa_{ \mu, \delta} = \kappa_{ \mu ,\Gamma}.
$$
\end{lem}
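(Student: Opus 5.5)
We prove the two inequalities $\limsup_{\delta\to0^+}\kappa_{\mu,\delta}\leq\kappa_{\mu,\Gamma}$ and $\liminf_{\delta\to0^+}\kappa_{\mu,\delta}\geq\kappa_{\mu,\Gamma}$ separately, in the same spirit as the proof of \autoref{lem:continuous_boundary_penalization_limit}. Throughout, $\mu>0$ is fixed.

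\emph{Upper bound.} Fix $\varepsilon>0$ and pick $z\in V$ with $\|z\|_{\rL^2(\Omega)}=1$ and $a_{\mu,\Gamma}(z,z)\leq\kappa_{\mu,\Gamma}+\varepsilon$. Since $T_\Gamma z\in\rL^2(\Gamma)$, \autoref{cond:dense_boundary_observations} gives $\|P_\delta T_\Gamma z\|_{Y_\delta}^2\to\|T_\Gamma z\|_{\rL^2(\Gamma)}^2$ as $\delta\to0$. Hence $a_{\mu,\delta}(z,z)\to a_{\mu,\Gamma}(z,z)$. Because $\kappa_{\mu,\delta}\leq a_{\mu,\delta}(z,z)$, we get $\limsup_{\delta\to0}\kappa_{\mu,\delta}\leq\kappa_{\mu,\Gamma}+\varepsilon$, and $\varepsilon$ is arbitrary.

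\emph{Lower bound.} Take any sequence $\delta_j\to0$ along which $\kappa_{\mu,\delta_j}$ converges to $\liminf_{\delta\to0}\kappa_{\mu,\delta}$. Choose $z_j\in V$ with $\|z_j\|_{\rL^2(\Omega)}=1$ and $a_{\mu,\delta_j}(z_j,z_j)\leq\kappa_{\mu,\delta_j}+1/j$. The upper bound shows that $\kappa_{\mu,\delta_j}$ is bounded, and the feedback term is nonnegative, so $a_\alpha(z_j,z_j)$ is bounded. Korn's inequality (\autoref{lem:korn_compact_trace}), in the form $a_\alpha(z,z)+C_0\|z\|_{\rL^2}^2\geq c_0\|z\|_V^2$, then shows that $(z_j)$ is bounded in $V$. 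After passing to a subsequence we have:
\begin{itemize}
\item $z_j\rightharpoonup z$ weakly in $V$;
\item $z_j\to z$ strongly in $\rL^2(\Omega)$, so $\|z\|_{\rL^2(\Omega)}=1$;
\item $T_\Gamma z_j\to T_\Gamma z$ strongly in $\rL^2(\Gamma)$, by compactness of the tangential trace from \autoref{lem:korn_compact_trace}.
\end{itemize}
From the third point and \eqref{eq:observation norm_convergence}, $\|P_{\delta_j}T_\Gamma z_j\|_{Y_{\delta_j}}^2\to\|T_\Gamma z\|_{\rL^2(\Gamma)}^2$. Weak lower semicontinuity of the convex continuous form $a_\alpha$ on $V$ gives $\liminf_j a_\alpha(z_j,z_j)\geq a_\alpha(z,z)$. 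Combining the two,
$$
\liminf_{\delta\to0}\kappa_{\mu,\delta}=\lim_j\kappa_{\mu,\delta_j}\geq\liminf_j a_{\mu,\delta_j}(z_j,z_j)\geq a_{\mu,\Gamma}(z,z)\geq\kappa_{\mu,\Gamma}.
$$

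\emph{Main obstacle.} The key step is passing to the limit in the feedback term $\|P_{\delta_j}T_\Gamma z_j\|_{Y_{\delta_j}}^2$, where both the operator and its argument vary. Weak convergence of the traces would not suffice, because the norms $\|P_\delta\cdot\|_{Y_\delta}$ need not be lower semicontinuous uniformly in $\delta$. This is why strong convergence of $T_\Gamma z_j$, obtained from the compact trace embedding, together with the uniform bound $C_P$ behind \eqref{eq:observation norm_convergence}, is essential. The remaining steps are standard.
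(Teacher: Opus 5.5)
Your proposal is correct and follows the paper's proof essentially step for step. The upper bound comes from a fixed near-minimiser and pointwise convergence of $\|P_\delta T_\Gamma z\|_{Y_\delta}$. The lower bound comes from Korn-based boundedness in $V$, compactness of $T_\Gamma$ giving strong trace convergence, the consequence \eqref{eq:observation norm_convergence}, and weak lower semicontinuity of $a_\alpha$. The only cosmetic difference is that you get boundedness of $\kappa_{\mu,\delta_j}$ from the already-proved $\limsup$ bound (valid for large $j$), whereas the paper uses $C_P$ with a fixed test function.
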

\begin{proof}
\emph{Step $1$: $\liminf_{\delta\to0^+}\kappa_{\mu,\delta}
\geq
\kappa_{\mu,\Gamma}$.} Let $\delta_j\to0$ be such that $
\kappa_{\mu,\delta_j}
\to
\liminf_{\delta\to0^+}\kappa_{\mu,\delta}.$ For each $j$, choose $z_j\in V$ with
$$
\| z_j \|_{ \rL^2(\Omega)}=1, \qquad a_{\mu, \delta_j}( z_j, z_j) \leq \kappa_{ \mu, \delta_j}+ \frac{1}{j} .
$$
The sequence $(\kappa_{\mu,\delta_j})$ is bounded from above. Indeed,
for any fixed $z_0\in V\setminus\{0\}$, the uniform boundedness of
$P_\delta$ in \autoref{cond:dense_boundary_observations} gives
$$
\kappa_{ \mu, \delta_j} \leq \frac{ a_{ \mu, \delta_j}( z_0 , z_0)}{ \| z_0 \|_{ \rL^2 ( \Omega ) }^2}
\leq \frac{
a_\alpha (z_0 , z_0 ) + \mu C_P^2 \| T_\Gamma z_0 \|_{ \rL^2(\Gamma ) }^2
}{ \| z_0 \|_{ \rL^2 ( \Omega) }^2
}.
$$
Hence $(a_{\mu,\delta_j}(z_j,z_j))$ is bounded. Since
$$
a_\alpha(z_j,z_j)\leq a_{\mu,\delta_j}(z_j,z_j),
$$
and $\|z_j\|_{\rL^2(\Omega)}=1$, \autoref{lem:korn_compact_trace} implies that
$(z_j)$ is bounded in $V$. Therefore, thanks to the reflexivity of the Hilbert space $V$ and the compact embedding $V \hookrightarrow \rL^2(\Omega),$ up to a subsequence we have
$$
z_j\rightharpoonup z  \quad  \text{weakly in }V, \qquad z_j \to z
\quad\text{strongly in }\rL^2(\Omega).
$$
In particular, $\|z\|_{\rL^2(\Omega)}=1.$ Moreover, by the compactness of the trace map in \autoref{lem:korn_compact_trace}, we can also assume
$$
T_\Gamma z_j \to  T_\Gamma z
\qquad \text{strongly in } \rL^2(\Gamma).
$$
Using \autoref{cond:dense_boundary_observations}, and in particular the
consequence (see \eqref{eq:observation norm_convergence})
$$
f_j \to f \text{ in } \rL^2(\Gamma), \ \delta_j \to0 \quad \Longrightarrow \quad
\| P_{\delta_j} f_j \|_{Y_{\delta_j}}
\to \| f \|_{\rL^2(\Gamma)},
$$
with $f_j=T_\Gamma z_j$ and $f=T_\Gamma z$, we obtain $ \| P_{\delta_j} T_\Gamma z_j \|_{ Y_{\delta_j}}
\to \| T_\Gamma z\|_{\rL^2(\Gamma)}.$
By weak lower semi-continuity of $a_\alpha$ we get
$$
a_\alpha(z,z) \leq \liminf_{j\to\infty} a_\alpha(z_j,z_j).
$$
Therefore,
\begin{equation*}
\begin{split}
a_{\mu,\Gamma}(z,z) &= a_\alpha( z ,z) + \mu \| T_\Gamma z \|_{ \rL^2( \Gamma)}^2\leq \liminf_{ j \to \infty} a_\alpha( z_j, z_j) +
\mu \lim_{ j \to \infty} \|P_{ \delta_j } T_\Gamma z_j \|_{Y_{\delta_j}}^2
\\ &\leq \liminf_{j\to\infty} \left(
a_\alpha(z_j,z_j) + \mu
\|P_{\delta_j}T_\Gamma z_j\|_{Y_{\delta_j}}^2
\right) = \liminf_{j\to\infty} a_{ \mu, \delta_j}( z_j, z_j) .
\end{split}
\end{equation*}
Since $\| z \|_{\rL^2(\Omega)}=1$, this gives
$$
\kappa_{\mu,\Gamma}
\leq
a_{\mu,\Gamma}(z,z)
\leq
\liminf_{j\to\infty}a_{\mu,\delta_j}(z_j,z_j)
\leq
\liminf_{j\to\infty}\left(\kappa_{\mu,\delta_j}+\frac1j\right).
$$
In conclusion $ \kappa_{\mu, \Gamma} \leq \liminf_{ \delta \to 0^+ }\kappa_{ \mu, \delta}$.

\emph{Step $2$: $\limsup_{\delta\to0^+}\kappa_{\mu,\delta}
\leq
\kappa_{\mu,\Gamma}.$} Let $\varepsilon>0$. By the definition of
$\kappa_{\mu,\Gamma}$, there exists $z_\varepsilon\in V$ such that
$$
\| z_\varepsilon\|_{ \rL^2 ( \Omega)} = 1, \qquad a_{\mu,\Gamma}(z_\varepsilon , z_\varepsilon) \leq \kappa_{ \mu, \Gamma} + \varepsilon.
$$
For this fixed function $z_\varepsilon$, \autoref{cond:dense_boundary_observations}
implies
$$
\| P_\delta T_\Gamma z_\varepsilon \|_{Y_\delta}^2 \to \| T_\Gamma  z_\varepsilon \|_{ \rL^2( \Gamma) }^2
\qquad \text{as }\delta \to 0^+.
$$
Hence
$$
a_{\mu,\delta}(z_\varepsilon,z_\varepsilon) = a_{\alpha} (z_\varepsilon, z_\varepsilon) + \mu \| P_{\delta} T_{\Gamma} z_{\varepsilon} \|_{Y_{\delta}}^2
\to  a_{\alpha} (z_\varepsilon, z_\varepsilon) + \mu  \| T_\Gamma  z_\varepsilon \|_{ \rL^2( \Gamma) }^2 = 
a_{\mu,\Gamma}(z_\varepsilon,z_\varepsilon).
$$
By definition of
$\kappa_{\mu,\delta}$, we obtain
$$
\kappa_{\mu,\delta} \leq a_{\mu,\delta}(z_\varepsilon,z_\varepsilon).
$$
In conclusion
$$
\limsup_{ \delta \to0^ + } \kappa_{ \mu, \delta} \leq a_{ \mu, \Gamma}(z_\varepsilon , z_\varepsilon) \leq
\kappa_{ \mu, \Gamma} + \varepsilon.
$$
Since $\varepsilon>0$ is arbitrary, we have proved $
\limsup_{ \delta \to0^ + } \kappa_{ \mu, \delta} \leq \kappa_{\mu,\Gamma}.$
\end{proof}

\subsection{Feedback-induced coercivity}
\noindent 
We can now combine the full-trace approximation with the discrete
convergence of the observations to obtain the coercivity estimate needed later. This proves the second part of \autoref{thm:main_boundary_feedback_spectral_gaps}.
\begin{prop}[Feedback-induced coercivity]
\label{prop:feedback_induced_coercivity}
Assume \autoref{cond:dense_boundary_observations}. Let
$0<\kappa<\kappa_{\infty,\Gamma}$. Then there exist $\mu_0>0$ and
$\delta_0>0$ such that, for every $\mu\geq\mu_0$ and every
$0<\delta\leq\delta_0$,
\begin{equation}
a_{\mu,\delta}(z,z) \geq \kappa \| z\|_{ \rL^2(\Omega)}^2
\qquad \text{for all }z \in V.
\label{eq: feedback induced coercivity}
\end{equation}
\end{prop}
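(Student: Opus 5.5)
The plan is to combine the two approximation lemmas with the monotonicity of the feedback form in $\mu$. The only subtlety is uniformity: the estimate must hold simultaneously for \emph{all} $\mu\geq\mu_0$ and all $0<\delta\leq\delta_0$. \autoref{lem:discrete_gap_convergence}, however, only gives a limit in $\delta$ for each fixed $\mu$. Monotonicity in $\mu$ will remove this obstacle, so I expect no genuine difficulty.

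First, fix $0<\kappa<\kappa_{\infty,\Gamma}$. By \autoref{lem:continuous_boundary_penalization_limit}, $\kappa_{\mu,\Gamma}\uparrow\kappa_{\infty,\Gamma}$ as $\mu\to\infty$. Hence there is $\mu_0>0$ with $\kappa_{\mu_0,\Gamma}>\kappa$; choose an intermediate value $\kappa'$ with $\kappa<\kappa'<\kappa_{\mu_0,\Gamma}$.

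Second, apply \autoref{lem:discrete_gap_convergence} with this fixed $\mu_0$. Since $\kappa_{\mu_0,\delta}\to\kappa_{\mu_0,\Gamma}$ as $\delta\to0^+$, there is $\delta_0>0$ such that $\kappa_{\mu_0,\delta}\geq\kappa'>\kappa$ for every $0<\delta\leq\delta_0$. By the definition of $\kappa_{\mu_0,\delta}$ as an infimum of Rayleigh quotients, this means
$$
a_{\mu_0,\delta}(z,z)\geq\kappa\|z\|_{\rL^2(\Omega)}^2\qquad\text{for all }z\in V,\ 0<\delta\leq\delta_0 .
$$

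Third, extend the estimate to all larger feedback strengths. For $\mu\geq\mu_0$ and any $z\in V$, the feedback term is non-negative, so
$$
a_{\mu,\delta}(z,z)=a_{\mu_0,\delta}(z,z)+(\mu-\mu_0)\|P_\delta T_\Gamma z\|_{Y_\delta}^2\geq a_{\mu_0,\delta}(z,z).
$$
Combining this with the previous step yields \eqref{eq: feedback induced coercivity} for every $\mu\geq\mu_0$ and every $0<\delta\leq\delta_0$. In this argument $\delta_0$ depends only on $\mu_0$ (hence only on $\kappa$), not on $\mu$, which is exactly the uniformity required.
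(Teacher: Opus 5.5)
Your proposal is correct and follows essentially the same route as the paper. You choose $\mu_0$ with $\kappa_{\mu_0,\Gamma}>\kappa$ via \autoref{lem:continuous_boundary_penalization_limit}, obtain $\delta_0$ from \autoref{lem:discrete_gap_convergence} at that fixed $\mu_0$, and pass to all $\mu\geq\mu_0$ by non-negativity of the feedback term; the intermediate value $\kappa'$ is a harmless extra margin that the paper does not use.
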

\begin{proof}
By \autoref{lem:continuous_boundary_penalization_limit}, we can choose
$\mu_0>0$ such that
$$
\kappa_{\mu_0,\Gamma}>\kappa.
$$
Then \autoref{lem:discrete_gap_convergence} gives $\delta_0>0$ such that
$$
\kappa_{\mu_0,\delta} \geq \kappa
 \qquad    \text{for all } 0 < \delta \leq \delta_0 .
$$
If $\mu\geq\mu_0$, then
$$
a_{\mu,\delta}(z,z) \geq a_{\mu_0,\delta} (z , z) \qquad \text{for all } z \in V,
$$
and the estimate \eqref{eq: feedback induced coercivity} follows.
\end{proof}

\begin{rem}
The number $\kappa_{\infty,\Gamma}$ is the maximal spectral gap attainable by
this boundary-feedback mechanism. Indeed, for every $z\in V$ satisfying
$T_\Gamma z=0$,
\[
a_{\mu,\delta}(z,z)=a_\alpha(z,z),
\]
and therefore
\[
\kappa_{\mu,\delta}
\leq
\inf_{\substack{z\in V\setminus\{0\}\\T_\Gamma z=0}}
\frac{a_\alpha(z,z)}{\|z\|_{\rL^2(\Omega)}^2}
=
\kappa_{\infty,\Gamma}.
\]
Moreover, \autoref{lem:continuous_boundary_penalization_limit} and
\autoref{lem:discrete_gap_convergence} give the iterated limit
\[
\lim_{\mu\to\infty}\,
\lim_{\delta\to0^+}
\kappa_{\mu,\delta}
=
\kappa_{\infty,\Gamma}.
\]
Thus, by first choosing the feedback strength sufficiently large and then
taking sufficiently fine observations, the actual feedback gap can be made
arbitrarily close to, but cannot exceed, the limiting mixed-boundary gap.
\end{rem}

\section{Exponential synchronisation and concrete regimes}
\label{sec:synchronisation}
\noindent 
We now combine the coercivity generated by the boundary feedback from
\autoref{prop:feedback_induced_coercivity} with the non-linear energy balance
for the error equation in \eqref{eq: error assimilation equality distribution}.
The feedback term provides linear damping of the error, whereas the non-linear
term is controlled by the long-time averaged symmetric-gradient energy of the reference solution.
We first formulate this abstract argument and then give concrete sufficient
conditions under which the required averaged smallness condition is satisfied.

\subsection{Abstract synchronisation theorem}
\noindent
Let $C_4>0$ be a constant in the two-dimensional Ladyzhenskaya inequality
\begin{equation}
\| z \|_{ \rL^4( \Omega) }^2 \leq
C_4 \| z \|_{ \rL^2(\Omega)} \| z \|_{ \rH^1( \Omega)}
 \qquad \text{for all } z \in \rH^1( \Omega; \R^2).
\label{eq:ladyzhenskaya}
\end{equation}
For $\kappa>0$ and $\nu>0$, we recall that in \eqref{eq:def_theta_kappa_nu_intro} we have introduced
\begin{equation*}
\Theta_{\kappa,\nu}
=
C_4^2 C_{\mathrm{Korn}}
\left(\kappa^{-1}+(2\nu)^{-1}\right),
\end{equation*}
where $C_{\mathrm{Korn}}$ is the viscosity-independent constant in Korn inequality, see
\eqref{eq:korn_second_squared} in \autoref{lem:korn_compact_trace}.

We start with a basic non-linear bound for the error equation. The only
point requiring some care is that the Navier-slip form $a_\alpha$ may fail to
control the full $\rH^1$ norm when undamped rigid motions are present. The
feedback-induced coercivity estimate supplies the missing $\rL^2$ control.

\begin{lem}[Non-linear error estimate]
\label{lem:nonlinear_error_estimate}
Assume that for some $\kappa>0$
\begin{equation}
a_{\mu,\delta} (z , z ) \geq
\kappa \| z \|_{ \rL^2(\Omega)}^2
 \qquad \text{ for all } z \in V.
\label{coercivity hp for synchronisation}
\end{equation}
Let $u$ be the solution of \eqref{eq: 2D nse} and let $w=v-u$ be the
assimilation error satisfying \eqref{eq: error_equation}. Then, in the sense of
distributions on $(0,\infty)$,
\begin{equation}
\label{eq:error_differential_ineq}
\frac{\d}{\d t} \| w(t) \|_{ \rL^2(\Omega)}^2
+ \left( \kappa- \Theta_{\kappa,\nu}\|D(u(t))\|_{ \rL^2(\Omega)}^2 \right)
\| w ( t ) \|_{ \rL^2(\Omega)}^2
\leq 0 .
\end{equation}
\end{lem}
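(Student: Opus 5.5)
The plan is to start from the energy identity \eqref{eq: error assimilation equality distribution} of \autoref{thm:wp_error_equation}. It holds on every interval $(0,T)$, hence in $\mathcal D'(0,\infty)$, and the map $t\mapsto\|w(t)\|_{\rL^2(\Omega)}^2$ is absolutely continuous. It therefore suffices to prove the pointwise-in-time inequality
$$
\tfrac12\tfrac{\d}{\d t}\|w\|_{\rL^2(\Omega)}^2+\tfrac12 a_{\mu,\delta}(w,w)\le \tfrac12\Theta_{\kappa,\nu}\|D(u)\|_{\rL^2(\Omega)}^2\|w\|_{\rL^2(\Omega)}^2
$$
for a.e.\ $t$, and then to apply \eqref{coercivity hp for synchronisation}. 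Multiplying by $2$ and using $a_{\mu,\delta}(w,w)\ge\kappa\|w\|_{\rL^2(\Omega)}^2$ gives \eqref{eq:error_differential_ineq}.

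\textbf{Step 1 (the non-linear term).} Since $(w\cdot\nabla)u\cdot w=\sum_{i,j}w_iw_j\partial_iu_j$ and $w_iw_j$ is symmetric in $(i,j)$, we have pointwise $(w\cdot\nabla)u\cdot w=w^TD(u)w$. By H\"older and the Ladyzhenskaya inequality \eqref{eq:ladyzhenskaya},
$$
\Bigl|\int_\Omega (w\cdot\nabla)u\cdot w\d x\Bigr|\le \|D(u)\|_{\rL^2}\|w\|_{\rL^4}^2\le C_4\|D(u)\|_{\rL^2}\|w\|_{\rL^2}\|w\|_{\rH^1}.
$$
Only $D(u)$ appears here, which matches the definition of $M_u^D$.

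\textbf{Step 2 (controlling $\|w\|_{\rH^1}$ by $a_{\mu,\delta}$).} This is the only delicate point, because $a_\alpha$ alone need not control $\|w\|_{\rH^1}$ when rigid motions are present. I will use the Korn inequality \eqref{eq:korn_second_squared} of \autoref{lem:korn_compact_trace} in the form
$$
\|z\|_{\rH^1}^2\le C_{\mathrm{Korn}}\bigl(\|D(z)\|_{\rL^2}^2+\|z\|_{\rL^2}^2\bigr),\qquad z\in V.
$$
The two terms are bounded separately. Non-negativity of the friction and feedback terms gives $\|D(z)\|_{\rL^2}^2\le a_{\mu,\delta}(z,z)/(2\nu)$. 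The coercivity hypothesis \eqref{coercivity hp for synchronisation} gives $\|z\|_{\rL^2}^2\le a_{\mu,\delta}(z,z)/\kappa$. Writing $S:=\kappa^{-1}+(2\nu)^{-1}$, this yields
$$
\|w\|_{\rH^1}^2\le C_{\mathrm{Korn}}S\,a_{\mu,\delta}(w,w).
$$
This explains exactly why the constant $\Theta_{\kappa,\nu}=C_4^2C_{\mathrm{Korn}}S$ appears.

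\textbf{Step 3 (Young's inequality).} Apply Young's inequality to the bound of Step 1 with weights chosen so that the $\rH^1$ part is absorbed by half of the dissipation:
$$
C_4\|D(u)\|\|w\|_{\rL^2}\|w\|_{\rH^1}\le \frac{\|w\|_{\rH^1}^2}{2C_{\mathrm{Korn}}S}+\frac{C_4^2C_{\mathrm{Korn}}S}{2}\|D(u)\|^2\|w\|_{\rL^2}^2\le \tfrac12a_{\mu,\delta}(w,w)+\tfrac12\Theta_{\kappa,\nu}\|D(u)\|^2\|w\|_{\rL^2}^2 .
$$
Inserting this into the energy identity gives the displayed inequality above, and the conclusion follows as described. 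The main (mild) obstacle is Step 2: the coercivity hypothesis is exactly what allows the $\|w\|_{\rL^2}^2$ part of Korn's inequality to be absorbed without any Poincar\'e-type assumption on $a_\alpha$. One must also check that the form of \eqref{eq:korn_second_squared} is the inhomogeneous Korn inequality on $V$ stated above.
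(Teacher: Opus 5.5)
Your proposal is correct and follows the paper's proof essentially step for step. Both use the pointwise identity $(w\cdot\nabla)u\cdot w=w^TD(u)w$, then H\"older with Ladyzhenskaya, then Korn's inequality \eqref{eq:korn_second_squared} combined with the coercivity hypothesis to get $\|w\|_{\rH^1}^2\le C_{\mathrm{Korn}}(\kappa^{-1}+(2\nu)^{-1})\,a_{\mu,\delta}(w,w)$, and finally Young's inequality to absorb half of the dissipation before invoking coercivity again. Your explicit Young weights, and your check that \eqref{eq:korn_second_squared} holds on all of $\rH^1(\Omega;\R^2)$ and hence on $V$, only make the paper's constant $\Theta_{\kappa,\nu}$ slightly more transparent.
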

\begin{proof}
The starting point is the energy identity
\eqref{eq: error assimilation equality distribution} from
\autoref{thm:wp_error_equation}, namely
$$
\frac{1}{2} \frac{\d}{\d t} \| w ( t ) \|_{ \rL^2(\Omega)}^2
+ a_{ \mu , \delta}( w(t) , w(t))
= - \int_\Omega (w(t) \cdot \nabla ) u( t ) \cdot w(t) \d x.
$$
Write
$$
\nabla u=D(u)+A(u),
\qquad
A(u):=\frac{\nabla u-(\nabla u)^T}{2}.
$$
Since $A(u)$ is antisymmetric, $w^TA(u)w=0$ pointwise. Therefore
$$
(w\cdot\nabla)u\cdot w
=
w^T\nabla u\,w
=
w^TD(u)w.
$$
By H\"older's inequality and \eqref{eq:ladyzhenskaya},
\begin{equation}
    \left| \int_\Omega (w\cdot\nabla) u \cdot w \d x \right|
\leq
\|D(u)\|_{\rL^2(\Omega)}\|w\|_{\rL^4(\Omega)}^2
\leq
C_4\|D(u)\|_{\rL^2(\Omega)}
\|w\|_{\rL^2(\Omega)}\|w\|_{\rH^1(\Omega)}.
\label{eq: bound non_linear term lemma}
\end{equation}
Moreover, by \eqref{eq:korn_second_squared},
$a_{\mu,\delta}(w,w)\geq a_\alpha(w,w)\geq
2\nu\|D(w)\|_{\rL^2(\Omega)}^2$, and
\eqref{coercivity hp for synchronisation},
$$
\begin{aligned}
\|w\|_{\rH^1(\Omega)}^2
&\leq
C_{\mathrm{Korn}}
\left(
\|w\|_{\rL^2(\Omega)}^2
+
\|D(w)\|_{\rL^2(\Omega)}^2
\right) \leq
C_{\mathrm{Korn}}
\left(\kappa^{-1}+(2\nu)^{-1}\right)
a_{\mu,\delta}(w,w).
\end{aligned}
$$
Hence
$$
\left| \int_\Omega (w \cdot \nabla) u \cdot w\d x \right|
\leq
\Theta_{\kappa,\nu}^{1/2}
\|D(u)\|_{\rL^2(\Omega)}
\|w\|_{\rL^2(\Omega)}
a_{\mu,\delta}(w,w)^{1/2}.
$$
Young's inequality gives
$$
\left| \int_\Omega (w \cdot \nabla) u\cdot w \d x \right|
\leq
\frac{1}{2}a_{\mu,\delta}(w,w)
+
\frac{1}{2}\Theta_{\kappa,\nu}
\|D(u)\|_{\rL^2(\Omega)}^2
\|w\|_{\rL^2(\Omega)}^2.
$$
Substituting this estimate into the energy identity and using
\eqref{coercivity hp for synchronisation} once more gives
\eqref{eq:error_differential_ineq}.
\end{proof}
\noindent
We recall that in \eqref{eq:def_MuD} we introduced the long-time average of the squared symmetric gradient
\begin{equation*}
M_u^D
:=
\limsup_{t\to\infty}
\frac{1}{t}
\int_0^t
\|D(u(s))\|_{\rL^2(\Omega)}^2\d s.
\end{equation*}
\begin{rem}[Only the symmetric gradient enters the error production]
The quantity $M_u^D$ is the natural non-linear threshold because the
antisymmetric part of $\nabla u$ does not contribute to the error energy. In
particular, a rigid rotation may satisfy
$$
D(u)=0 \qquad \text{while} \qquad
\nabla u\neq0.
$$
For example, if $\alpha=0$ and $\Omega$ is a disk or a concentric annulus, the
field $u(x)=\omega x^\perp$ is compatible with the impermeability and
perfect-slip boundary conditions. It contributes nothing to
$\int_\Omega (w\cdot\nabla)u\cdot w\d x$, although a criterion based on
$\|\nabla u\|_{\rL^2}^2$ would count it as destabilising.
\end{rem}
\noindent 
We are now in a position to prove our main synchronisation result from \autoref{thm:synchronisation}. We recall, that it applies when the feedback
damping is stronger, on average, than the term generated by the symmetric gradient.
\begin{proof}[Proof of \autoref{thm:synchronisation}]
Since $0<\kappa < \kappa_{ \infty, \Gamma}$,
\autoref{prop:feedback_induced_coercivity} gives $\mu_0>0$ and $\delta_0>0$
such that
$$
a_{\mu,\delta}(z,z) \geq \kappa \| z \|_{ \rL^2(\Omega) }^2
\qquad \text{for all } z \in V
$$
provided that $\mu \geq \mu_0$ and $0<\delta\leq\delta_0$. Therefore
\autoref{lem:nonlinear_error_estimate} applies. Taking
$$
E(t)=\|w(t)\|_{\rL^2(\Omega)}^2,
\qquad
q(t)=\|D(u(t))\|_{\rL^2(\Omega)}^2,
\qquad
a=\kappa,
\qquad
b=\Theta_{\kappa,\nu},
$$
the condition $\Theta_{\kappa,\nu}M_u^D<\kappa$ is exactly
\eqref{eq: abstract averaged damping hp}. The conclusion follows from
\autoref{lem: modified gronwall}, after taking square roots.
\end{proof}
\noindent 
We next deduce \autoref{cor:unforced_synchronisation}, which gives synchronisation for the unforced case $g = 0$. Its proof is based on showing that, thanks to $g = 0$, the contribution given by the symmetric gradient vanishes when we compute the long-time average.
\begin{proof}[Proof of \autoref{cor:unforced_synchronisation}]
The energy inequality for the reference solution gives
$$
\frac{1}{2} \| u(T) \|_{\rL^2(\Omega)}^2 +
\int_0^T a_\alpha ( u (t) , u(t))\d t \leq 
\frac{1}{2} \| u_0 \|_{ \rL^2(\Omega)}^2.
$$
Since $a_\alpha(u,u)\geq2\nu\|D(u)\|_{\rL^2(\Omega)}^2$, it follows that
$$
\frac{1}{T}\int_0^T\|D(u(t))\|_{\rL^2(\Omega)}^2\d t
\leq
\frac{\|u_0\|_{\rL^2(\Omega)}^2}{4\nu T}
\to 0.
$$
Thus $M_u^D=0$, and the non-linear gap condition for the symmetric gradient term holds for every
$\kappa\in(0,\kappa_{\infty,\Gamma})$.
\end{proof}

\subsection{Large-viscosity regimes}
\noindent 
We now discuss two situations in which the condition \eqref{eq:non-linear_gap_condition} can be
ensured by taking the viscosity sufficiently large. The first covers the case
where viscosity itself has no rigid-motion obstruction. The second covers the
case $\alpha>0$, where the boundary friction damps rigid motions.

\subsubsection*{Large viscosity without tangential rigid motions}
\noindent 
The following geometric condition excludes rigid motions compatible with the
impermeability constraint
\begin{equation}
\label{eq:no_tangential_rigid_motions}
\left \lbrace  z(x) = a + \omega x^\perp:
a \in\mathbb{R}^2, \, \omega \in \R, \, z \cdot n=0 \text{ on } \partial \Omega
\right \rbrace = \{ 0 \}.
\end{equation}
\noindent 
Condition \eqref{eq:no_tangential_rigid_motions} removes the kernel of the symmetric gradient on $V$ and therefore yields the Korn-Poincar\'e estimate used in the following result.
\begin{prop}[Large viscosity without tangential rigid motions]
\label{prop:large_viscosity_synchronisation}
Assume \autoref{cond:dense_boundary_observations} and
\eqref{eq:no_tangential_rigid_motions}. Let $C_\Omega$ be the constant in
\eqref{eq:korn_poincare_from_no_rigid_motions}. If
\begin{equation}
\nu^4 >\frac{C_4^2 C_\Omega^3}{8} \|g\|_{\rL^2(\Omega)}^2,
\label{eq: large viscosity hp}
\end{equation}
then synchronisation holds in the following sense: for this fixed $\nu$, there
exist $\mu_0=\mu_0(\nu)>0$ and $\delta_0=\delta_0(\nu)>0$ such that the
conclusion of \autoref{thm:synchronisation} holds for every $\mu\geq\mu_0$ and
every $0<\delta\leq\delta_0$.
\end{prop}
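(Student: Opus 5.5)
The plan is to bound the averaged symmetric-gradient energy $M_u^D$ explicitly in terms of $\|g\|_{\rL^2(\Omega)}$ and $\nu$ using the Korn--Poincar\'e inequality of \autoref{lem:korn_poincare_no_rigid_motions}, and then close the error estimate. Under \eqref{eq:no_tangential_rigid_motions}, I take \eqref{eq:korn_poincare_from_no_rigid_motions} in the form $\|z\|_{\rH^1(\Omega)}^2\leq C_\Omega\|D(z)\|_{\rL^2(\Omega)}^2$ for $z\in V$. In particular this gives $\|z\|_{\rL^2(\Omega)}^2\leq C_\Omega\|D(z)\|_{\rL^2(\Omega)}^2$ (if the lemma only states the $\rL^2$ version, I would first upgrade it with Korn's inequality \eqref{eq:korn_second_squared}, at the cost of the constant).

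\emph{Step 1: bound on $M_u^D$.} In the energy inequality \eqref{eq: energy inequality 2DNSE} I use $a_\alpha(u,u)\geq 2\nu\|D(u)\|^2$. For the forcing term I estimate $(g,u)\leq \|g\|\,C_\Omega^{1/2}\|D(u)\|\leq \nu\|D(u)\|^2+\frac{C_\Omega}{4\nu}\|g\|^2$. Absorbing the first term, I get $\nu\int_0^t\|D(u)\|^2\d s\leq \frac12\|u_0\|^2+\frac{C_\Omega t}{4\nu}\|g\|^2$. Dividing by $t$ and letting $t\to\infty$ yields
$$M_u^D\leq \frac{C_\Omega}{4\nu^2}\|g\|_{\rL^2(\Omega)}^2<\infty .$$

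\emph{Step 2: size of the available gap.} For every $z\in V$, and hence in particular on $\{T_\Gamma z=0\}$, I have $a_{\mu,\delta}(z,z)\geq a_\alpha(z,z)\geq 2\nu\|D(z)\|^2\geq \frac{2\nu}{C_\Omega}\|z\|^2$. Thus $\kappa_{\infty,\Gamma}\geq 2\nu/C_\Omega$, and every $\kappa<2\nu/C_\Omega$ is admissible in \autoref{prop:feedback_induced_coercivity}, which supplies $\mu_0(\nu)$ and $\delta_0(\nu)$.

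\emph{Step 3: closing the estimate (main obstacle).} Plugging Steps 1--2 into the generic $\Theta_{\kappa,\nu}$ condition \eqref{eq:non-linear_gap_condition} would produce a threshold involving $C_{\mathrm{Korn}}$, not the stated $C_4^2C_\Omega^3/8$. So I would instead redo \autoref{lem:nonlinear_error_estimate} with the Korn--Poincar\'e inequality in place of Korn's inequality. By \eqref{eq: bound non_linear term lemma},
$$\Bigl|\int_\Omega (w\cdot\nabla)u\cdot w\d x\Bigr|\leq C_4C_\Omega^{1/2}\|D(u)\|\,\|w\|\,\|D(w)\|\leq \nu\|D(w)\|^2+\frac{C_4^2C_\Omega}{4\nu}\|D(u)\|^2\|w\|^2 .$$
Inserting this into \eqref{eq: error assimilation equality distribution}, I would keep $2a_{\mu,\delta}(w,w)-2\nu\|D(w)\|^2\geq a_{\mu,\delta}(w,w)\geq \kappa\|w\|^2$. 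This gives
$$\frac{\d}{\d t}\|w\|^2+\Bigl(\kappa-\frac{C_4^2C_\Omega}{2\nu}\|D(u)\|^2\Bigr)\|w\|^2\leq 0,$$
which has the same structure as \eqref{eq:error_differential_ineq} but with $b=C_4^2C_\Omega/(2\nu)$. By Step 1, the averaged condition of \autoref{lem: modified gronwall} reduces to $\frac{C_4^2C_\Omega^2}{8\nu^3}\|g\|^2<\kappa$ for some $\kappa<2\nu/C_\Omega$. Such a $\kappa$ exists exactly when $\nu^4>\frac{C_4^2C_\Omega^3}{16}\|g\|^2$, a condition implied by \eqref{eq: large viscosity hp}. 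I expect the only delicate point to be this bookkeeping of constants, including the factor $\tfrac12$ in the energy identity.

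\emph{Step 4: conclusion.} With $a=\kappa$ and $q=\|D(u)\|^2$, \autoref{lem: modified gronwall} gives exponential decay of $\|w\|^2$ with constants independent of $v_0$, $\mu\geq\mu_0$ and $\delta\leq\delta_0$. Taking square roots then yields the conclusion of \autoref{thm:synchronisation}. If one instead insists on verifying \eqref{eq:non-linear_gap_condition} literally, the same Step 1 bound applies, but the threshold changes to one involving $C_{\mathrm{Korn}}$.
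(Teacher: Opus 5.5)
Your proposal is correct and follows essentially the paper's proof. Both arguments use the same bound $M_u^D\leq C_\Omega\|g\|^2/(4\nu^2)$, the same lower bound $\kappa_{\infty,\Gamma}\geq 2\nu/C_\Omega$, and the same Korn--Poincar\'e-based redo of the non-linear estimate giving the differential inequality with $b=C_4^2C_\Omega/(2\nu)$, closed by \autoref{lem: modified gronwall}. The only differences are cosmetic: you start from the integrated energy inequality rather than a formal differential inequality, and you let $\kappa$ approach $2\nu/C_\Omega$ instead of fixing $\kappa_\nu=\nu/C_\Omega$, which correctly shows that the weaker threshold $\nu^4>C_4^2C_\Omega^3\|g\|^2/16$ already suffices.
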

\begin{proof}
Since $\alpha\geq0$,
$$
a_\alpha(z,z)
=
2\nu\|D(z)\|_{\rL^2(\Omega)}^2
+
\alpha\|z_\tau\|_{\rL^2(\partial\Omega)}^2
\geq
2\nu\|D(z)\|_{\rL^2(\Omega)}^2.
$$
Using \eqref{eq:korn_poincare_from_no_rigid_motions}, we obtain
$$
a_\alpha(z,z)
\geq
\frac{2\nu}{C_\Omega}\|z\|_{\rL^2(\Omega)}^2
\qquad\text{for all }z\in V.
$$
Thus $\kappa_{\infty,\Gamma}\geq2\nu/C_\Omega$. We choose
$$
\kappa_\nu:=\frac{\nu}{C_\Omega}.
$$
Then $0<\kappa_\nu<\kappa_{\infty,\Gamma}$, and
\autoref{prop:feedback_induced_coercivity} gives, for this fixed $\nu$,
constants $\mu_0(\nu)>0$ and $\delta_0(\nu)>0$ such that
$$
a_{\mu,\delta}(z,z)
\geq
\kappa_\nu\|z\|_{\rL^2(\Omega)}^2
\qquad\text{for all }z\in V
$$
whenever $\mu\geq\mu_0(\nu)$ and $0<\delta\leq\delta_0(\nu)$.

We next estimate the reference solution. Testing \eqref{eq: 2D nse}$_1$ with
$u$, using \eqref{eq:korn_poincare_from_no_rigid_motions} and
$a_\alpha(u,u)\geq2\nu\|D(u)\|_{\rL^2}^2$, we get
$$
(g,u)_{\rL^2}
\leq
\|g\|_{\rL^2}\|u\|_{\rL^2}
\leq
C_\Omega^{1/2}\|g\|_{\rL^2}\|D(u)\|_{\rL^2}
\leq
\frac{1}{2}a_\alpha(u,u)
+
\frac{C_\Omega}{4\nu}\|g\|_{\rL^2}^2.
$$
Therefore
$$
\frac{\d}{\d t}\|u(t)\|_{\rL^2(\Omega)}^2
+
a_\alpha(u(t),u(t))
\leq
\frac{C_\Omega}{2\nu}\|g\|_{\rL^2(\Omega)}^2.
$$
It follows that
$$
\limsup_{t\to\infty}\frac{1}{t}
\int_0^t a_\alpha(u(s),u(s))\d s
\leq
\frac{C_\Omega}{2\nu}\|g\|_{\rL^2(\Omega)}^2.
$$
Since $a_\alpha(u,u)\geq2\nu\|D(u)\|_{\rL^2}^2$, we obtain
\begin{equation}
M_u^D
\leq
\frac{C_\Omega}{4\nu^2}\|g\|_{\rL^2(\Omega)}^2.
\label{eq:MuD_large_viscosity_bound_aux}
\end{equation}

It remains to use the viscosity-dependent control of the error. By
\eqref{eq:korn_poincare_from_no_rigid_motions},
\begin{equation}
\|w\|_{\rH^1(\Omega)}^2
\leq
C_\Omega\|D(w)\|_{\rL^2(\Omega)}^2
\leq
\frac{C_\Omega}{2\nu}a_{\mu,\delta}(w,w).
\label{eq: H1 bound with a mu delta}
\end{equation}
Repeating the same argument as in the proof of \autoref{lem:nonlinear_error_estimate}, we get to \eqref{eq: bound non_linear term lemma}, that is
$$
    \left| \int_\Omega (w\cdot\nabla) u \cdot w \d x \right|
\leq
\|D(u)\|_{\rL^2(\Omega)}\|w\|_{\rL^4(\Omega)}^2
\leq
C_4\|D(u)\|_{\rL^2(\Omega)}
\|w\|_{\rL^2(\Omega)}\|w\|_{\rH^1(\Omega)}.
$$
Applying to this last estimate the bound in \eqref{eq: H1 bound with a mu delta} and Young's inequality, we obtain
$$
\left|\int_\Omega(w\cdot\nabla)u\cdot w\d x\right|
\leq
\frac{1}{2}a_{\mu,\delta}(w,w)
+
\frac{C_4^2C_\Omega}{4\nu}
\|D(u)\|_{\rL^2(\Omega)}^2
\|w\|_{\rL^2(\Omega)}^2.
$$
The error energy identity and
$a_{\mu,\delta}(w,w)\geq\kappa_\nu\|w\|_{\rL^2}^2$ then give
$$
\frac{\d}{\d t}\|w(t)\|_{\rL^2(\Omega)}^2
+
\left(
\kappa_\nu
-
\frac{C_4^2C_\Omega}{2\nu}
\|D(u(t))\|_{\rL^2(\Omega)}^2
\right)
\|w(t)\|_{\rL^2(\Omega)}^2
\leq0.
$$
By \eqref{eq:MuD_large_viscosity_bound_aux}, the averaged damping condition is
satisfied if
$$
\frac{C_4^2C_\Omega}{2\nu}
\frac{C_\Omega}{4\nu^2}
\|g\|_{\rL^2(\Omega)}^2
<
\frac{\nu}{C_\Omega},
$$
which is exactly \eqref{eq: large viscosity hp}. The conclusion follows from
\autoref{lem: modified gronwall}.
\end{proof}
\begin{rem}
Assumption \eqref{eq:no_tangential_rigid_motions} says that viscosity damps
every admissible velocity field. It excludes the classical obstruction given by
rigid rotations. For instance, on a disk or on a concentric annulus, the field
$ x^\perp $ is tangent to the boundary and satisfies $D (x^\perp ) = 0$, so large
viscosity alone cannot damp that mode.
\end{rem}

\subsubsection*{Large viscosity with positive boundary friction}
\noindent 
We finally consider the case $\alpha>0$. In this regime the boundary friction
removes the rigid-motion obstruction, even in domains such as disks or
concentric annuli. This additional coercivity leads to the following large-viscosity synchronisation criterion.
\begin{prop}[Large viscosity with positive boundary friction]
\label{prop:large_viscosity_positive_alpha}
Assume \autoref{cond:dense_boundary_observations} and that $\alpha>0$, and fix $\underline{\nu}>0$. Let
$ \lambda_{ \alpha, \underline{\nu}}$ and $C_{\alpha, \underline{\nu}}$ be the
constants from \autoref{lem:positive_alpha_coercivity}. Choose any
$ \kappa \in( 0 , \lambda_{\alpha, \underline{\nu}})$. If
\begin{equation}
\nu >  \max \left \lbrace \frac{ C_4^2 C_{\alpha,\underline{\nu}}
}{ 2\kappa \lambda_{\alpha, \underline{\nu}}
}\| g \|_{\rL^2( \Omega)}^2, \underline{ \nu } \right \rbrace     ,
\label{eq:large_viscosity_positive_alpha_condition}
\end{equation}
then synchronisation holds in the following sense: for this fixed
$\nu$, there exist $\mu_0 = \mu_0(\nu)>0$ and
$\delta_0=\delta_0(\nu)>0$ such that the conclusion of
\autoref{thm:synchronisation} holds for every $\mu\geq\mu_0$ and every $0<\delta\leq\delta_0$.

In particular, taking
$\kappa=\lambda_{\alpha,\underline{\nu}}/2$, it is enough that
\begin{equation}
\nu
> \max \left \lbrace \frac{ C_4^2 C_{\alpha,\underline{\nu}}
}{ \lambda_{ \alpha , \underline{\nu}}^2
} \|g\|_{\rL^2(\Omega)}^2, \underline{ \nu} \right \rbrace   .
\label{eq:large_viscosity_positive_alpha_condition_simple}
\end{equation}
\end{prop}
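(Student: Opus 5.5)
The plan is to follow the proof of \autoref{prop:large_viscosity_synchronisation}, replacing the Korn-Poincar\'e inequality \eqref{eq:korn_poincare_from_no_rigid_motions} by the coercivity supplied by \autoref{lem:positive_alpha_coercivity}. We do not know the exact form of that lemma, so we assume it says the following. For $\alpha>0$ and $\nu\geq\underline\nu$,
\[
a_\alpha(z,z)\geq\lambda_{\alpha,\underline\nu}\|z\|_{\rL^2(\Omega)}^2
\qquad\text{and}\qquad
\|z\|_{\rH^1(\Omega)}^2\leq\frac{C_{\alpha,\underline\nu}}{\nu}\,a_\alpha(z,z)
\qquad\text{for all }z\in V.
\]
The natural proof of such a lemma uses friction to kill rigid motions: argue by contradiction, using Korn and compactness of the trace. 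The scaling $1/\nu$ in the second bound comes from $\nu\|D(z)\|^2+\alpha\|z_\tau\|^2\geq\underline\nu(\|D(z)\|^2+(\alpha/\underline\nu)\|z_\tau\|^2)$ together with the gap $\lambda$ being at least of size $\underline\nu$-independent. If the lemma is phrased slightly differently, the exponents below change but the scheme does not.

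\emph{Step 1 (coercivity of the feedback form).} Since $a_{\mu,\delta}\geq a_\alpha$, we have $\kappa_{\infty,\Gamma}\geq\lambda_{\alpha,\underline\nu}>\kappa$. \autoref{prop:feedback_induced_coercivity} then gives $\mu_0(\nu)$ and $\delta_0(\nu)$ such that
\[
a_{\mu,\delta}(z,z)\geq\kappa\|z\|^2_{\rL^2(\Omega)}
\qquad\text{for all }\mu\geq\mu_0,\ 0<\delta\leq\delta_0 .
\]

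\emph{Step 2 (bound on $M_u^D$).} Test the reference equation with $u$ and estimate
\[
(g,u)\leq\|g\|\,\lambda^{-1/2}a_\alpha(u,u)^{1/2}\leq\tfrac12 a_\alpha(u,u)+\tfrac{1}{2\lambda}\|g\|^2 .
\]
This gives $\frac{\d}{\d t}\|u\|^2+a_\alpha(u,u)\leq\lambda^{-1}\|g\|^2$. Averaging in time and using $a_\alpha\geq2\nu\|D(u)\|^2$ yields
\[
M_u^D\leq\frac{\|g\|^2}{2\nu\lambda_{\alpha,\underline\nu}} .
\]
This step is rigorous via the energy inequality \eqref{eq: energy inequality 2DNSE} together with Gronwall; it does not need the formal testing.

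\emph{Step 3 (error differential inequality).} Start, as in \autoref{lem:nonlinear_error_estimate}, from
\[
\Big|\int(w\cdot\nabla)u\cdot w\Big|\leq C_4\|D(u)\|\,\|w\|\,\|w\|_{\rH^1} .
\]
Use $\|w\|^2_{\rH^1}\leq(C_{\alpha,\underline\nu}/\nu)\,a_{\mu,\delta}(w,w)$ and Young's inequality to get
\[
\Big|\int(w\cdot\nabla)u\cdot w\Big|\leq\tfrac12 a_{\mu,\delta}(w,w)+\frac{C_4^2C_{\alpha,\underline\nu}}{2\nu}\|D(u)\|^2\|w\|^2 .
\]
Inserting this into \eqref{eq: error assimilation equality distribution} and using Step 1 gives
\[
\frac{\d}{\d t}\|w\|^2+\Big(\kappa-\frac{C_4^2C_{\alpha,\underline\nu}}{\nu}\|D(u)\|^2\Big)\|w\|^2\leq0 .
\]

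\emph{Step 4 (conclusion).} Apply \autoref{lem: modified gronwall} with $a=\kappa$ and $b=C_4^2C_{\alpha,\underline\nu}/\nu$. Its averaged condition $bM_u^D<\kappa$ follows from Step 2 provided
\[
\frac{C_4^2C_{\alpha,\underline\nu}}{2\nu^2\lambda}\|g\|^2<\kappa .
\]
This is implied by \eqref{eq:large_viscosity_positive_alpha_condition} for $\nu>\underline\nu$, possibly after adjusting powers of $\nu$ according to the exact scaling in \autoref{lem:positive_alpha_coercivity}. The threshold stated in \eqref{eq:large_viscosity_positive_alpha_condition} is linear in $\nu$, which suggests the lemma gives $\|z\|^2_{\rH^1}\leq C_{\alpha,\underline\nu}a_\alpha(z,z)$ with no $1/\nu$. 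With that version the same computation gives exactly $C_4^2C_{\alpha,\underline\nu}\|g\|^2/(2\nu\kappa\lambda)<1$, matching the statement. The simplified condition \eqref{eq:large_viscosity_positive_alpha_condition_simple} then follows by substituting $\kappa=\lambda/2$.

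The main obstacle is getting the constants in \autoref{lem:positive_alpha_coercivity} uniform in $\nu\geq\underline\nu$, so that the threshold is explicit. We expect this to hold because enlarging $\nu$ only increases $a_\alpha$. The rest is bookkeeping identical to the no-rigid-motion case.
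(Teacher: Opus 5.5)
Once you resolve your hedge in favour of the $\nu$-independent bound $\|z\|_{\rH^1(\Omega)}^2\leq C_{\alpha,\underline\nu}\,a_\alpha(z,z)$, your argument is the paper's proof: \autoref{lem:positive_alpha_coercivity} is stated with $2\underline\nu\|D(z)\|_{\rL^2(\Omega)}^2+\alpha\|z_\tau\|_{\rL^2(\partial\Omega)}^2$ on the right. For $\nu\geq\underline\nu$ this lower-bounds $a_\alpha(z,z)\leq a_{\mu,\delta}(z,z)$, which gives both $\kappa_{\infty,\Gamma}\geq\lambda_{\alpha,\underline\nu}$ and the $\rH^1$ control. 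Then $M_u^D\leq\|g\|_{\rL^2(\Omega)}^2/(2\nu\lambda_{\alpha,\underline\nu})$ and \autoref{lem: modified gronwall} with $b=C_4^2C_{\alpha,\underline\nu}$ yield exactly the stated threshold.

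Drop your first guess with a factor $1/\nu$ entirely. It is false in the disk/annulus case this proposition is meant to cover: for $z=x^\perp$ one has $a_\alpha(z,z)/\nu=\alpha\|z_\tau\|_{\rL^2(\partial\Omega)}^2/\nu\to0$ as $\nu\to\infty$, while $\|z\|_{\rH^1(\Omega)}$ stays fixed.
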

\begin{proof}
For every $\nu\geq\underline\nu$,
$$
a_\alpha(z,z) = 2\nu\|D(z)\|_{\rL^2(\Omega)}^2
+ \alpha \| z_\tau\|_{ \rL^2(\partial\Omega) }^2
\geq 2 \underline{\nu} \| D(z) \|_{\rL^2(\Omega)}^2 + \alpha \| z_\tau \|_{\rL^2( \partial \Omega)}^2  .
$$
Therefore \eqref{eq:positive_alpha_spectral_gap} implies
$$
a_\alpha(z,z) \geq \lambda_{ \alpha, \underline{\nu}} \| z \|_{ \rL^2( \Omega)}^2 \qquad \text{for all }z\in V.
$$
In particular,
$$
\kappa_{\infty,\Gamma} \geq \lambda_{\alpha, \underline {\nu}}.
$$
Since $\kappa<\lambda_{\alpha,\underline{\nu}}$, then \autoref{prop:feedback_induced_coercivity} gives, for this fixed $\nu$, constants $\mu_0(\nu)>0$ and
$\delta_0(\nu)>0$ such that
$$
a_{\mu , \delta } ( z , z) \geq \kappa \| z \|_{\rL^2(\Omega)}^2 \qquad \text{for all } z \in V
$$
whenever $\mu \geq \mu_0(\nu)$ and $0 < \delta \leq \delta_0(\nu)$.

We use the symmetric-gradient representation of the non-linear term with the $\rH^1$ coercivity available in the positive-friction regime. By
\autoref{lem:nonlinear_error_estimate},
$$
(w\cdot\nabla)u\cdot w=w^TD(u)w.
$$
Therefore,
$$
\left|
\int_\Omega (w\cdot\nabla)u\cdot w\,\d x
\right| \leq
\int_\Omega |D(u)|\,|w|^2 \d x \leq \|D(u)\|_{\rL^2(\Omega)} \|w\|_{\rL^4(\Omega)}^2 .
$$
By Ladyzhenskaya's inequality and
\eqref{eq:positive_alpha_h1_coercivity}
$$
\|w\|_{\rL^4(\Omega)}^2 \leq C_4 \| w \|_{\rL^2(\Omega)} \| w \|_{ \rH^1(\Omega)} \leq C_4 C_{ \alpha, \underline{\nu}}^{1/2}
 \| w \|_{\rL^2(\Omega)} a_{\mu,\delta}(w,w)^{1/2}.
$$
Hence, by Young's inequality
$$
\left| \int_\Omega (w \cdot \nabla) u \cdot w \d x \right|
\leq \frac{1}{2} a_{\mu,\delta}(w,w) + \frac{1}{2} C_4^2 C_{\alpha,\underline{\nu}} \| D(u) \|_{\rL^2(\Omega)}^2 \| w \|_{\rL^2(\Omega)}^2 .
$$
Using the error energy identity \eqref{eq: error assimilation equality distribution} and the feedback-induced coercivity estimate \eqref{eq: feedback induced coercivity}, we get
\begin{equation}
\label{eq:error_positive_alpha_large_viscosity}
\frac{\d}{\d t} \| w ( t ) \|_{\rL^2(\Omega)}^2
+ \left( \kappa - C_4^2 C_{\alpha, \underline{\nu}} \| D ( u ( t ) ) \|_{\rL^2(\Omega)}^2 \right) \|w(t)\|_{\rL^2(\Omega)}^2
\leq 0  .
\end{equation}

It remains to estimate $M_u^D$. Testing \eqref{eq: 2D nse}$_1$ with $u$ and
using \eqref{eq:positive_alpha_spectral_gap}, we obtain
$$
(g,u)_{\rL^2}
\leq
\frac{1}{2\lambda_{\alpha,\underline{\nu}}}\|g\|_{\rL^2}^2
+
\frac{\lambda_{\alpha,\underline{\nu}}}{2}\|u\|_{\rL^2}^2
\leq
\frac{1}{2\lambda_{\alpha,\underline{\nu}}}\|g\|_{\rL^2}^2
+
\frac{1}{2}a_\alpha(u,u).
$$
Therefore
$$
\frac{\d}{\d t}\|u(t)\|_{\rL^2(\Omega)}^2
+
a_\alpha(u(t),u(t))
\leq
\frac{1}{\lambda_{\alpha,\underline\nu}}
\|g\|_{\rL^2(\Omega)}^2.
$$
Since $a_\alpha(u,u)\geq2\nu\|D(u)\|_{\rL^2(\Omega)}^2$, it follows that
\begin{equation}
M_u^D
\leq
\frac{1}{2\nu\lambda_{\alpha,\underline\nu}}
\|g\|_{\rL^2(\Omega)}^2.
\label{eq:MuD_positive_alpha_bound}
\end{equation}
Thus the averaged damping condition for
\eqref{eq:error_positive_alpha_large_viscosity} is
$$
C_4^2C_{\alpha,\underline\nu}
\frac{1}{2\nu\lambda_{\alpha,\underline\nu}}
\|g\|_{\rL^2(\Omega)}^2
<\kappa,
$$
which is \eqref{eq:large_viscosity_positive_alpha_condition}. The conclusion
follows from \autoref{lem: modified gronwall}.
\end{proof}
\begin{rem}
The coefficient $\alpha \geq 0$ is the Navier boundary friction coefficient.
The case $\alpha=0$ corresponds to perfect slip: the boundary prevents normal
penetration but does not slow down motion along the boundary. The case $\alpha>0$ corresponds to
partial slip with friction and produces the boundary dissipation
$$
\alpha \|u_\tau\|_{\rL^2(\partial\Omega)}^2 .
$$
This distinction is important on domains such as disks and concentric annuli.
The rigid rotation $x^\perp$ satisfies $D(x^\perp)=0$, so viscosity alone
does not damp it. However, when $\alpha>0$,
$$
\alpha\|  (x^\perp )_\tau \|_{ \rL^2( \partial \Omega)}^2>0,
$$
and the boundary friction removes this obstruction.
\end{rem}

\subsection{A small-forcing criterion}
\noindent
We now give a simple criterion which guarantees synchronisation when the
forcing is sufficiently small. The criterion assumes that the unnudged
Navier-slip dissipation $a_\alpha$ controls the $\rL^2$ norm. This spectral gap
is used only to estimate the long-time averaged symmetric-gradient energy $M_u^D$ in terms of the
forcing.

\begin{prop}[A small-forcing sufficient condition]
\label{prop:small_forcing_synchronisation}
Assume \autoref{cond:dense_boundary_observations}, and let $u$ be the reference
solution of \eqref{eq: 2D nse} with $g\in\rL^2(\Omega)$. Let
$0<\kappa<\kappa_{\infty,\Gamma}$ and let $\Theta_{\kappa,\nu}$ be defined by
\eqref{eq:def_theta_kappa_nu_intro}. Assume moreover that $a_\alpha$ has a spectral
gap: there exists $\lambda_\alpha>0$ such that
\begin{equation}
a_\alpha(z,z)
\geq
\lambda_\alpha\|z\|_{\rL^2(\Omega)}^2
\qquad\text{for all }z\in V.
\label{eq:alpha_spectral_gap}
\end{equation}
Then
\begin{equation}
M_u^D
\leq
\frac{1}{2\nu\lambda_\alpha}
\|g\|_{\rL^2(\Omega)}^2.
\label{eq:MuD_bound}
\end{equation}
Consequently, if
\begin{equation}
\|g\|_{\rL^2(\Omega)}^2
<
\frac{2\nu\lambda_\alpha\kappa}{\Theta_{\kappa,\nu}}
\qquad\text{for some }\kappa\in(0,\kappa_{\infty,\Gamma}),
\label{eq:small_forcing_condition}
\end{equation}
then the synchronisation conclusion of \autoref{thm:synchronisation} holds.
\end{prop}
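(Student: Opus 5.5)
The proof has two parts. First, the energy inequality \eqref{eq: energy inequality 2DNSE} of \autoref{lem: well_posedness_reference} together with the spectral gap \eqref{eq:alpha_spectral_gap} gives the bound \eqref{eq:MuD_bound}. Second, that bound is fed into \autoref{thm:synchronisation}. The computation mirrors the estimate of $M_u^D$ in \autoref{prop:large_viscosity_positive_alpha}, with $\lambda_{\alpha,\underline\nu}$ replaced by $\lambda_\alpha$.

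For the first part, we use Cauchy--Schwarz, Young's inequality and \eqref{eq:alpha_spectral_gap} to bound the forcing term:
$$
(g,u)_{\rL^2(\Omega)} \leq \frac{1}{2\lambda_\alpha}\|g\|_{\rL^2(\Omega)}^2+\frac{\lambda_\alpha}{2}\|u\|_{\rL^2(\Omega)}^2 \leq \frac{1}{2\lambda_\alpha}\|g\|_{\rL^2(\Omega)}^2+\frac12 a_\alpha(u,u).
$$
We insert this pointwise in time into the integrated energy inequality \eqref{eq: energy inequality 2DNSE}. Working with the integrated form avoids differentiating $\|u\|^2$ for a merely weak solution. The result is
$$
\frac12\|u(t)\|_{\rL^2(\Omega)}^2+\frac12\int_0^t a_\alpha(u,u)\d s\leq \frac12\|u_0\|_{\rL^2(\Omega)}^2+\frac{t}{2\lambda_\alpha}\|g\|_{\rL^2(\Omega)}^2 .
$$
We drop the nonnegative first term, use $a_\alpha(u,u)\geq 2\nu\|D(u)\|_{\rL^2(\Omega)}^2$, and divide by $2\nu t$. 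This gives
$$
\frac1t\int_0^t\|D(u)\|_{\rL^2(\Omega)}^2\d s\leq \frac{\|u_0\|_{\rL^2(\Omega)}^2}{2\nu t}+\frac{1}{2\nu\lambda_\alpha}\|g\|_{\rL^2(\Omega)}^2 .
$$
Taking $\limsup_{t\to\infty}$ yields \eqref{eq:MuD_bound}, and in particular $M_u^D<\infty$. The integration over $[0,t]$ requires $T$ arbitrary in \autoref{lem: well_posedness_reference}; this is fine because the solutions on different intervals agree by uniqueness.

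For the second part, assume \eqref{eq:small_forcing_condition} holds for some $\kappa\in(0,\kappa_{\infty,\Gamma})$. Then \eqref{eq:MuD_bound} gives
$$
\Theta_{\kappa,\nu}M_u^D\leq \frac{\Theta_{\kappa,\nu}}{2\nu\lambda_\alpha}\|g\|_{\rL^2(\Omega)}^2<\kappa ,
$$
which is exactly \eqref{eq:non-linear_gap_condition}. Since $M_u^D<\infty$ and $u_0\in H$, all hypotheses of \autoref{thm:synchronisation} hold, and its conclusion follows directly.

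No step is a genuine obstacle. The only point needing care is the weak-solution justification of the energy step, which the integrated inequality \eqref{eq: energy inequality 2DNSE} handles.
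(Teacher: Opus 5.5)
Your proof is correct and follows the paper's argument. You use Young's inequality with the spectral gap to absorb half of $a_\alpha(u,u)$, time-average, apply $a_\alpha(u,u)\geq 2\nu\|D(u)\|_{\rL^2(\Omega)}^2$ to obtain \eqref{eq:MuD_bound}, and then check the gap condition to invoke \autoref{thm:synchronisation}; the only difference is that you work with the integrated energy inequality of the reference solution rather than the paper's formal differential energy identity, which is slightly more careful for weak solutions and gives the same bound.
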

\begin{proof}
Testing \eqref{eq: 2D nse}$_1$ with $u$ gives
$$
\frac{1}{2}\frac{\d}{\d t}\|u(t)\|_{\rL^2(\Omega)}^2
+
a_\alpha(u(t),u(t))
=
(g,u(t))_{\rL^2(\Omega)}.
$$
Using \eqref{eq:alpha_spectral_gap} and Young's inequality,
$$
(g,u)_{\rL^2(\Omega)}
\leq
\frac{1}{2\lambda_\alpha}\|g\|_{\rL^2(\Omega)}^2
+
\frac{\lambda_\alpha}{2}\|u\|_{\rL^2(\Omega)}^2
\leq
\frac{1}{2\lambda_\alpha}\|g\|_{\rL^2(\Omega)}^2
+
\frac{1}{2}a_\alpha(u,u).
$$
Therefore
$$
\frac{\d}{\d t}\|u(t)\|_{\rL^2(\Omega)}^2
+
a_\alpha(u(t),u(t))
\leq
\frac{1}{\lambda_\alpha}\|g\|_{\rL^2(\Omega)}^2.
$$
Integrating over $(0,T)$, dividing by $T$, and passing to the limsup gives
$$
\limsup_{T\to\infty}
\frac{1}{T}\int_0^T a_\alpha(u(s),u(s))\d s
\leq
\frac{1}{\lambda_\alpha}\|g\|_{\rL^2(\Omega)}^2.
$$
Since
$$
a_\alpha(u,u)
\geq
2\nu\|D(u)\|_{\rL^2(\Omega)}^2,
$$
we obtain \eqref{eq:MuD_bound}. Combining
\eqref{eq:small_forcing_condition} and \eqref{eq:MuD_bound},
$$
\Theta_{\kappa,\nu}M_u^D
\leq
\frac{\Theta_{\kappa,\nu}}{2\nu\lambda_\alpha}
\|g\|_{\rL^2(\Omega)}^2
<\kappa.
$$
Thus \eqref{eq:non-linear_gap_condition} holds, and the conclusion follows from
\autoref{thm:synchronisation}.
\end{proof}

\begin{rem}[Applicability of the small-forcing criterion]
The spectral-gap assumption \eqref{eq:alpha_spectral_gap} is a structural
coercivity property of $(\Omega,\alpha,\nu)$, not a smallness assumption on the
forcing. Its only role in \autoref{prop:small_forcing_synchronisation} is to
bound $M_u^D$ in terms of $\|g\|_{\rL^2}$.

The main cases are the following.
\begin{enumerate}
\item[(i)] If $\alpha>0$, boundary friction removes the rigid-motion kernel.
For every fixed $\underline\nu>0$ and every $\nu\geq\underline\nu$,
\autoref{lem:positive_alpha_coercivity} gives
$$
a_\alpha(z,z)
\geq
\lambda_{\alpha,\underline\nu}
\|z\|_{\rL^2(\Omega)}^2
\qquad\text{for all }z\in V.
$$
Thus \eqref{eq:alpha_spectral_gap} holds.

\item[(ii)] If $\alpha=0$ and
\eqref{eq:no_tangential_rigid_motions} holds, then
\autoref{lem:korn_poincare_no_rigid_motions} gives
$$
a_\alpha(z,z)
=
2\nu\|D(z)\|_{\rL^2(\Omega)}^2
\geq
\frac{2\nu}{C_\Omega}\|z\|_{\rL^2(\Omega)}^2,
$$
so one may take $\lambda_\alpha=2\nu/C_\Omega$.

\item[(iii)] If $\alpha = 0$, and $\mathcal{R}_{\Omega} \neq \{ 0 \}$, the un-nudged spectra-gap condition \eqref{eq:alpha_spectral_gap} fails. For example, on a disk or concentric annulus, the rigid rotation $z(x) = x^\perp$ satisfies
$$
z \cdot n = 0, \qquad D (z) = 0,
$$
and thus
$
a_0 (z,z) = 0.$ Therefore, \autoref{prop:small_forcing_synchronisation} cannot be applied because its auxiliary spectral-gap assumption fails. Nevertheless, the long-time averaged symmetric-gradient energy can be estimated after projecting away the rigid-motion component. The corresponding forced-flow criterion is given in \autoref{prop:perfect_slip_rigid_motions}. It includes the un-forced case and, more in general, forces of the form
$
g = r+ \nabla \phi , $ with $r \in \mathcal{R}_\Omega.
$

\end{enumerate}
\end{rem}

\subsection{Perfect slip in the presence of tangential rigid motions}
\label{subsec:perfect_slip_rigid_motions}

We finally consider the perfect-slip case $\alpha=0$ on a domain admitting
non-zero rigid motions compatible with the impermeability condition. This is
precisely the situation in which the unnudged dissipation has no positive
spectral gap. Nevertheless, the long-time averaged symmetric-gradient energy can be estimated
after separating the rigid and deformational components of the reference
solution.

Let
\begin{equation}
\mathcal R_\Omega
:=
\{r\in V:D(r)=0\},
\label{eq:rigid_motion_space}
\end{equation}
and let
\[
\Pi_{\mathcal R}:H\to\mathcal R_\Omega
\]
denote the $\rL^2(\Omega)$-orthogonal projection. We also denote by
\[
\mbbP_H:\rL^2(\Omega;\mathbb R^2)\to H
\]
the Helmholtz projection and set
\begin{equation}
g_\perp
:=
(I-\Pi_{\mathcal R})\mbbP_Hg.
\label{eq:nonrigid_solenoidal_force}
\end{equation}

\begin{prop}[Perfect slip with tangential rigid motions]
\label{prop:perfect_slip_rigid_motions}
Assume \autoref{cond:dense_boundary_observations}, let $\alpha=0$, and suppose
that $\mathcal R_\Omega\neq\{0\}$. Let $C_{\mathcal R}>0$ be the constant in
the projected Korn--Poincar\'e inequality
\eqref{eq:korn_poincare_modulo_rigid_motions}. Then the reference solution
satisfies
\begin{equation}
M_u^D
\leq
\frac{C_{\mathcal R}}{4\nu^2}
\|g_\perp\|_{\rL^2(\Omega)}^2.
\label{eq:MuD_rigid_motion_bound}
\end{equation}
Consequently, if for some
$\kappa\in(0,\kappa_{\infty,\Gamma})$,
\begin{equation}
\frac{C_{\mathcal R}\Theta_{\kappa,\nu}}{4\nu^2}
\|g_\perp\|_{\rL^2(\Omega)}^2
<\kappa,
\label{eq:rigid_motion_synchronisation_condition}
\end{equation}
then the synchronisation conclusion of
\autoref{thm:synchronisation} holds.
\end{prop}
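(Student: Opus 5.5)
The plan is to bound $M_u^D$ by an energy estimate that only sees the non-rigid part of $u$. Once \eqref{eq:MuD_rigid_motion_bound} holds, condition \eqref{eq:rigid_motion_synchronisation_condition} gives $\Theta_{\kappa,\nu}M_u^D<\kappa$, so \autoref{thm:synchronisation} applies directly. The estimate should hold for the weak solution with $u_0\in H$; I would test against $u$ (or against $u-\Pi_{\mathcal R}u$, which lies in $V$).

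\textbf{Step 1: reduce the forcing.} Test \eqref{eq: 2D nse}$_1$ with $u$, or use the energy inequality \eqref{eq: energy inequality 2DNSE}. With $\alpha=0$ this gives
\[
\frac12\frac{\d}{\d t}\|u\|_{\rL^2}^2+2\nu\|D(u)\|_{\rL^2}^2\leq (g,u)_{\rL^2}.
\]
Since $u(t)\in H$, we have $(g,u)=(\mbbP_Hg,u)$. Decompose $u=\Pi_{\mathcal R}u+(I-\Pi_{\mathcal R})u$. By orthogonality,
\[
(\mbbP_Hg,u)=(\Pi_{\mathcal R}\mbbP_Hg,\Pi_{\mathcal R}u)+(g_\perp,(I-\Pi_{\mathcal R})u).
\]
The rigid contribution $(\Pi_{\mathcal R}\mbbP_Hg,\Pi_{\mathcal R}u)$ cannot be absorbed by dissipation and may not be bounded. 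I would handle it through the evolution of the rigid component. Test the equation with $r\in\mathcal R_\Omega$. Since $D(r)=0$, we get $a_0(u,r)=0$. For the nonlinear term, $b(u,u,r)=-b(u,r,u)=-\int u^T\nabla r\,u=-\int u^TD(r)u=0$. Hence
\[
\frac{\d}{\d t}\Pi_{\mathcal R}u=\Pi_{\mathcal R}\mbbP_Hg=:g_{\mathcal R},
\]
so $\Pi_{\mathcal R}u(t)=\Pi_{\mathcal R}u_0+tg_{\mathcal R}$. This implies
\[
\int_0^t(g_{\mathcal R},\Pi_{\mathcal R}u)\,\d s=\tfrac12\|\Pi_{\mathcal R}u(t)\|^2-\tfrac12\|\Pi_{\mathcal R}u_0\|^2.
\]
This cancels exactly against the rigid part of $\frac12\|u(t)\|^2=\frac12\|\Pi_{\mathcal R}u\|^2+\frac12\|(I-\Pi_{\mathcal R})u\|^2$. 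We are left with
\[
\tfrac12\|(I-\Pi_{\mathcal R})u(t)\|^2+2\nu\int_0^t\|D(u)\|^2\leq\tfrac12\|(I-\Pi_{\mathcal R})u_0\|^2+\int_0^t(g_\perp,(I-\Pi_{\mathcal R})u)\,\d s.
\]

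\textbf{Step 2: absorb with projected Korn--Poincar\'e.} Inequality \eqref{eq:korn_poincare_modulo_rigid_motions} gives $\|(I-\Pi_{\mathcal R})u\|_{\rL^2}^2\leq C_{\mathcal R}\|D(u)\|_{\rL^2}^2$, using $D(\Pi_{\mathcal R}u)=0$. Young's inequality then yields
\[
(g_\perp,(I-\Pi_{\mathcal R})u)\leq C_{\mathcal R}^{1/2}\|g_\perp\|\,\|D(u)\|\leq\nu\|D(u)\|^2+\frac{C_{\mathcal R}}{4\nu}\|g_\perp\|^2.
\]
Absorbing the first term gives $\nu\int_0^t\|D(u)\|^2\leq\frac12\|u_0\|^2+\frac{C_{\mathcal R}t}{4\nu}\|g_\perp\|^2$. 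Dividing by $\nu t$ and letting $t\to\infty$ produces \eqref{eq:MuD_rigid_motion_bound} with constant $C_{\mathcal R}/(4\nu^2)$.

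\textbf{Main obstacle.} The delicate step is justifying the rigid-component identity and the cancellation at the level of weak solutions satisfying only the energy inequality. Testing with a fixed $r\in\mathcal R_\Omega\subset V$ is allowed in the weak formulation, since $\mathcal R_\Omega$ is finite dimensional, so the ODE for $\Pi_{\mathcal R}u$ is rigorous. Subtracting its exact energy $\frac12\|\Pi_{\mathcal R}u\|^2$ from the energy inequality is also legitimate. The remaining point is the Helmholtz step $(g,u)=(\mbbP_Hg,u)$, which holds because $u\in H$. In 2D the energy equality holds anyway, which removes further doubt.
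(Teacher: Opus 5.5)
Your proposal is correct and follows essentially the same route as the paper. You show that $\Pi_{\mathcal R}u$ grows linearly by testing with $r\in\mathcal R_\Omega$, subtract its exact energy identity from the energy inequality, and absorb the remaining term $(g_\perp,(I-\Pi_{\mathcal R})u)$ using the projected Korn--Poincar\'e inequality and the same Young splitting $\nu\|D(u)\|_{\rL^2(\Omega)}^2+\frac{C_{\mathcal R}}{4\nu}\|g_\perp\|_{\rL^2(\Omega)}^2$. The only difference, which is harmless, is that you bound $\|(I-\Pi_{\mathcal R})u_0\|_{\rL^2(\Omega)}$ by $\|u_0\|_{\rL^2(\Omega)}$; that term disappears in the long-time limit.
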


\begin{proof}
Set
\[
u_{\mathcal R}:=\Pi_{\mathcal R}u,
\qquad
u_\perp:=(I-\Pi_{\mathcal R})u.
\]
For every $r\in\mathcal R_\Omega$, the matrix $\nabla r$ is antisymmetric.
Using $\nabla\cdot u=0$ and $u\cdot n=0$ on $\partial\Omega$, we obtain
\[
b(u,u,r)
=
-b(u,r,u)
=
-\int_\Omega u^T(\nabla r)u\,\d x
=
0.
\]
Moreover,
\[
a_0(u,r)=2\nu(D(u),D(r))_{\rL^2(\Omega)}=0.
\]
Testing the weak formulation of the reference equation by
$r\in\mathcal R_\Omega$ therefore gives
\[
\frac{\d}{\d t}(u(t),r)_{\rL^2(\Omega)}
=
(g,r)_{\rL^2(\Omega)}.
\]
Since $\mathcal R_\Omega$ is finite-dimensional, it follows that
\begin{equation}
u_{\mathcal R}(t)
=
\Pi_{\mathcal R}u_0
+
t\,\Pi_{\mathcal R}\mbbP_Hg.
\label{eq:rigid_component_evolution}
\end{equation}

In particular, the rigid component satisfies the exact energy identity
\[
\frac12\|u_{\mathcal R}(T)\|_{\rL^2(\Omega)}^2
=
\frac12\|u_{\mathcal R}(0)\|_{\rL^2(\Omega)}^2
+
\int_0^T(g,u_{\mathcal R}(t))_{\rL^2(\Omega)}\,\d t.
\]
Subtracting this identity from the energy inequality for $u$, and using the
orthogonal decomposition
$u=u_{\mathcal R}+u_\perp$, gives
\begin{equation}
\frac12\|u_\perp(T)\|_{\rL^2(\Omega)}^2
+
2\nu\int_0^T\|D(u(t))\|_{\rL^2(\Omega)}^2\,\d t
\leq
\frac12\|u_\perp(0)\|_{\rL^2(\Omega)}^2
+
\int_0^T(g_\perp,u_\perp(t))_{\rL^2(\Omega)}\,\d t.
\label{eq:projected_reference_energy}
\end{equation}
Here we have used
\[
D(u_\perp)=D(u),
\]
because $D(u_{\mathcal R})=0$.

Since $u_\perp(t)$ is orthogonal to $\mathcal R_\Omega$, the projected
Korn--Poincar\'e inequality gives
\[
\|u_\perp(t)\|_{\rL^2(\Omega)}
\leq
C_{\mathcal R}^{1/2}
\|D(u(t))\|_{\rL^2(\Omega)}.
\]
Consequently,
\[
(g_\perp,u_\perp)
\leq
C_{\mathcal R}^{1/2}
\|g_\perp\|_{\rL^2(\Omega)}
\|D(u)\|_{\rL^2(\Omega)}
\leq
\nu\|D(u)\|_{\rL^2(\Omega)}^2
+
\frac{C_{\mathcal R}}{4\nu}
\|g_\perp\|_{\rL^2(\Omega)}^2.
\]
Substituting this estimate into
\eqref{eq:projected_reference_energy}, dividing by $\nu T$, and discarding
the non-negative terminal term, we obtain
\[
\frac1T\int_0^T
\|D(u(t))\|_{\rL^2(\Omega)}^2\,\d t
\leq
\frac{\|u_\perp(0)\|_{\rL^2(\Omega)}^2}{2\nu T}
+
\frac{C_{\mathcal R}}{4\nu^2}
\|g_\perp\|_{\rL^2(\Omega)}^2.
\]
Passing to the limsup proves
\eqref{eq:MuD_rigid_motion_bound}. Condition
\eqref{eq:rigid_motion_synchronisation_condition} then implies
\[
\Theta_{\kappa,\nu}M_u^D<\kappa,
\]
and the conclusion follows from \autoref{thm:synchronisation}.
\end{proof}

\begin{rem}[Rigid and gradient forcing]
\label{rem:rigid_gradient_forcing}
If
\[
g=r+\nabla\phi,
\qquad
r\in\mathcal R_\Omega,
\quad
\phi\in\rH^1(\Omega),
\]
then $\mbbP_H\nabla\phi=0$ and $\mbbP_Hr=r$. Indeed, gradients are
$\rL^2$-orthogonal to $H$ because its elements are divergence-free and
have vanishing normal trace. Hence $\mbbP_Hg=r$ and $g_\perp=0$. Therefore
\[
M_u^D=0.
\]
The rigid component of the reference solution may grow linearly according to
\eqref{eq:rigid_component_evolution}, but its symmetric gradient vanishes. This also shows why the boundary feedback is essential for
synchronisation from arbitrary initial data in this regime. For
every non-zero $r\in\mathcal R_\Omega$,
\[
a_0(r,r)=0,
\]
so the natural viscous dissipation does not damp an error in the rigid
direction. On the other hand, $T_\Gamma r\neq0$, since a non-zero planar rigid
motion cannot vanish on a non-trivial boundary arc. Thus sufficiently resolved
boundary feedback detects and damps precisely the directions missed by the
un-nudged system.
\end{rem}

\appendix 
\section{Auxiliary analytical results}
\label{sec: appendix}
\subsection{Korn, trace, and rigid-motions}

This subsection collects the functional-analytic estimates used in the spectral-gap and synchronisation arguments. We first recall Korn's second inequality and the compactness of the tangential trace. We then prove Korn-Poincar\'e estimates obtained from a partial Dirichlet condition or from the exclusion of admissible rigid motions, and finally present the coercivity provided by positive boundary friction.

\begin{lem}[Korn and compact trace estimates]
\label{lem:korn_compact_trace}
There exists a constant $C_{\mathrm{Korn}}=C_{\mathrm{Korn}}(\Omega)>0$ such that
\begin{equation}
\|z\|_{\rH^1(\Omega)}^2
\leq C_{\mathrm{Korn}}
\left(
\|z\|_{\rL^2(\Omega)}^2
+
\|D(z)\|_{\rL^2(\Omega)}^2
\right)
\qquad\text{for all }z\in \rH^1(\Omega;\R^2).
\label{eq:korn_second_squared}
\end{equation}
Consequently, there is a constant $C_K=C_K(\Omega,\nu)>0$ such that
\begin{equation}
\| z \|_{\rH^1(\Omega)}^2 \leq C_K \left( \|z\|_{\rL^2(\Omega)}^2+ a_\alpha(z,z) \right)
\qquad  \text{for all } z \in V .
\label{eq: korn inequality}
\end{equation}
Moreover, the tangential trace map
\begin{equation*}
T_\Gamma : V \to \rL^2(\Gamma), \qquad T_\Gamma z = z_\tau|_\Gamma,
\end{equation*}
is compact.
\end{lem}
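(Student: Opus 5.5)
The statement has three parts, and I will prove them in order. The second Korn inequality \eqref{eq:korn_second_squared} is classical on bounded Lipschitz (here smooth) domains. I would quote it, for instance via Ne\v{c}as' lemma. The key identity is $\partial_j\partial_k z_i=\partial_j D_{ik}(z)+\partial_k D_{ij}(z)-\partial_i D_{jk}(z)$. It shows $\nabla z\in\rH^{-1}$ with $\|\nabla z\|_{\rH^{-1}}\lesssim\|D(z)\|_{\rL^2}$. Ne\v{c}as' inequality $\|f\|_{\rL^2}\lesssim\|f\|_{\rH^{-1}}+\|\nabla f\|_{\rH^{-1}}$, applied to $f=z$ and $f=\nabla z$, then gives $\|\nabla z\|_{\rL^2}\lesssim\|z\|_{\rL^2}+\|D(z)\|_{\rL^2}$. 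Squaring and adding $\|z\|_{\rL^2}^2$ yields \eqref{eq:korn_second_squared}. The constant depends only on $\Omega$, and in particular not on $\nu$ or $\alpha$. I regard this as the only genuinely deep input, so I will cite it rather than reprove it.

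For \eqref{eq: korn inequality}, I start from $\alpha\ge0$, which gives $a_\alpha(z,z)\ge 2\nu\|D(z)\|_{\rL^2}^2$. Hence $\|D(z)\|_{\rL^2}^2\le(2\nu)^{-1}a_\alpha(z,z)$. Substituting into \eqref{eq:korn_second_squared} gives
\[
\|z\|_{\rH^1(\Omega)}^2\le C_{\mathrm{Korn}}\max\{1,(2\nu)^{-1}\}\bigl(\|z\|_{\rL^2(\Omega)}^2+a_\alpha(z,z)\bigr),
\]
so I can take $C_K:=C_{\mathrm{Korn}}\max\{1,(2\nu)^{-1}\}$. This bound holds for all $z\in V\subset\rH^1$.

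For the compactness of $T_\Gamma$, I factor the map. The first factor is the trace operator $\gamma:\rH^1(\Omega;\R^2)\to\rH^{1/2}(\partial\Omega;\R^2)$, which is bounded on smooth domains. Next, taking the scalar product with the smooth tangent field $\tau$ and restricting to $\Gamma$ maps $\rH^{1/2}(\partial\Omega;\R^2)$ boundedly into $\rH^{1/2}(\partial\Omega)$, and then into $\rL^2(\Gamma)$ via the embedding $\rH^{1/2}(\partial\Omega)\hookrightarrow\rL^2(\partial\Omega)$. That embedding is compact by Rellich's theorem on the compact one-dimensional manifold $\partial\Omega$; alternatively one can see it through Fourier series on each boundary component. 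Restriction to $\Gamma$ is then bounded from $\rL^2(\partial\Omega)$ to $\rL^2(\Gamma)$.

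Since $T_\Gamma$ is a composition of bounded maps with one compact factor, it is compact. I would also record the consequence used repeatedly in \autoref{lem:continuous_boundary_penalization_limit} and \autoref{lem:discrete_gap_convergence}: weakly convergent sequences in $V$ have strongly convergent tangential traces on $\Gamma$.

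An alternative route to compactness is the interpolation trace inequality $\|z\|_{\rL^2(\partial\Omega)}^2\le C\|z\|_{\rL^2(\Omega)}\|z\|_{\rH^1(\Omega)}$ combined with the compact embedding $V\hookrightarrow\rL^2(\Omega)$. Either way, the only nontrivial point is citing Korn's second inequality correctly.
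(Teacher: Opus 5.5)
Your proposal is correct and follows the paper's proof: you cite Korn's second inequality and square it, you take $C_K=C_{\mathrm{Korn}}\max\{1,(2\nu)^{-1}\}$ from $a_\alpha(z,z)\geq 2\nu\|D(z)\|_{\rL^2(\Omega)}^2$, and you factor $T_\Gamma$ through the trace into $\rH^{1/2}(\partial\Omega)$ and the compact embedding $\rH^{1/2}(\partial\Omega)\hookrightarrow\rL^2(\partial\Omega)$; whether the tangential component is taken before or after the embedding makes no difference. There is one small slip in your optional Ne\v{c}as sketch: the identity bounds the second derivatives $\nabla^2 z$ in $\rH^{-1}$ by $\|D(z)\|_{\rL^2}$, whereas $\|\nabla z\|_{\rH^{-1}}$ is controlled by $\|z\|_{\rL^2}$, and with that correction the sketch is fine.
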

\begin{proof}
Korn's second inequality gives
\begin{equation}
\|z\|_{\rH^1(\Omega)} \leq
C( \| z \|_{ \rL^2( \Omega ) }+\|D(z)\|_{ \rL^2(\Omega)}),
\qquad z \in \rH^1 ( \Omega ; \R^2),
\label{eq: korn_second_inequality}
\end{equation}
see \cite[Chapter 1.2.2]{Oleinik1992}. Squaring and changing the constant gives
\eqref{eq:korn_second_squared}. Since $\nu>0$ and $\alpha\geq0$,
$$
2 \nu \| D ( z ) \|_{ \rL^2( \Omega ) }^2 \leq a_\alpha ( z , z),
$$
so \eqref{eq: korn inequality} follows, for instance with
$$
C_K=C_{\mathrm{Korn}}\max\left\{1,(2\nu)^{-1}\right\}.
$$

For the second part of the statement, the trace theorem gives a continuous map
$$
\rH^1( \Omega; \R^2) \to \rH^{1/2}( \partial \Omega; \R^2),
$$
and the embedding
$$
\rH^{ 1 / 2 } (\partial\Omega) \hookrightarrow \rL^2( \partial \Omega)
$$
is compact because $\partial\Omega$ is compact and one-dimensional.
Restricting the trace to the relatively open subset $\Gamma$ and taking the
tangential component are continuous operations from $\rL^2( \partial \Omega ; \R^2)$ to
$\rL^2(\Gamma)$. Hence $T_\Gamma : V\to \rL^2( \Gamma )$ is compact.
\end{proof}
\noindent 
For every $z\in V$, the boundary condition $z\cdot n=0$ already sets the
normal component to zero on $\partial\Omega$. If we also impose
$T_\Gamma z=0$, then the tangential component vanishes on $\Gamma$, and
therefore $z=0$ on this boundary part. Since a non-zero rigid motion cannot
vanish on a non-trivial boundary arc, the partial Dirichlet condition removes
the rigid-motion kernel and leads to the Korn-Poincar\'e inequality below.
\begin{lem}[Korn-Poincar\'e inequality with a partial Dirichlet condition]
\label{lem:korn_poincare_partial_dirichlet}
Set
$$
V_\Gamma:=\{z\in V:T_\Gamma z=0\}.
$$
Then there exists a constant $C_{\mathrm{KP},\Gamma} = C_{\mathrm{KP},\Gamma}(\Omega, \Gamma)>0$ such that
\begin{equation}
\|z\|_{\rH^1(\Omega)}^2 \leq C_{\mathrm{KP},\Gamma} \|D(z)\|_{\rL^2(\Omega)}^2
\qquad \text{for all }z \in V_\Gamma.
\label{eq:korn_poincare_partial_dirichlet}
\end{equation}
\end{lem}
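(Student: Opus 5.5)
I would argue by contradiction, combining Korn's second inequality \eqref{eq:korn_second_squared} with the compactness of $V\hookrightarrow \rL^2(\Omega)$ and of the trace $T_\Gamma$ from \autoref{lem:korn_compact_trace}. First note that $V_\Gamma$ is a closed subspace of $V$, since $T_\Gamma$ is continuous on $V$. Suppose \eqref{eq:korn_poincare_partial_dirichlet} fails. Then there is a sequence $z_j\in V_\Gamma$ with
$$
\|z_j\|_{\rH^1(\Omega)}=1,\qquad \|D(z_j)\|_{\rL^2(\Omega)}\leq \tfrac1j .
$$
After extracting a subsequence, $z_j\rightharpoonup z$ weakly in $V$, and since $V_\Gamma$ is weakly closed, $z\in V_\Gamma$. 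Moreover $z_j\to z$ strongly in $\rL^2(\Omega)$.

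Next I would upgrade this to strong $\rH^1$ convergence. Apply \eqref{eq:korn_second_squared} to $z_j-z_k$:
$$
\|z_j-z_k\|_{\rH^1}^2\leq C_{\mathrm{Korn}}\bigl(\|z_j-z_k\|_{\rL^2}^2+\|D(z_j)-D(z_k)\|_{\rL^2}^2\bigr)\to0 .
$$
So $(z_j)$ is Cauchy in $\rH^1$, and $z_j\to z$ in $\rH^1$. This gives $\|z\|_{\rH^1}=1$ and $D(z)=0$. By weak lower semicontinuity we also have $\|D(z)\|\le\liminf\|D(z_j)\|=0$.

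It then remains to show that $D(z)=0$ together with $z\in V_\Gamma$ forces $z=0$, which contradicts $\|z\|_{\rH^1}=1$.

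\begin{itemize}
\item \emph{Rigid motion.} The classical kernel characterisation of the symmetric gradient on a connected domain says that $D(z)=0$ in $\mathcal D'(\Omega)$ implies $z(x)=a+\omega x^\perp$ with $a\in\R^2$ and $\omega\in\R$. I would recall this via the identity $\partial_i\partial_j z_k=\partial_i D_{jk}+\partial_j D_{ik}-\partial_k D_{ij}=0$ in distributions, so $z$ is affine with antisymmetric gradient.
\item \emph{Vanishing on $\Gamma$.} Since $z\in V$, we have $z\cdot n=0$ on $\partial\Omega$. Since $T_\Gamma z=0$, the full trace of $z$ vanishes on $\Gamma$. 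As $z$ is smooth up to the boundary, this holds pointwise on $\Gamma$.
\item \emph{Conclusion.} If $\omega\neq0$, the affine map $x\mapsto a+\omega x^\perp$ vanishes only at the single point $x_0=\omega^{-1}a^\perp$ (with the appropriate sign convention). But $\Gamma$ is a non-empty relatively open subset of the smooth curve $\partial\Omega$, so it contains infinitely many points, a contradiction. Hence $\omega=0$, and then $a=0$, so $z=0$.
\end{itemize}

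The main obstacle is the rigidity step, that is, justifying that an $\rH^1$ field with $D(z)=0$ is a rigid motion. I would cite it as standard (for instance \cite{Oleinik1992}) rather than reprove it. The rest is a routine compactness argument. Note that the resulting constant depends only on $(\Omega,\Gamma)$, as claimed.
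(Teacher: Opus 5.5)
Your proposal is correct and follows essentially the same route as the paper. Both arguments take a normalised contradiction sequence, upgrade strong $\rL^2$ convergence to strong $\rH^1$ convergence by applying Korn's second inequality to $z_j-z_k$, and then rule out the limiting rigid motion, since a non-zero $a+\omega x^\perp$ vanishes at most at one point while $\Gamma$ contains a boundary arc. The differences are cosmetic: you get $z\in V_\Gamma$ from weak closedness rather than from continuity of the trace, and you sketch the rigidity lemma via $\partial_i\partial_j z_k=\partial_i D_{jk}+\partial_j D_{ik}-\partial_k D_{ij}$ instead of simply invoking it.
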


\begin{proof}
Since every $z\in V$ satisfies $z\cdot n=0$ on $\partial\Omega$, the
additional condition $T_\Gamma z=0$ implies that the full trace of $z$
vanishes on $\Gamma$.
Suppose, by contradiction, that
\eqref{eq:korn_poincare_partial_dirichlet} is false. Then there exists a
sequence $(z_j)\subset V_\Gamma$ such that
$$
\|z_j\|_{\rH^1(\Omega)}=1,
\qquad \|D(z_j)\|_{\rL^2(\Omega)}\to0.
$$
The sequence $(z_j)$ is bounded in $\rH^1(\Omega;\R^2)$. Hence, by the compact
embedding $\rH^1(\Omega)\hookrightarrow \rL^2(\Omega)$, after passing to a
subsequence we have
$$
z_j\rightharpoonup z
\quad\text{ in }\rH^1(\Omega;\mathbb{R}^2),
\qquad
z_j\to z
\quad \text{ in }\rL^2(\Omega;\mathbb{R}^2).
$$
Applying Korn's second inequality \eqref{eq: korn_second_inequality} to
$z_j-z_k$, we obtain
$$
\|z_j-z_k\|_{\rH^1(\Omega)}
\leq C\left(
\|z_j-z_k\|_{\rL^2(\Omega)} + \|D(z_j-z_k)\|_{\rL^2(\Omega)}
\right).
$$
The first term on the right-hand side tends to zero as $j,k\to\infty$ because
$(z_j)$ is Cauchy in $\rL^2(\Omega)$. Moreover,
$$
\|D(z_j-z_k)\|_{\rL^2(\Omega)}
\leq
\|D(z_j)\|_{\rL^2(\Omega)}
+
\|D(z_k)\|_{\rL^2(\Omega)}
\to0.
$$
Therefore $(z_j)$ is Cauchy in $\rH^1(\Omega;\R^2)$, and hence
$$
z_j\to z
\qquad
\text{ in }\rH^1(\Omega;\R^2).
$$
It follows that
$$
\|z \|_{\rH^1 (\Omega)}=1,
\qquad D(z)=0.
$$
By continuity of the trace operator, the full trace of $z$ also vanishes on
$\Gamma$.
Since $\Omega$ is connected and $D(z)=0$, the vector field $z$ is an
infinitesimal rigid motion. In two dimensions, it has the form
$$
z(x)=a+b x^\perp,
\qquad a\in \R^2,
\quad b \in \mathbb{R},
\quad x^\perp=(-x_2,x_1).
$$
The relatively open set $\Gamma$ contains a nontrivial boundary arc. Since
$z$ is continuous and its trace vanishes almost everywhere on $\Gamma$, it
vanishes on this entire arc. On the other hand, a nonzero rigid motion
$x \mapsto a+b x^\perp$ can vanish at most at one point. Therefore $z\equiv0$,
which contradicts $\|z\|_{\rH^1(\Omega)}=1$. This proves
\eqref{eq:korn_poincare_partial_dirichlet}.
\end{proof}
\noindent 
\begin{lem}[Korn--Poincar\'e inequality modulo rigid motions]
\label{lem:korn_poincare_modulo_rigid_motions}
Let $\mathcal R_\Omega$ be defined by
\eqref{eq:rigid_motion_space}, and let
$\Pi_{\mathcal R}$ be the $\rL^2(\Omega)$-orthogonal projection onto
$\mathcal R_\Omega$. Then there exists a constant
$C_{\mathcal R}=C_{\mathcal R}(\Omega)>0$ such that
\begin{equation}
\|z\|_{\rL^2(\Omega)}^2
+
\|\nabla z\|_{\rL^2(\Omega)}^2
\leq
C_{\mathcal R}\|D(z)\|_{\rL^2(\Omega)}^2
\label{eq:korn_poincare_modulo_rigid_motions}
\end{equation}
for every $z\in V$ satisfying $\Pi_{\mathcal R}z=0$.
\end{lem}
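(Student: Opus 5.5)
The plan is to argue by contradiction, using compactness in the same way as in the proof of \autoref{lem:korn_poincare_partial_dirichlet}. Suppose \eqref{eq:korn_poincare_modulo_rigid_motions} fails. Then there is a sequence $(z_j)\subset V$ such that
$$
\Pi_{\mathcal R}z_j=0,
\qquad
\|z_j\|_{\rL^2(\Omega)}^2+\|\nabla z_j\|_{\rL^2(\Omega)}^2=1,
\qquad
\|D(z_j)\|_{\rL^2(\Omega)}\to0.
$$
This sequence is bounded in $\rH^1(\Omega;\R^2)$. By the compact embedding $\rH^1(\Omega)\hookrightarrow\rL^2(\Omega)$ we extract a subsequence with $z_j\rightharpoonup z$ weakly in $\rH^1$ and $z_j\to z$ strongly in $\rL^2$.

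Next we upgrade to strong $\rH^1$ convergence. Apply Korn's second inequality \eqref{eq: korn_second_inequality} to $z_j-z_k$. The right-hand side is bounded by $\|z_j-z_k\|_{\rL^2}+\|D(z_j)\|_{\rL^2}+\|D(z_k)\|_{\rL^2}$, which tends to zero. Hence $(z_j)$ is Cauchy in $\rH^1$ and converges strongly to $z$. Since $V$ is closed in $\rH^1$, the limit satisfies $z\in V$. It also satisfies $\|z\|_{\rL^2}^2+\|\nabla z\|_{\rL^2}^2=1$ and $D(z)=0$.

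We now identify the limit. By definition \eqref{eq:rigid_motion_space}, the conditions $z\in V$ and $D(z)=0$ give $z\in\mathcal R_\Omega$. The orthogonal projection $\Pi_{\mathcal R}$ is continuous on $\rL^2$, and $\Pi_{\mathcal R}z_j=0$, so $\Pi_{\mathcal R}z=\lim_j\Pi_{\mathcal R}z_j=0$. A vector in $\mathcal R_\Omega$ with zero projection onto $\mathcal R_\Omega$ must vanish, so $z=0$. This contradicts the normalisation, which proves the inequality. The constant depends only on $\Omega$, since no quantity involving $\nu$ or $\alpha$ enters.

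Two technical points need checking. First, $\mathcal R_\Omega$ must be a closed subspace of $H$ so that $\Pi_{\mathcal R}$ is well defined. Since $\Omega$ is connected, elements of $\mathcal R_\Omega$ are rigid motions $a+bx^\perp$, so $\mathcal R_\Omega$ has dimension at most three and is closed. Second, we must apply Korn's second inequality to differences; this is legitimate because \eqref{eq: korn_second_inequality} holds on all of $\rH^1(\Omega;\R^2)$. Neither point is a real obstacle. The only substantive step is the strong-convergence upgrade, which is handled exactly as in \autoref{lem:korn_poincare_partial_dirichlet}.
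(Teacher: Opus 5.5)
Your proposal is correct and follows essentially the same argument as the paper. Both argue by contradiction with a normalised sequence, use compactness and Korn's second inequality on differences to get strong $\rH^1$ convergence, and conclude that the limit lies in $\mathcal R_\Omega\cap\mathcal R_\Omega^\perp=\{0\}$, contradicting the normalisation. Your remarks on the closedness of $V$ in $\rH^1$ and the finite dimensionality of $\mathcal R_\Omega$ make explicit two points the paper uses only implicitly.
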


\begin{proof}
Suppose that the estimate is false. Then there exists a sequence
$(z_j)\subset V$ such that
\[
\Pi_{\mathcal R}z_j=0,
\qquad
\|z_j\|_{\rH^1(\Omega)}=1,
\qquad
\|D(z_j)\|_{\rL^2(\Omega)}\to0.
\]
By compactness, after passing to a subsequence,
\[
z_j\to z
\quad\text{strongly in }\rL^2(\Omega),
\qquad
z_j\rightharpoonup z
\quad\text{weakly in }\rH^1(\Omega).
\]
Korn's second inequality, applied to $z_j-z_k$, shows as in the proof of
\autoref{lem:korn_poincare_partial_dirichlet} that
\[
z_j\to z
\qquad\text{strongly in }\rH^1(\Omega).
\]
Hence
\[
\|z\|_{\rH^1(\Omega)}=1,
\qquad
D(z)=0.
\]
Thus $z\in\mathcal R_\Omega$. On the other hand, the strong
$\rL^2$ convergence and $\Pi_{\mathcal R}z_j=0$ imply
$\Pi_{\mathcal R}z=0$. Therefore
\[
z\in\mathcal R_\Omega\cap\mathcal R_\Omega^\perp=\{0\},
\]
contradicting $\|z\|_{\rH^1(\Omega)}=1$.
\end{proof}
A different way to obtain Korn-Poincar\'e  coercivity is to assume directly that the domain admits no non-zero rigid motion satisfying the impermeability condition. In this case, the estimate holds on the whole space $V$, without imposing a partial Dirichlet condition.

\begin{lem}[Korn-Poincar\'e inequality without rigid motions]
\label{lem:korn_poincare_no_rigid_motions}
Assume \eqref{eq:no_tangential_rigid_motions}. Then there exists $C_\Omega > 0$ such that
\begin{equation}
\|z\|_{\rL^2(\Omega)}^2 + \| \nabla z \|_{ \rL^2(\Omega)}^2 \leq C_\Omega \| D( z ) \|_{\rL^2(\Omega)}^2 \qquad \text{for all } z \in V .
\label{eq:korn_poincare_from_no_rigid_motions}
\end{equation}
\end{lem}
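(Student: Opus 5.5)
We argue by contradiction with a compactness argument, exactly parallel to the proofs of \autoref{lem:korn_poincare_partial_dirichlet} and \autoref{lem:korn_poincare_modulo_rigid_motions}. Suppose \eqref{eq:korn_poincare_from_no_rigid_motions} fails for every constant. Then there is a sequence $(z_j)\subset V$ with
$$
\|z_j\|_{\rL^2(\Omega)}^2+\|\nabla z_j\|_{\rL^2(\Omega)}^2=1,
\qquad
\|D(z_j)\|_{\rL^2(\Omega)}\to0 .
$$
The left-hand side is the squared $\rH^1$ norm, so $(z_j)$ is bounded in $\rH^1(\Omega;\R^2)$. By Rellich compactness we extract a subsequence with $z_j\rightharpoonup z$ weakly in $\rH^1$ and $z_j\to z$ strongly in $\rL^2$. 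Since $V$ is a closed subspace of $\rH^1$, it is weakly closed, so $z\in V$ and in particular $z\cdot n=0$ on $\partial\Omega$.

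Next we upgrade to strong $\rH^1$ convergence using Korn's second inequality \eqref{eq: korn_second_inequality}, applied to $z_j-z_k$:
$$
\|z_j-z_k\|_{\rH^1}\le C\bigl(\|z_j-z_k\|_{\rL^2}+\|D(z_j)\|_{\rL^2}+\|D(z_k)\|_{\rL^2}\bigr)\to0 .
$$
Hence $(z_j)$ is Cauchy in $\rH^1$ and $z_j\to z$ in $\rH^1$. It follows that $\|z\|_{\rL^2}^2+\|\nabla z\|_{\rL^2}^2=1$ and $D(z)=0$.

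Since $\Omega$ is connected and $D(z)=0$ in the distributional sense, $z$ is an infinitesimal rigid motion, $z(x)=a+\omega x^\perp$ with $a\in\R^2$ and $\omega\in\R$. This is the step that requires a little care. The standard argument is that $\partial_i\partial_j z_k$ can be written as a combination of first derivatives of entries of $D(z)$, so all second derivatives vanish and $z$ is affine with antisymmetric gradient. The same classical fact was already used in \autoref{lem:korn_poincare_partial_dirichlet}. Because $z\in V$, its trace satisfies $z\cdot n=0$ on $\partial\Omega$, so $z$ belongs to the set in \eqref{eq:no_tangential_rigid_motions}. By assumption this set is $\{0\}$, hence $z=0$, contradicting the normalisation.

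The only potential obstacle is making sure the limit retains the impermeability condition. This follows from the weak closedness of $V$, or equivalently from continuity of the normal trace under $\rH^1$ convergence. With that in hand, the constant $C_\Omega$ exists, depending only on $\Omega$.
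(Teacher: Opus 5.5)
Your proposal is correct and follows essentially the same argument as the paper. Both use a normalised contradiction sequence, Rellich compactness, and Korn's second inequality applied to $z_j-z_k$ to upgrade to strong $\rH^1$ convergence, then identify the limit as a rigid motion in $V$ excluded by \eqref{eq:no_tangential_rigid_motions}. Your extra justification that $D(z)=0$ forces a rigid motion, and your weak-closedness argument for $z\in V$, are harmless refinements of the same route.
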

\begin{proof}
Suppose that \eqref{eq:korn_poincare_from_no_rigid_motions} fails. Then there
exists a sequence $z_j \in V$ such that
$$
\| z_j \|_{ \rL^2( \Omega)}^2 + \| \nabla z_j \|_{ \rL^2( \Omega) }^2 = 1,
\qquad \| D ( z_j ) \|_{ \rL^2 ( \Omega ) } \to 0 .
$$
In particular, $(z_j)$ is bounded in $ \rH^1 ( \Omega ; \mathbb{R}^2 ) $. By the
compact embedding $ \rH^1( \Omega) \hookrightarrow \rL^2(\Omega)$, after passing
to a subsequence, we have
$$
z_j \to z \quad\text{ in } \rL^2 ( \Omega ; \mathbb{R}^2), \qquad z_j  \rightharpoonup  z  \quad \text{ in } \rH^1( \Omega; \mathbb{R}^2).
$$
We now apply Korn's inequality \eqref{eq: korn_second_inequality} to $z_j-z_k$, and we obtain
$$
\| z_j-z_k \|_{\rH^1} \leq C (
\| z_j- z_k\|_{\rL^2} + \| D ( z_j ) - D( z_k ) \|_{\rL^2} ).
$$
The first term tends to zero because $z_j$ is Cauchy in $\rL^2$, and the second
term tends to zero because $ \| D(z_j) \|_{\rL^2} \to 0$. Hence $(z_j)$ is Cauchy in
$ \rH^1 $, and therefore
$$
z_j \to z \quad \text{strongly in } \rH^1( \Omega ; \mathbb{R}^2).
$$
Since $V$ is closed in $ \rH^1$, we have $z \in V$, and by strong convergence
$$
\| z \|_{ \rH^1 }=1, \qquad D(z)=0.
$$
Because $\Omega$ is connected, the condition $D(z)=0$ implies that $z$ is a
rigid motion, namely
$$
z(x)= a + \omega x^\perp
$$
for some $a \in \mathbb{R}^2$ and $\omega \in \mathbb{R}$. Since $z \in V$, its normal
trace satisfies $z \cdot n=0$ on $\partial \Omega$. Thus $z$ belongs to the class
of rigid motions excluded by \eqref{eq:no_tangential_rigid_motions}. Therefore
$z=0$, contradicting $ \| z \|_{ \rH^1}=1$.
\end{proof}
\noindent 
Positive boundary friction gives another way to remove the rigid-motion kernel. The symmetric-gradient term together with the tangential boundary term then controls both the $\rL^2$ norm and the full $\rH^1$ norm.
\begin{lem}[Coercivity with positive boundary friction]
\label{lem:positive_alpha_coercivity}
Assume that $\alpha>0$, and fix $\underline{\nu}>0$. Then there exist
constants $\lambda_{\alpha,\underline{\nu}}>0$ and $C_{\alpha,\underline{\nu}}>0$, depending only on
$\Omega$, $\alpha$, and $\underline\nu$, such that
\begin{equation}
2\underline{\nu} \| D(z) \|_{\rL^2(\Omega)}^2
+ \alpha \| z_\tau \|_{\rL^2(\partial\Omega)}^2
\geq \lambda_{\alpha,\underline{\nu}} \| z \|_{\rL^2(\Omega)}^2 \qquad \text{for all } z \in V,
\label{eq:positive_alpha_spectral_gap}
\end{equation}
and
\begin{equation}
\|z\|_{\rH^1(\Omega)}^2 \leq C_{\alpha,\underline\nu} \left(
2 \underline{\nu} \| D(z)\|_{\rL^2( \Omega)}^2
+ \alpha \| z_\tau \|_{\rL^2( \partial \Omega)}^2 \right) \qquad\text{for all }z\in V.
\label{eq:positive_alpha_h1_coercivity}
\end{equation}
\end{lem}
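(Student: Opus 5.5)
The plan is to prove \eqref{eq:positive_alpha_spectral_gap} by the same contradiction-compactness argument used in \autoref{lem:korn_poincare_no_rigid_motions}, and then to deduce \eqref{eq:positive_alpha_h1_coercivity} from it together with Korn's second inequality \eqref{eq:korn_second_squared}. Throughout, write
$$
b_{\underline\nu}(z):=2\underline\nu\|D(z)\|_{\rL^2(\Omega)}^2+\alpha\|z_\tau\|_{\rL^2(\partial\Omega)}^2 .
$$
The right way to organise the argument is to prove the stronger $\rH^1$ bound \eqref{eq:positive_alpha_h1_coercivity} first, by contradiction. Since $\|z\|_{\rL^2}\le\|z\|_{\rH^1}$, it then implies \eqref{eq:positive_alpha_spectral_gap} with $\lambda_{\alpha,\underline\nu}=C_{\alpha,\underline\nu}^{-1}$. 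Both constants visibly depend only on $\Omega,\alpha,\underline\nu$, because $\nu$ does not enter $b_{\underline\nu}$.

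Suppose \eqref{eq:positive_alpha_h1_coercivity} fails for every constant. Then there is a sequence $(z_j)\subset V$ with
$$
\|z_j\|_{\rH^1(\Omega)}=1, \qquad b_{\underline\nu}(z_j)\to0,
$$
so $\|D(z_j)\|_{\rL^2}\to0$ and $\|(z_j)_\tau\|_{\rL^2(\partial\Omega)}\to0$. The argument then proceeds in three steps.

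\begin{enumerate}
\item By the compact embedding $\rH^1\hookrightarrow\rL^2$, a subsequence converges strongly in $\rL^2$ and weakly in $\rH^1$ to some $z$.
\item Applying Korn's second inequality \eqref{eq: korn_second_inequality} to $z_j-z_k$, exactly as in the proof of \autoref{lem:korn_poincare_partial_dirichlet}, shows that $(z_j)$ is Cauchy in $\rH^1$. Hence $z_j\to z$ strongly in $\rH^1$, with $z\in V$ (since $V$ is closed), $\|z\|_{\rH^1}=1$ and $D(z)=0$.
\item Continuity of the trace $\rH^1(\Omega)\to\rL^2(\partial\Omega)$, or the compactness from \autoref{lem:korn_compact_trace}, gives $z_\tau=0$ in $\rL^2(\partial\Omega)$. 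Since also $z\cdot n=0$, the full trace of $z$ vanishes on $\partial\Omega$.
\end{enumerate}

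It remains to show that such a $z$ must be zero. Because $\Omega$ is connected and $D(z)=0$, $z$ is an infinitesimal rigid motion $z(x)=a+bx^\perp$. This is a continuous field, and a nonzero one vanishes at most at a single point. Its trace vanishes on all of $\partial\Omega$, which is a union of closed curves and therefore contains infinitely many points, so $z\equiv0$. This contradicts $\|z\|_{\rH^1}=1$, which proves \eqref{eq:positive_alpha_h1_coercivity}, and \eqref{eq:positive_alpha_spectral_gap} follows as noted above.

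The only delicate point is the last one: identifying the limit as a rigid motion and excluding it via the vanishing tangential trace. Here one uses that $\alpha>0$, so that $b_{\underline\nu}(z_j)\to0$ really forces $\|(z_j)_\tau\|_{\rL^2(\partial\Omega)}\to0$, and that a trace vanishing almost everywhere forces pointwise vanishing along the boundary curve, because the limit is a smooth rigid field. This mirrors the argument already given in \autoref{lem:korn_poincare_partial_dirichlet} with $\Gamma$ replaced by all of $\partial\Omega$. The rest is routine.
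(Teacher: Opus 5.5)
Your proof is correct and follows essentially the paper's compactness--rigidity argument: Korn's second inequality, a limit with $D(z)=0$ and vanishing full trace, and the fact that a non-zero rigid motion vanishes at most at one point. The only difference is the order of the two estimates. The paper normalises in $\rL^2$, proves the $\rL^2$ gap first using only weak $\rH^1$ convergence, and then gets the $\rH^1$ bound from Korn with $C_{\alpha,\underline\nu}\sim C_{\mathrm{Korn}}(\lambda_{\alpha,\underline\nu}^{-1}+(2\underline\nu)^{-1})$. You instead normalise in $\rH^1$, upgrade to strong $\rH^1$ convergence via Korn applied to $z_j-z_k$ (which you correctly need in order to keep $\|z\|_{\rH^1}=1$ in the limit), and then read off the $\rL^2$ gap with $\lambda_{\alpha,\underline\nu}=C_{\alpha,\underline\nu}^{-1}$.
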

\begin{proof}
We first prove \eqref{eq:positive_alpha_spectral_gap}. Set
$$
Q(z) : = 2\underline \nu \|D(z) \|_{\rL^2 ( \Omega ) }^2 +  \alpha \| z_\tau \|_{ \rL^2 (\partial \Omega ) }^2,
\qquad z \in V .
$$
Since $\underline{\nu}>0$ and $\alpha>0$, if $Q(z)=0$ then
$$
D(z)=0 \quad \text{in } \Omega, \qquad z_\tau = 0 \quad \text{on } \partial \Omega .
$$
Moreover, because $z \in V$, we also have $z\cdot n=0$ on $\partial \Omega$ in the
trace sense. Hence $z=0$ on $\partial \Omega$. On the other hand, $D(z)=0$ in a
connected domain implies that $z$ is a rigid motion, namely
$$
z(x) = a  + \omega x^\perp
$$
for some $a \in \mathbb{R}^2$ and $\omega \in \mathbb{R}$. This rigid motion can
vanish on the whole boundary only if $ a = 0 $ and $\omega=0$. Therefore the kernel
of $Q$ in $V$ is trivial.

We now turn this kernel statement into a quantitative estimate. Suppose, by
contradiction, that \eqref{eq:positive_alpha_spectral_gap} is false. Then there
is a sequence $(z_j)_j \subset V$ such that
$$
\| z_j \|_{\rL^2( \Omega ) } = 1, \qquad Q ( z_j ) \to 0 .
$$
In particular, $D(z_j) \to 0$ in $\rL^2(\Omega)$ and
$z_{j,\tau} \to 0$ in $\rL^2(\partial \Omega)$. By Korn's second inequality
\eqref{eq: korn_second_inequality}, it holds
$$
\| z_j \|_{ \rH^1 ( \Omega ) } \leq C ( \| z_j \|_{ \rL^2( \Omega ) } + \| D ( z_j ) \|_{ \rL^2 ( \Omega ) }),
$$
so $(z_j)_j$ is bounded in $\rH^1(\Omega; \mathbb{R}^2)$. After passing to a
subsequence, using the compact embedding
$\rH^1(\Omega) \hookrightarrow \rL^2(\Omega)$, we can assume that
$$
z_j \to z \quad \text{ in }\rL^2(\Omega;\mathbb{R}^2), \qquad
z_j \rightharpoonup z \quad \text{ in } \rH^1( \Omega ; \mathbb{R}^2).
$$
The space $V$ is closed in $\rH^1$, hence $z \in V$. Passing to the limit in
the symmetric gradients gives $D(z)=0$. By continuity of the trace operator,
the weak convergence in $\rH^1(\Omega;\R^2)$ implies
$
z_{j,\tau}\rightharpoonup z_\tau$ in $\rH^{1/2}(\partial\Omega).
$
On the other hand,
$
z_{j,\tau}\to 0$ in $\rL^2(\partial\Omega).$ The uniqueness of the weak limit therefore gives
$z_\tau=0$ on $\partial \Omega$. Since $z \in V$, we also have $z \cdot n =0$ on
$\partial \Omega$. Thus $z$ belongs to the kernel of $Q$ in $V$, and the previous
paragraph gives $z=0$. This contradicts the strong $\rL^2$ convergence and the
normalisation $ \| z_j \|_{\rL^2(\Omega)}=1$. Therefore
\eqref{eq:positive_alpha_spectral_gap} holds for some
$\lambda_{\alpha,\underline\nu}>0$.

We now prove \eqref{eq:positive_alpha_h1_coercivity}. Combining Korn's second
inequality \eqref{eq: korn_second_inequality} with \eqref{eq:positive_alpha_spectral_gap}, we get
$$
\| z \|_{ \rH^1(\Omega ) }^2 \leq C ( \|z \|_{ \rL^2(\Omega) }^2+ \| D ( z ) \|_{ \rL^2(\Omega)}^2)
\leq C ( \lambda_{\alpha, \underline{\nu}}^{-1}
+( 2 \underline{\nu})^{-1}) Q(z)  .
$$
This is \eqref{eq:positive_alpha_h1_coercivity}, after renaming the constant on the right-hand side.
\end{proof}
\subsection{Well-posedness details}
\label{appendix: well posedness}
\noindent 
This subsection provides the proofs of the well-posedness results stated in \autoref{subsec: well posedness reference and DA system} for the reference Navier-Stokes system \eqref{eq: 2D nse} and for the boundary-feedback data-assimilation system \eqref{eq: da_model}. We first recall the standard two-dimensional argument for the reference problem and derive its energy estimate.

\begin{proof}[Proof of \autoref{lem: well_posedness_reference}]
The case $g=0$ is exactly the two-dimensional Navier-Stokes system investigated in \cite{Kelliher2006}, see \cite[Theorem 6.1]{Kelliher2006} for the well-posedness result. The inclusion of the forcing term $g\in \rL^2(\Omega; \mathbb{R}^2)$ follows
by the same Galerkin construction. Indeed, $g$ defines an element of
$\rL^2(0,T;V')$, and the Galerkin approximations satisfy the same compactness
bounds as in the unforced case, with the only additional term $(g,u^m)$ in the
energy estimate, where $u^m$ is a Galerkin approximant. 

In the following, we just formally show how to obtain the energy inequality \eqref{eq: energy inequality 2DNSE}. For a
smooth solution $u$, testing \eqref{eq: 2D nse}$_1$ by $u$
gives
$$
\frac{1}{2}\frac{\d}{\d t} \| u(t) \|_{\rL^2(\Omega)}^2 +(- \nu \Delta u(t) , u(t))_{ \rL^2(\Omega) } +b( u(t), u(t) , u(t)) =(g,u(t))_{\rL^2(\Omega)} .
$$
Since $\nabla \cdot u=0$ and $u \cdot n=0$ on $\partial \Omega$, we have
$b(u,u,u)=0$. Moreover, using
$-\nu \Delta u=-\div (2\nu D(u))$ and the Navier boundary condition in \eqref{eq: 2D nse}$_3$-\eqref{eq: 2D nse}$_4$, we obtain
\begin{equation*}
\begin{aligned}
(-\nu\Delta u,u)_{\rL^2(\Omega)}
&=
2\nu\int_\Omega |D(u)|^2 \d x
-\int_{\partial\Omega} (2\nu D(u)n)\cdot u \d\sigma  \\
&=
2\nu\int_\Omega |D(u)|^2\,\d x
+\alpha\int_{\partial\Omega}|(u)_\tau|^2 \d\sigma
=a_\alpha(u,u).
\end{aligned}
\end{equation*}
Therefore, at a formal level, we have
$$
\frac{1}{2}\frac{\d}{\d t}\| u  (t ) \|_{\rL^2(\Omega)}^2 +a_\alpha(u(t),u(t))
=(g,u(t))_{\rL^2(\Omega)} .
$$
Integrating in time gives
$$
\frac{1}{2} \| u(t) \|_{\rL^2(\Omega)}^2
+\int_0^t a_\alpha(u(s),u(s)) \d s = \frac{1}{2} \| u_0 \|_{\rL^2(\Omega)}^2
+\int_0^t (g,u(s))_{\rL^2(\Omega)} \d s .
$$
The energy identity holds at the Galerkin level. Passing to the weak limit and
using lower semicontinuity, the forcing term converges
by weak convergence, while the two non-negative quadratic terms on the left-hand
side are weakly lower semi-continuous. Hence the identity at the approximate
level gives the energy inequality \eqref{eq: energy inequality 2DNSE} for the weak limit.
\end{proof}
\noindent 
The same variational framework adopted in the previous proof also applies to the assimilated system \eqref{eq: da_model} because the feedback term defines a bounded, symmetric, and non-negative bilinear form on $V$.

\begin{proof}[Proof of \autoref{lem:wp_da_system}]
We write the problem in the abstract form
$$
v' +A_{ \mu , \delta} v + B( v , v) = F_\delta \qquad \text{in }  V',
$$
where
$$
\langle A_{ \mu, \delta} z , \phi \rangle_{ V', V} : = a_{ \mu, \delta}( z , \phi),
\qquad \langle B( z, z), \phi \rangle_{ V ' , V } :  = b( z , z , \phi),
$$
and
$$
\langle F_\delta ( t ), \phi \rangle_{ V' , V} : = ( g , \phi)_{\rL^2(\Omega) } + \mu \langle y_\delta( t ) , P_\delta T_\Gamma \phi \rangle_{ Y_\delta}.
$$
Since $T_\Gamma : V\to \rL^2(\Gamma)$ and
$P_\delta:\rL^2(\Gamma)\to Y_\delta$ are bounded, the feedback term
$$
(z,\phi)\mapsto \mu \langle P_\delta T_\Gamma z, P_\delta T_\Gamma \phi \rangle_{Y_\delta}
$$
is bounded, symmetric and non-negative on $V\times V$. From \eqref{eq: korn inequality}, it follows that there exists $c_0 > 0$ such that
$$
a_\alpha( z, z ) +  \| z \|_{ \rL^2( \Omega) }^2 \geq c_0 \| z \|_V^2, \qquad z \in V.
$$
Since the feedback term is non-negative, the same estimate holds with
$a_{\mu,\delta}$ in place of $a_\alpha$, i.e.
\begin{equation}
    a_{\mu,\delta} (z,z) + \| z \|_{\rL^2(\Omega)}^2 \geq c_0 \| z \|_{V}^2
    \label{eq:a_mu_delta korn}
\end{equation}
Moreover,
$$
| \langle F_\delta(t) , \phi\rangle_{V',V}|
\leq C ( \| g \|_{\rL^2(\Omega)} + \mu\| y_\delta ( t ) \|_{ Y_\delta} ) \| \phi \|_V,
$$
so $F_\delta \in \rL^2(0,T;V')$ because
$y_\delta \in \rL^2(0,T ; Y_\delta)$. The existence part then follows by the standard Galerkin method for the
two-dimensional NSEs (see for instance \cite{Lions1969} or \cite[Chapter III]{Temam1979}), adapted
to Navier boundary conditions as in \cite[Theorem 6.1]{Kelliher2006}. It gives a weak solution
$$
v \in \rL^\infty(0,T;H) \cap \rL^2(0,T;V) \cap \rC([0,T]; H), \qquad v'\in \rL^{4/3}(0,T;V').
$$

It remains to prove uniqueness. Let $v_1,v_2$ be two solutions with the same
initial datum and set $z:=v_1-v_2$. Then
$$
\frac{1}{2} \frac{ \d }{ \d t} \| z ( t ) \|_{ \rL^2( \Omega) }^2 +a_{ \mu, \delta}( z ( t ), z ( t ) ) =-b( z (t ), v_1( t) ,z ( t)),
$$
because $b( v_2,z, z)=0$. By Ladyzhenskaya's inequality and Young's inequality
$$
|b(z,v_1,z)| \leq C\| v_1 \|_V \| z \|_{ \rL^2( \Omega) }\| z \|_V \leq \frac{ c_0 }{ 2 } \| z \|_V^2 +C \| v_1 \|_V^2 \| z \|_{ \rL^2( \Omega ) }^2   .
$$
Using the estimate \eqref{eq:a_mu_delta korn}, we obtain
$$
\frac{1}{2} \frac{ \d }{ \d t } \| z( t ) \|_{ \rL^2( \Omega ) }^2 +\frac{c_0}{2} \| z ( t ) \|_V^2
\leq C (1 + \| v_1 ( t) \|_V^2 ) \| z ( t) \|_{ \rL^2( \Omega)}^2 .
$$
In particular,
$$
\frac{ \d }{\d t} \| z(t) \|_{\rL^2( \Omega)}^2
\leq C (1+ \| v_1(t) \|_V^2 ) \| z( t )\|_{\rL^2(\Omega)}^2 .
$$
Since $v_1\in\rL^2(0,T;V)$ and $z(0)=0$, Gronwall's lemma gives $z=0$.
Thus the weak solution is unique.
\end{proof}

\subsection{A Gronwall lemma with averaged damping}
We collect here an auxiliary lemma used in the main arguments. It is a consequence of Gronwall's inequality, which converts
positivity of the damping coefficient in long-time average into uniform
exponential decay. 
\begin{lem}
\label{lem: modified gronwall}
Let $E \colon [0,\infty) \to [0 , \infty )$ be absolutely continuous and suppose
that, for some constants $a>0$, $b>0$, and some
$q\in \rL^1_{\mathrm{loc}}(0,\infty)$, one has
\begin{equation}
\frac{\d}{\d t}E(t)+ (a-bq(t))E(t) \leq 0
\label{eq:abstract_averaged_damping_ineq}
\end{equation}
in the sense of distributions on $(0,\infty)$. If
\begin{equation}
b \limsup_{t\to\infty} \frac{1}{t} \int_0^t q(s) \d s < a,
\label{eq: abstract averaged damping hp}
\end{equation}
then there exist constants $C, \gamma >0 $ such that
\begin{equation}
E(t) \leq C e^{-\gamma t} E(0)
\qquad \text{for all } t \geq0.
\label{eq:abstract_averaged_damping_conclusion}
\end{equation}
\end{lem}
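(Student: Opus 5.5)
The plan is to integrate the differential inequality with an integrating factor and then use the averaged hypothesis to bound the exponent linearly in $t$, absorbing the transient into the constant $C$. Set
$$
\Lambda(t):=\int_0^t \bigl(a-bq(s)\bigr)\d s .
$$
Since $E$ is absolutely continuous and $q\in\rL^1_{\mathrm{loc}}$, the function $t\mapsto E(t)e^{\Lambda(t)}$ is absolutely continuous, with derivative $\bigl(E'+(a-bq)E\bigr)e^{\Lambda}\le 0$ a.e. by \eqref{eq:abstract_averaged_damping_ineq}. The product is therefore nonincreasing, which gives
$$
E(t)\le E(0)\exp\Bigl(-at+b\int_0^t q(s)\d s\Bigr)\qquad\text{for all }t\ge0 .
$$

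Next choose $\gamma$ with $0<\gamma<a-bM$, where $M:=\limsup_{t\to\infty}\frac1t\int_0^t q\,\d s$; the interval is non-empty by \eqref{eq: abstract averaged damping hp}. Pick $\varepsilon>0$ with $b(M+\varepsilon)\le a-\gamma$. By the definition of the limsup there is $t_0>0$ such that
$$
\int_0^t q\,\d s\le (M+\varepsilon)t\qquad\text{for } t\ge t_0 .
$$
For such $t$ the exponent is at most $-at+b(M+\varepsilon)t\le-\gamma t$.

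For $t\in[0,t_0]$ we bound $-at+b\int_0^tq\,\d s\le b\int_0^{t_0}|q|\,\d s$, which is finite by local integrability. Hence
$$
C:=\exp\Bigl(\gamma t_0+b\int_0^{t_0}|q|\,\d s\Bigr)
$$
works uniformly for all $t\ge0$. This gives \eqref{eq:abstract_averaged_damping_conclusion}, with any $\gamma<a-bM$, matching the range $2\gamma<\kappa-\Theta M_u^D$ used in \autoref{thm:synchronisation} for $E=\|w\|^2$.

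The only delicate point is justifying the monotonicity of $Ee^{\Lambda}$ when the inequality holds only in the distributional sense. I would handle it by noting that for absolutely continuous $E$ the distributional derivative coincides with the a.e. derivative, so the inequality holds a.e., and the product rule for absolutely continuous functions applies. If $q$ is merely $\rL^1_{\mathrm{loc}}(0,\infty)$ rather than integrable up to $0$, I would run the argument on $[s,t]$ and let $s\downarrow0$, using the continuity of $E$. In the application $q=\|D(u)\|^2\in\rL^1(0,T)$, so this issue does not arise there.
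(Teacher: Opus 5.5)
Your proposal is correct and follows essentially the same route as the paper: integrate the inequality to get $E(t)\le E(0)\exp\bigl(-at+b\int_0^t q\,\d s\bigr)$, use the definition of the limsup to bound the exponent by $-\gamma t$ for $t\ge t_0$, and absorb the interval $[0,t_0]$ into the constant. Your explicit constant $C=\exp\bigl(\gamma t_0+b\int_0^{t_0}|q|\,\d s\bigr)$ and your justification of the integrating-factor step in the distributional setting spell out what the paper leaves implicit.
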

\begin{proof}
Choose $\varepsilon > 0 $ so small that
$$
\gamma_0:= a-b\left( \limsup_{t \to \infty} \frac{1}{t} \int_0^t q(s) \d s + \varepsilon \right)>0 .
$$
By the definition of the limsup, there exists $t_\varepsilon>0$ such that
$$
\int_0^t q(s) \d s
\leq \left( \limsup_{\tau \to \infty} \frac{1}{\tau} \int_0^\tau q(s) \d s +\varepsilon \right) t
\qquad \text{for all } t \geq t_\varepsilon .
$$
Integrating \eqref{eq:abstract_averaged_damping_ineq}, we obtain
$$
E(t) \leq E(0) \exp \left(
-a t + b\int_0^t q(s) \d s
\right).
$$
Hence $E(t) \leq E(0)e^{-\gamma_0 t}$ for $t \geq t_\varepsilon $.
Enlarging the multiplicative constant to cover the finite interval
$[0,t_\varepsilon]$ gives \eqref{eq:abstract_averaged_damping_conclusion}.
\end{proof}

{\bf Acknowledgements. }{\small Gianmarco Del Sarto acknowledges the support from the DFG project FOR~5528. Buddhika Priyasad's research was partially supported by the Young Scholar Fund offered by University of Konstanz under the project number: FP 503/26.}


\end{document}